\newtheorem{theorem}{Theorem}
\numberwithin{theorem}{section}
\newtheorem{corollary}[theorem]{Corollary}
\newtheorem{lemma}[theorem]{Lemma}
\newtheorem{proposition}[theorem]{Proposition}
\newtheorem{question}[theorem]{Question}
\theoremstyle{remark}
\newtheorem{remark}{Remark}
\theoremstyle{definition}
\newtheorem{example}{Example}[section]
\newcommand{\R}{\mathbb{R}}
\newcommand{\Z}{\mathbb{Z}}
\newcommand{\N}{\mathbb{N}}
\newcommand{\acts}{\curvearrowright}
\newcommand{\Isom}{\mathrm{Isom}}
\newcommand{\FSet}{\mathcal{F}(G)}
\newcommand{\Part}{\mathcal{P}}
\newcommand{\Maps}{\mathcal{M}}
\newcommand{\kernel}{N}
\newcommand{\NMaps}{\mathcal{N}}
\newcommand{\AdMaps}{\Maps_\oplus}
\newcommand{\Cob}{\mathcal{L}}
\newcommand{\wCob}{{\overline{\Cob}}}
\newcommand{\nmap}{\xi}
\newcommand{\witness}{\mathcal{R}_\varphi}
\newcommand{\Lp}{\mathcal{L}}
\newcommand{\koopman}{\kappa}
\tikzset{
  negate/.style={
    decoration={
      markings,
      mark= at position 0.5 with {
        \node[transform shape] (tempnode) {$/$};
      },
    },
    postaction={decorate},
  },
}
\newcommand{\vertiii}[1]{{\left\vert\kern-0.25ex\left\vert\kern-0.25ex\left\vert #1 
    \right\vert\kern-0.25ex\right\vert\kern-0.25ex\right\vert}}
\newcommand{\vertG}[1]{{\left\vert\kern-0.25ex\left\vert\kern-0.25ex\left\vert #1 
    \right\vert\kern-0.25ex\right\vert\kern-0.25ex\right\vert}_G}
\newcommand{\vertsup}[1]{{\left\vert\kern-0.25ex\left\vert\kern-0.25ex\left\vert #1 
    \right\vert\kern-0.25ex\right\vert\kern-0.25ex\right\vert}_\infty}
\newcommand{\vertK}[1]{{\left\vert\kern-0.25ex\left\vert\kern-0.25ex\left\vert #1 
    \right\vert\kern-0.25ex\right\vert\kern-0.25ex\right\vert}_\NMaps}
\title[Ergodic theorems for set maps under weak forms of additivity]{Ergodic theorems for set maps\\under weak forms of additivity}
\author{Raimundo Briceño}
\author{Godofredo Iommi}
\address{Facultad de Matem\'aticas, Pontificia Universidad Cat\'olica de Chile. Santiago, Chile}
\email{raimundo.briceno@uc.cl}
\urladdr{http://www.mat.uc.cl/~raimundo.briceno}
\email{godofredo.iommi@gmail.com}
\urladdr{http://www.mat.uc.cl/~giommi}
\subjclass[2010]{Primary 37A15, 46B42, 37A30; Secondary 37B02, 37A60, 37C85.}
\date{}
\keywords{Countable amenable group; non-additive set map; ergodic theorems}
\begin{document}

\begin{abstract}
We investigate various relaxations of additivity for set maps into Banach spaces in the context of representations of amenable groups. Specifically, we establish conditions under which asymptotically additive and almost additive set maps are equivalent. For Banach lattices, we further show that these notions are related to a third weak form of additivity adapted to the order structure of the space. By utilizing these equivalences and reducing non-additive settings to the additive one by finding suitable additive realizations, we derive new non-additive ergodic theorems for amenable group representations into Banach spaces and streamline proofs of existing results in certain cases.
\end{abstract}

\maketitle
\setcounter{tocdepth}{2}
\tableofcontents

\section*{Introduction}

A central theme in additive ergodic theory is the study of the asymptotic behavior of Birkhoff averages, which can be viewed as additive sequences of functions. Specifically, given a dynamical system $T \colon X \to X$ and a function $f \colon X \to \mathbb{R}$, we define the Birkhoff sums $S_nf(x) = \sum_{i=0}^{n-1} f(T^i x)$. If we take $f_n = S_n f$, these sums satisfy the additive property $f_{n+m}(x) = f_n(x) + f_m(T^n x)$. However, this classical framework does not address many problems that arise naturally in dynamical systems, probability theory, and statistical mechanics. For example, the study of the growth of norms of matrix products \cite{furstenberg1960products, viana}, which is closely related to Lyapunov exponents and the hyperbolicity of dynamical systems in higher-dimensional phase spaces, falls outside its scope. Similarly, in percolation theory, the objects of interest often lack additive properties \cite{hammersley1965first, kingman1968}.

\emph{Non-additive ergodic theory} generalizes classical ergodic theory to handle these cases. A weak form of additivity is typically required to establish ergodic theorems and develop the theory. One of the earliest such generalizations is \emph{sub-additivity}. A sequence of functions $(f_n)_n$ in $L^1_\mu(X)$, where $T$ preserves a probability measure $\mu$, is sub-additive if for every $n, m \in \mathbb{N}$ and $\mu$-almost every $x \in X$,  
\[
f_{n+m}(x) \leq f_n(x) + f_m(T^n x).
\]  
In 1968, Kingman \cite{kingman1968} proved a pointwise ergodic theorem for such sequences, providing tools to study maximal Lyapunov exponents as introduced in \cite{furstenberg1960products}. Sub-additive ergodic theory has since been extensively developed \cite{barreira1, cao, falconer1988, kaenmaki}.

Questions of this nature also arise in the context of spatial stochastic processes, where the phase space is $X = \Omega^G$, with $\Omega$ denoting a set of states and $G$ an index group. Here, instead of sequences of functions, it is natural to consider \emph{set maps} $\varphi \colon \mathcal{F}(G) \to L^1_\mu(X, \mathbb{R})$, where $\mathcal{F}(G)$ is the collection of non-empty finite subsets of $G$. Such maps, originally used to describe quantities like entropy, volume, or energy, admit a notion of sub-additivity, allowing for the extension of sub-additive ergodic theory to this setting \cite{akcoglu1981, dooley2014sub, nguyen1979ergodic2, smythe1976multiparameter}.

In recent years, other weak notions of additivity have been proposed. For example, Barreira \cite{barreira1} studied sequences of real valued functions for which there exists $C > 0$ such that for every $m,n \in \N$,
$$
\left|f_{n+m}(x) -  f_n(x) - f_{m}(T^n x)\right| \leq C   \quad   \text{ for every } x \in X.
$$
He called this condition ``almost additivity,'' however in order to differentiate it from a more general notion we call it \emph{almost additive with constant error}. 
The study of such sequences is motivated by their relevance in the dimension theory of non-conformal dynamical systems. Their ergodic theory has been systematically developed and is now well-established \cite{bkm, barreira2, barreira1, iommi-yuki, mummert, zhao2011asymptotically}. In the setting of spatial stochastic processes, Pogorzelski \cite{pogorzelski2013almost} extended the concept of almost additivity to set maps, demonstrating its utility in studying percolation models on Cayley graphs of finitely generated amenable groups. Actually, Pogorzelski and  Schwarzenberger \cite[Lemma 8.3]{pogo-sch} proved that the function that counts clusters in a given set, up to a certain size, is almost additive in that sense.

In \cite[\S 2]{bricenoiommi1}, we introduced a complete semi-normed space of set maps, which provides a unified framework for defining additive properties and proving fundamental results (see \S \ref{sec:space-maps} for details). For a countable discrete amenable group $G$, a Banach space (or more generally, a complete semi-normed space) $(V, \|\cdot\|)$, and a uniformly bounded representation $\pi \colon G \to \mathrm{Isom}(V)$, we defined the complete semi-normed space $(\Maps, \vertsup{\cdot})$ of bounded $G$-equivariant set maps. The subset of \emph{additive set maps} $\AdMaps$ forms a closed subspace of $\Maps$. Using this framework, we say that a set map $\varphi \in \Maps$ is \emph{almost additive} if there exists $b\colon \FSet \to \R$,  with  $\vertG{b} =0$ and $b(F)= b(g \cdot F)$ such that
$$
\left\|\varphi(F)-\sum_{E \in \mathcal{P}} \varphi(E)\right\| \leq \sum_{E \in \mathcal{P}} b(E),
$$
for every $F \in \FSet$ and every \emph{monotone invariant} partition $\Part$ of $F$. Here, $\vertG{\cdot}$ is a semi-norm that measures the asymptotical behavior of set maps (see \S \ref{sec:almost-a} for details). When restricted to countable amenable groups, our definition of almost additivity is more general than the one considered by Pogorzelski (see \S \ref{sec:mean-ergodic}), and when $G=\Z$, our notion is more general than the one introduced by Barreira (see \S \ref{sec:aa-constant}).  This definition has also a version adapted Riesz spaces; we develop this idea in \S \ref{sec:aa_riesz} and go on to define the notion of \emph{Riesz-almost additive}. 

Yet another additivity assumption that has been considered was introduced by Feng and Huang \cite{feng-huang}. A sequence of real-valued continuous functions $(f_n)_n$ is \emph{asymptotically additive} if
\begin{equation*} \label{aa-fh}
\inf_{f \in \mathcal{C}(X)} \limsup_{n \to \infty} \frac{1}{n} \left\|  f_n - \sum_{i=0}^{n-1} f \circ T^i  \right\|_{\infty} =0,
\end{equation*}
where $\mathcal{C}(X)$ is the space of continuous functions and $\|\cdot\|_{\infty}$ the uniform norm. Their main motivation stems from the study of Lyapunov exponents of matrix products and the Lyapunov exponents of differential maps on nonconformal repellers. This notion was extended to higher dimensions by Yan \cite{yan2013sub}. In \cite{bricenoiommi1}, we proposed a generalization of asymptotical additivity for set maps $\varphi \in \Maps$:
\begin{equation} \label{defi_aa}
\inf_{\psi \in \AdMaps} \limsup_{F \to G}\left\|\frac{\varphi(F)}{|F|} -\frac{\psi(F)}{|F|}\right\| =0,
\end{equation}
where the limit superior is taken along arbitrary F{\o}lner sequences (see \S \ref{sec:set} for definitions). This notion is broader and more comprehensive than the one proposed by Feng and Huang \cite{feng-huang}. In \cite[Theorem 1]{bricenoiommi1}, we proved that the infimum in Equation \eqref{defi_aa} is actually attained. This results, which generalizes previous work of Cuneo \cite{cuneo2020additive}, establishes that there exists $v \in V$ such that $\varphi(F)$ can be approximated by the corresponding Birkhoff (hence additive) average of $v$ (see Corollary \ref{cor:cuneo}  or \cite[Corollary 1]{bricenoiommi1}). Such a result allows for the reduction of questions involving asymptotically additive set maps to additive set maps.

Our first main result establishes relationships among these additivity notions, providing a detailed classification.

\begin{restatable}{thm}{theoremone}
\label{thm:main}
Let $G$ be a countable amenable group $G$, $(V,\|\cdot\|)$ a complete semi-normed space, $\pi\colon G \to \mathrm{Isom}(V)$ a uniformly bounded representation, and $\varphi\colon \FSet \to V$ a boun\-ded and $G$-equivariant set map. Then, 
\begin{align*}
\varphi \text{ is asymptotically additive}  &   \implies   \varphi \text{ is almost additive},
\end{align*}
and, if $G$ is residually finite, then the converse holds. Moreover, if $(V,\|\cdot\|,\ll)$ is a complete semi-normed Riesz space, then
$$
\varphi \text{ is Riesz-almost additive} \implies   \varphi \text{ is almost additive}.
$$
The converse does not hold in general, but for the Banach lattice of bounded real-valued functions over a set $X$, it holds if $\pi$ is unital.
\end{restatable}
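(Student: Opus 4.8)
The plan is to prove the four implications separately, the converse under residual finiteness being where the real work lies. The softest implication is that Riesz-almost additivity implies almost additivity: if $\beta\colon\FSet\to V_+$ is a witness, so that $|\varphi(F)-\sum_{E\in\Part}\varphi(E)|\ll\sum_{E\in\Part}\beta(E)$ for every monotone invariant partition $\Part$ of $F$, then monotonicity of the lattice norm gives $\|\varphi(F)-\sum_{E\in\Part}\varphi(E)\|\le\|\sum_{E\in\Part}\beta(E)\|\le\sum_{E\in\Part}\|\beta(E)\|$, so $b(E):=\|\beta(E)\|$ is a witness for almost additivity: it is translation-invariant because $\pi$ acts by isometries and $\beta$ is equivariant, and $\vertG{b}=0$ because $\vertG{\beta}=0$.

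For the implication from asymptotic additivity to almost additivity I would reduce to the additive case. By \cite[Theorem 1]{bricenoiommi1} (see also Corollary \ref{cor:cuneo}) the infimum in \eqref{defi_aa} is attained, so there is $\psi\in\AdMaps$ with $\vertG{\varphi-\psi}=0$; put $\delta:=\varphi-\psi$, which is bounded and satisfies $\vertG{\delta}=0$. Since $\psi$ is additive, $\psi(F)=\sum_{E\in\Part}\psi(E)$ for every partition $\Part$ of $F$, so $\varphi(F)-\sum_{E\in\Part}\varphi(E)=\delta(F)-\sum_{E\in\Part}\delta(E)$, whose norm is at most $\|\delta(F)\|+\sum_{E\in\Part}\|\delta(E)\|$. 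It then remains to produce a translation-invariant $b\colon\FSet\to\R$ with $\vertG{b}=0$ dominating this quantity along monotone invariant partitions: I would let $b(E)$ depend on the degree of F{\o}lner-invariance of $E$ — of order $\vertsup{\delta}\,|E|$ when $E$ is far from invariant, but only $o(|E|)$ when $E$ is highly invariant, the latter rate being forced by $\vertG{\delta}=0$ — so that $\|\delta(E)\|\le b(E)$ for every $E$ while $\vertG{b}=0$ still holds, invariance being tested only along F{\o}lner sequences. The technical point here is this construction together with checking that $b$ also absorbs the stray term $\|\delta(F)\|$ when $\Part$ is a monotone invariant partition, using that the parts of such a partition are large and inherit enough of the ambient invariance.

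The heart of the theorem is the converse for residually finite $G$. Starting from a residual chain $G=G_0\supseteq G_1\supseteq\cdots$ of finite-index normal subgroups with $\bigcap_n G_n=\{e\}$, one chooses F{\o}lner sets $D_n$, fundamental domains of the $G_n$, with $D_n\subseteq D_{n+1}$ and, for $n<m$, a partition $D_m=\bigsqcup_{c\in C_{n,m}}cD_n$ into (left) translates of $D_n$; these are monotone invariant partitions, and producing such a tiling F{\o}lner sequence from the residual chain is exactly where residual finiteness is indispensable — its failure being what allows the converse to break down for a general amenable group. Applying almost additivity to this partition gives $\big\|\varphi(D_m)/|D_m|-|C_{n,m}|^{-1}\sum_{c\in C_{n,m}}\pi(c)\,\varphi(D_n)/|D_n|\big\|\le b(D_n)/|D_n|$, and $b(D_n)/|D_n|\to 0$ because $\vertG{b}=0$. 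Since the tile-centre sets $C_{n,m}$ are themselves F{\o}lner in $G$ as $m\to\infty$, the averaging operators $|C_{n,m}|^{-1}\sum_{c\in C_{n,m}}\pi(c)$ are asymptotically $\pi$-invariant, and feeding this into the mean ergodic machinery of \cite{bricenoiommi1} shows that $(\varphi(D_n)/|D_n|)_n$ converges to some $\pi$-invariant $v\in V$. To conclude $\vertG{\varphi-\psi}=0$ for the additive set map $\psi$ given by the Birkhoff sums of $v$, one takes an arbitrary F{\o}lner set $F$, quasi-tiles it by translates of a fixed $D_n$ with a remainder small relative to $|F|$, and uses almost additivity and the boundedness of $\varphi$ to show $\varphi(F)/|F|\to v$. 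The two places I expect to be delicate are the construction of the tiling F{\o}lner sequence and the remainder estimate in this last step.

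Finally, for the passage from almost additivity to Riesz-almost additivity when $V=B(X,\R)$ is the Banach lattice of bounded real-valued functions on $X$ and $\pi$ is unital: given a witness $b\ge 0$ for almost additivity, the estimate $\|\varphi(F)-\sum_{E\in\Part}\varphi(E)\|_\infty\le\sum_{E\in\Part}b(E)$ is, in this space, equivalent to the order estimate $|\varphi(F)-\sum_{E\in\Part}\varphi(E)|\ll\big(\sum_{E\in\Part}b(E)\big)\mathbbm{1}$, so $\beta(E):=b(E)\,\mathbbm{1}$ is a witness for Riesz-almost additivity; it is $G$-equivariant precisely because $\pi(g)\mathbbm{1}=\mathbbm{1}$ (unitality) and $b$ is translation-invariant, and $\vertG{\beta}=0$ since $\|\beta(E)\|_\infty=b(E)$. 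To see that this converse genuinely requires extra structure, I would exhibit an almost additive $\varphi$ into a Banach lattice possessing no invariant positive order unit whose failure of additivity is negligible in norm but cannot be dominated in the order by any equivariant family — the absence of an invariant order unit being the obstruction.
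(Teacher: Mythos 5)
Your overall architecture---four separate implications, with the asymptotically-additive direction reduced to the additive case via an additive realization and the $\mathcal{B}(X)$ case handled by $\beta(E)=b(E)\mathbbm{1}$---matches the paper's. But the key implication, the converse under residual finiteness, has a genuine gap. You propose to show that $\varphi(D_n)/|D_n|$ converges to a $\pi$-\emph{invariant} vector $v$ by ``mean ergodic machinery'' and then to prove $\varphi(F)/|F|\to v$. In the stated generality $V$ is an arbitrary complete semi-normed space with no weak compactness, reflexivity, or Hilbert structure, so ergodic averages $A_Fv$ need not converge and no invariant limit need exist; this is exactly why Theorem \ref{thm:meanasymp} carries an extra compactness hypothesis. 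Moreover, asymptotic additivity only requires $\varphi(F)/|F|-A_Fv\to 0$ for \emph{some} $v$, not convergence of $\varphi(F)/|F|$ itself, so you are trying to prove a strictly stronger (and here false) statement. The paper's proof of Theorem \ref{thm_zzc} avoids convergence entirely: for each $\epsilon>0$ it fixes a single tile $F_n$ with $b(F_n)\le|F_n|\epsilon$ and $\vertsup{\varphi}\le|F_n|\epsilon$, sums the almost-additivity estimates over the $|F_n|$ phases of the exact tiling $\{F_ng:g\in H_nt\}$ restricted to $F$, and obtains the explicit bound $\limsup_{F\to G}\left\|\varphi(F)/|F|-A_Fv_\epsilon\right\|\le\epsilon$ with $v_\epsilon=\varphi(F_n)/|F_n|$; letting $\epsilon\to 0$ gives asymptotic additivity directly from its definition as an infimum. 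You should replace the mean-ergodic step (and the quasi-tiling of Ornstein--Weiss type, which is unnecessary once exact tilings are available) by this quantitative estimate.

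Three smaller points. First, in ``asymptotically additive $\Rightarrow$ almost additive'' your justification for absorbing the stray term $\|\delta(F)\|$ rests on the claim that parts of a monotone invariant partition ``are large and inherit enough of the ambient invariance''; this is false---the partition of a large invariant $F$ into singletons is monotone invariant (the defining implication is satisfied because whenever a singleton is $(K,\delta)$-invariant, so is every finite set), so parts can be arbitrarily small and non-invariant. The missing ingredient is Lemma \ref{lem:kernelsubad}: every $b\in\NMaps_\R$ is dominated by some $b'\in\NMaps_\R^+$ that is sub-additive over monotone invariant partitions, whence $\|\delta(F)\|\le b'(F)\le\sum_{E\in\Part}b'(E)$. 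Second, in ``Riesz-almost additive $\Rightarrow$ almost additive'' your $b(E):=\|\beta(E)\|$ need not satisfy $b(g\cdot E)=b(E)$, since $\pi$ is only uniformly bounded, not isometric; take $b(E)=\sup_{g\in G}\|\beta(g\cdot E)\|$ as the paper does. Third, the assertion that the converse fails in general is part of the statement and needs an actual counterexample; your heuristic about the absence of an invariant positive order unit does not supply one and does not match the true obstruction---the paper's Example \ref{exmp:counterexample} lives in $L^1([0,1],\R)$ with the trivial action, where $\mathbbm{1}$ is an invariant weak order unit, and the point is rather that $L^1$-smallness of the additivity defect does not control it in the pointwise order.
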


This result completely clarifies the hierarchy of weak additivity notions. A diagram summarizing these relationships is presented in Figure \ref{diagram}.

Since the inception of non-additive ergodic theory, a common strategy for proving non-additive ergodic theorems has been to reduce them to the additive case. This reduction involves demonstrating that a given non-additive phenomenon can be approximated by an additive one, plus a negligible error term. Kingman’s original proof of the sub-additive ergodic theorem \cite{kingman1968} exemplifies this approach. Similarly, in Theorem \ref{thm:cuneo} and Corollary \ref{cor:cuneo} (originally proved in \cite[Theorem 1]{bricenoiommi1}), we show that an asymptotically additive set map taking values in a complete semi-normed space equals an additive one, plus a negligible error term. Specifically, there exists $v \in V$ such that
$$
\lim_{F \to G} \left\|\frac{\varphi(F)}{|F|}-A_F v\right\| = 0, \quad \text{ where } \quad A_F v= \frac{1}{|F|}\sum_{g \in F} \pi(g^{-1})v \quad   \text{ for }  v \in V.
$$
That is, we can approximate $\varphi(F)$ with the Birkhoff sum of $v$. Together with Theorem \ref{thm:main}, this framework enables us to derive new ergodic theorems for non-additive set maps into Banach spaces in the context of representations of amenable groups. Moreover, it provides a clearer and simpler perspective on the proofs of known results in the area.

Our first result is a generalization of the mean ergodic theorem, originally proved by von Neumann \cite{vn}, and later extended to encompass more general dynamical systems, groups, and maps. Relevant contributions in this context include \cite[Theorem 4.22]{kerr2016ergodic}, \cite[\S 6.4.1]{krengel2011ergodic}, and {\cite[Theorem 4.4]{pogorzelski2013almost}. Let $\mathrm{Inv}(\pi)$ be the subspace of $G$-invariant vectors.

\begin{restatable}{thm}{theoremtwo}
\label{thm:meanasymp}
Let $G$ be a countable discrete amenable group, $(V,\|\cdot\|)$ a Banach space, and $\pi\colon G \to \mathrm{Isom}(V)$ a uniformly bounded representation. Suppose that, for each $v \in V$, the convex hull $\mathrm{co}\left\{\pi(g) v : g \in G\right\}$ is relatively weakly compact. Then, there is bounded projection $P$ on $V$ such that for every bounded and $G$-equivariant asymptotically additive set map $\varphi\colon \FSet \to V$, it holds that
$$
\lim_{F \to G}\left\|\frac{\varphi(F)}{|F|} - P v\right\|=0
$$
for every additive realization $v \in V$ of $\varphi$. Moreover, if $(V,\|\cdot\|)$ is a Hilbert space, the assumption on the convex hull is not necessary and $P$ is the orthogonal projection of $V$ onto $\mathrm{Inv}(\pi)$.
\end{restatable}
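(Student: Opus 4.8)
The plan is to reduce the statement to the purely additive (Birkhoff) case and then apply the classical mean ergodic theorem for representations of amenable groups. Given a bounded, $G$-equivariant, asymptotically additive set map $\varphi\colon\FSet\to V$, the first step is to invoke Corollary \ref{cor:cuneo} (i.e.\ \cite[Corollary 1]{bricenoiommi1}) to produce an additive realization $v\in V$, namely a vector with $\lim_{F\to G}\|\varphi(F)/|F|-A_Fv\|=0$. This is where the non-additive content of $\varphi$ is entirely absorbed; what remains is to understand $\lim_{F\to G}A_Fv$, a question in which $\varphi$ no longer appears.

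For the Banach-space statement, the hypothesis that $\mathrm{co}\{\pi(g)w:g\in G\}$ is relatively weakly compact for \emph{every} $w\in V$ is precisely the input required by the mean ergodic theorem for uniformly bounded amenable-group representations (see \cite[\S 6.4.1]{krengel2011ergodic} and \cite[Theorem 4.22]{kerr2016ergodic}): along any F{\o}lner net, $A_Fw$ converges in norm to a vector $Pw$, where $P$ is a bounded linear projection of $V$ onto $\mathrm{Inv}(\pi)$ depending only on $\pi$. Combining this with the first step and the triangle inequality,
\[
\left\|\frac{\varphi(F)}{|F|}-Pv\right\|\ \le\ \left\|\frac{\varphi(F)}{|F|}-A_Fv\right\|+\left\|A_Fv-Pv\right\| ,
\]
whose right-hand side tends to $0$ as $F\to G$. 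Finally, any two additive realizations $v,v'$ of $\varphi$ satisfy $\|A_Fv-A_Fv'\|\to0$, hence $Pv=Pv'$, so the limit $Pv$ is independent of the chosen realization, as claimed.

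For the Hilbert-space refinement, two simplifications occur. First, $V$ is reflexive, so every bounded subset — in particular every $\mathrm{co}\{\pi(g)w:g\in G\}$, which is bounded because $\pi$ takes values in $\Isom(V)$ — is relatively weakly compact, so the extra hypothesis is automatic. Second, one identifies $P$: each $\pi(g)$ is an invertible norm-preserving operator on $V$ (its inverse $\pi(g^{-1})$ is again an isometry), hence unitary, so $\pi(g)^{*}=\pi(g^{-1})$. Then $\mathrm{Inv}(\pi)$ is orthogonal to $M:=\overline{\mathrm{span}}\{w-\pi(g)w:w\in V,\ g\in G\}$, since $\langle w-\pi(g)w,u\rangle=\langle w,u\rangle-\langle w,\pi(g^{-1})u\rangle=0$ for $u\in\mathrm{Inv}(\pi)$, and a standard argument gives $V=\mathrm{Inv}(\pi)\oplus M$. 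On $\mathrm{Inv}(\pi)$ the averages $A_F$ act as the identity, while on a coboundary a F{\o}lner telescoping estimate gives
\[
\left\|A_F\bigl(w-\pi(g)w\bigr)\right\|=\left\|A_Fw-A_{g^{-1}F}w\right\|\ \le\ \frac{|F\triangle g^{-1}F|}{|F|}\,\|w\|\ \longrightarrow\ 0 .
\]
Since $\|A_F\|\le1$, this extends by density to all of $M$, so $A_Fw\to P_{\mathrm{Inv}(\pi)}w$ and $P$ is the orthogonal projection onto $\mathrm{Inv}(\pi)$; the previous paragraph then applies verbatim with this $P$.

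All the genuine difficulty here is concentrated in Corollary \ref{cor:cuneo}; once $\varphi$ has been traded for the vector $v$, the argument is an invocation of the classical mean ergodic theorem. The two points I would be careful about are that $P$ must be defined on all of $V$ — which is exactly what the ``for every $v\in V$'' in the weak-compactness hypothesis guarantees — and, in the Hilbert case, the identification of the range of $P$ with $\mathrm{Inv}(\pi)$ via unitarity of the $\pi(g)$ and the F{\o}lner telescoping estimate above; I expect the latter, elementary as it is, to be the most hands-on step.
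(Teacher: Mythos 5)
Your proposal follows essentially the same route as the paper: invoke Corollary \ref{cor:cuneo} to trade $\varphi$ for an additive realization $v$, cite the additive mean ergodic theorem to get $A_F v \to Pv$, and combine by the triangle inequality; the paper's proof is exactly this three-line reduction, delegating the additive case to \cite[Theorem 4.4]{pogorzelski2013almost} and \cite[Theorem 4.22]{kerr2016ergodic} via Remark \ref{rem:meanerg}. Your observation that $Pv$ is independent of the chosen realization (via Lemma \ref{lem:witness}, two realizations differ by a weak coboundary, which $P$ annihilates) is a point the paper leaves implicit, and it is worth making since the statement quantifies over all realizations.

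One caveat concerns your self-contained Hilbert-space argument. You assert that each $\pi(g)$ is norm-preserving, hence unitary with $\pi(g)^{*}=\pi(g^{-1})$, and you use $\|A_F\|\leq 1$. Neither follows from the hypotheses: in this paper $\Isom(V)$ denotes the group of bounded invertible linear operators (not isometries), and $\pi$ is only assumed uniformly bounded, so in general $C_\pi>1$, the adjoint identity fails, and $\mathrm{Inv}(\pi)$ need not be orthogonal to the closed span of coboundaries in the given inner product. Your telescoping estimate survives with an extra factor of $C_\pi$, and the decomposition $V=\mathrm{Inv}(\pi)\oplus\wCob$ still holds, but identifying $P$ as the \emph{orthogonal} projection requires an additional argument (e.g.\ unitarizability of uniformly bounded representations of amenable groups, or the adaptation of \cite[Theorem 4.22]{kerr2016ergodic} that the paper alludes to in Remark \ref{rem:meanerg}). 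As written, your Hilbert-space paragraph proves the unitary case only; either restrict to $C_\pi=1$ or supply the reduction from uniformly bounded to unitary.
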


By Theorem \ref{thm:main}, this result also applies to almost additive set maps when $G$ is residually finite.

Next, we address the pointwise ergodic theorem, initially proved by Birkhoff \cite{birkhoff} and extended by Lindenstrauss \cite[Theorem 2.1]{lindenstrauss2001pointwise} to amenable groups.Extensions to set maps taking values in Banach spaces have been developed by Pogorzelski \cite[Theorem 6.8]{pogorzelski2013almost}. Here, we prove a pointwise ergodic theorem for asymptotically additive set maps with values in a Banach space. For $1 \leq p < \infty$, a measurable space $(\Omega,\mathcal{B},\mu)$, and a Banach space $(Y,\|\cdot\|_Y)$, consider the intersection of Bochner spaces $L^p_\infty(\Omega, Y) := L^p(\Omega, Y) \cap L^\infty(\Omega, Y)$, which is a Banach space with a natural norm (see \S \ref{sec:point}).

\begin{restatable}{thm}{theoremthree}
\label{thm:pointwise-asymp}
Let $G$ be a countable discrete amenable group acting on a $\sigma$-finite measure space $(\Omega, \mathcal{B}, \mu)$ by measure-preserving transformations, $(Y,\|\cdot\|_Y)$ a reflexive Banach space, and $\pi\colon G \to \mathrm{Isom}(L^p_\infty(\Omega, Y))$ a uniformly bounded representation for some $1 \leq p < \infty$. Assume that there is some group homomorphism $\theta: G \to G$ and a constant $B>0$ such that for every $g \in G$ and each $f \in L^p_\infty(\Omega, Y)$,
$$
\left\|(\pi(g)f)(\omega)\right\|_Y \leq B\left\|f\left(\theta(g)^{-1} \cdot \omega\right)\right\|_Y  \quad   \mu(\omega)\text{-a.e.}
$$
Then, if $(F_n)_n$ is a tempered F{\o}lner sequence in $G$, for each bounded and $G$-equivariant asymptotically additive set map $\varphi\colon \FSet \to L^p_\infty(\Omega, Y)$, there exists $\overline{f} \in L^p_\infty(\Omega, Y)$ such that
$$
\lim_{n \to \infty}\left\|\frac{\varphi(F_n)}{|F_n|}(\omega) - \overline{f}(\omega)\right\|_Y=0    \quad   \mu(\omega)\text{-a.e.}
$$
Moreover, $\pi(g) \overline{f}=\overline{f}$ in $L^p_\infty(\Omega, Y)$ for all $g \in G$.
\end{restatable}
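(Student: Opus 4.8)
The plan is to reduce the non-additive statement to an additive, scalar-valued pointwise ergodic theorem via the additive realization furnished by Theorem \ref{thm:cuneo}. First I would invoke Theorem \ref{thm:cuneo} (equivalently Corollary \ref{cor:cuneo}) to obtain a vector $v \in L^p_\infty(\Omega, Y)$ such that $\|\varphi(F)/|F| - A_F v\|_{L^p_\infty} \to 0$ as $F \to G$, where $A_F v = |F|^{-1}\sum_{g \in F}\pi(g^{-1})v$. Writing $\overline{f}$ for the candidate limit, the triangle inequality splits the target into two pieces: the pointwise a.e.\ convergence of the genuine Birkhoff averages $A_{F_n}v \to \overline{f}$, and the control of the error $\varphi(F_n)/|F_n| - A_{F_n}v$ along the tempered F{\o}lner sequence. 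For the first piece, the domination hypothesis $\|(\pi(g)f)(\omega)\|_Y \leq B\|f(\theta(g)^{-1}\cdot\omega)\|_Y$ lets me compare $\|(A_{F_n}v)(\omega)\|_Y$ with the scalar Birkhoff averages of the function $\omega \mapsto \|v(\omega)\|_Y \in L^p_\infty(\Omega,\R) \subseteq L^1_\mu$ along the sets $\theta(F_n)^{-1}$ (or their images), to which Lindenstrauss' pointwise ergodic theorem \cite[Theorem 2.1]{lindenstrauss2001pointwise} applies once one checks $(\theta(F_n))_n$ is again tempered F{\o}lner for the $\theta$-twisted action — here I would use that $\theta$ is a group homomorphism and that temperedness and the F{\o}lner property are preserved (at worst passing to a subsequence). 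Reflexivity of $Y$ enters to guarantee that the vector-valued averages $A_{F_n}v$ themselves converge a.e.\ in $Y$, not merely in norm on scalars: one argues that $(A_{F_n}v(\omega))_n$ is Cauchy in $Y$ for a.e.\ $\omega$ by a maximal-inequality plus the mean ergodic theorem argument, the reflexive structure ensuring the weak and norm limits coincide and the limit lies in $L^p_\infty(\Omega,Y)$.

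For the second piece, I would exploit that convergence in $L^p_\infty$-norm forces, after passing to a subsequence, pointwise a.e.\ convergence of $\|\varphi(F_n)/|F_n|(\omega) - A_{F_n}v(\omega)\|_Y$ to $0$; upgrading this from a subsequence to the full tempered sequence requires either a maximal inequality for the non-additive averages (obtainable from the additive maximal inequality plus the norm estimate, since $\varphi(F_n) = |F_n|\,A_{F_n}v + o(|F_n|)$ in $L^p_\infty$) or a direct argument that the error, being small in $L^\infty(\Omega,Y)$-norm along the whole sequence when $\varphi$ is bounded and asymptotically additive, is small a.e. In fact the $L^\infty$ component of the $L^p_\infty$ norm is the key observation: $\|\varphi(F_n)/|F_n| - A_{F_n}v\|_{L^\infty(\Omega,Y)} \to 0$ gives essential-sup control, hence a.e.\ control uniformly, sidestepping the maximal inequality for the error term entirely. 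This is why the statement is phrased with the intersection space $L^p_\infty$ rather than plain $L^p$.

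Finally, the $G$-invariance of $\overline{f}$ follows from the invariance of the additive realization's limit: the mean ergodic theorem (Theorem \ref{thm:meanasymp}, or directly the norm convergence $A_{F_n}v \to \overline{f}$ in $L^p_\infty$) identifies $\overline{f}$ with a projection of $v$ onto the invariant vectors, and one checks $\pi(g)\overline{f} = \overline{f}$ by applying $\pi(g)$ and using that $A_{gF_n}v$ and $A_{F_n}v$ have the same limit by the F{\o}lner property. I expect the main obstacle to be the a.e.\ convergence of the vector-valued Birkhoff averages $A_{F_n}v$ in $Y$: scalarizing via $\|v(\cdot)\|_Y$ only yields boundedness and domination, not convergence of the $Y$-valued averages themselves, so one genuinely needs a Banach-space-valued pointwise ergodic theorem for tempered F{\o}lner sequences under the domination hypothesis — establishing this (via reflexivity, a transference/maximal inequality argument, and approximation by simple functions in $L^p_\infty(\Omega,Y)$) is the technical heart of the proof.
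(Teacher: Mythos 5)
Your overall strategy coincides with the paper's: obtain an additive realization $v$ via Corollary \ref{cor:cuneo}, split $\varphi(F_n)/|F_n| - \overline{f}$ by the triangle inequality into the error term $\varphi(F_n)/|F_n| - A_{F_n}v$ and the additive term $A_{F_n}v - \overline{f}$, and observe that the $L^\infty$ component of the $\|\cdot\|_{L^p_\infty(\Omega,Y)}$ norm converts norm convergence of the error term into a.e.\ convergence along the whole sequence, with no maximal inequality needed. That last observation is precisely the paper's reason for working in $L^p_\infty(\Omega,Y)$ rather than $L^p(\Omega,Y)$, and you identified it correctly.

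The divergence is in how the additive term is handled. What you call ``the technical heart'' --- a pointwise ergodic theorem for the Banach-space-valued averages $A_{F_n}v$ in $L^p(\Omega, Y)$ with $Y$ reflexive, along a tempered F{\o}lner sequence, under the domination hypothesis --- is not proved in the paper either: it is imported wholesale as \cite[Theorem 6.8]{pogorzelski2013almost} (see Remark \ref{rem:pogorzelski6-8}). The hypotheses of the theorem (reflexivity of $Y$, temperedness, the homomorphism $\theta$ and the constant $B$) are there exactly to match Pogorzelski's statement, so no fresh maximal-inequality or transference argument is required. If you did want to reprove that ingredient, your sketch has soft spots: $\theta$ is only assumed to be a group homomorphism, so $(\theta(F_n))_n$ need not be a F{\o}lner sequence (it could even be constant), which breaks the proposed reduction to Lindenstrauss along $\theta(F_n)^{-1}$; and your route to the invariance $\pi(g)\overline{f}=\overline{f}$ via Theorem \ref{thm:meanasymp} is not available, since no weak-compactness hypothesis on convex hulls is assumed for $L^p_\infty(\Omega,Y)$ (the paper obtains the invariance from the cited theorem as well). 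Finally, you assert but do not verify that $\overline{f}$ lies in $L^p_\infty(\Omega,Y)$ rather than merely in $L^p(\Omega,Y)$; the paper closes this by using the domination hypothesis to get $\|A_F f(\omega)\|_Y \leq B\|f\|_{L^\infty(\Omega,Y)}$ a.e.\ and passing to the limit along $(F_n)_n$.
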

As with Theorem \ref{thm:meanasymp}, this result extends to almost additive set maps when $G$ is residually finite.

In conclusion, our results clarify the hierarchy of weak additivity notions and demonstrate their utility in reducing non-additive problems to additive ones, enabling the derivation of ergodic theorems in broader settings.

\section{Preliminaries}

\subsection{Amenable groups}

Let $G$ be a countable discrete group with identity element $1_G$. We denote by $\mathcal{F}(G)$ the set of nonempty finite subsets of $G$. Given $K \in \mathcal{F}(G)$ and $\delta > 0$, we say that $F \in \mathcal{F}(G)$ is {\bf $(K,\delta)$-invariant} if
$$
|KF \Delta F| \leq \delta|F|,
$$
and define its {\bf $K$-interior} as 
$$
\mathrm{Int}_K(F) = \{g \in G: Kg \subseteq F\},
$$
and its {\bf $K$-boundary} as
$$
\partial_K(F) = \{g \in G: Kg \cap F \neq \emptyset, Kg \cap (G \setminus F) \neq \emptyset\}.
$$

We say that $G$ is {\bf amenable} if for every $(K,\delta) \in \mathcal{F}(G) \times \R_{> 0}$ there exists a $(K,\delta)$-invariant set $F \in \mathcal{F}(G)$. A sequence $(F_n)_n$ in $\mathcal{F}(G)$ is {\bf (left) F{\o}lner} for $G$ if
$$
\lim_{n \to \infty} \frac{\left|F_n \Delta gF_n\right|}{\left|F_n\right|}=0 \quad \text{ for any } g \in G.
$$
A countable discrete group is amenable if and only if it has F{\o}lner sequence. A F{\o}lner sequence $(F_n)_n$ is said to be {\bf tempered} if there exists a constant $D > 0$ such that
$$
\left|\left(\bigcup_{i<n} F_i^{-1}\right) F_n\right| \leq D\left|F_n\right| \quad   \text{ for every } n.
$$
It can be checked that every F{\o}lner sequence has a tempered F{\o}lner subsequence.

\subsection{Semi-normed and Riesz spaces}

A semi-normed vector space $(V,\|\cdot\|)$ is said to be {\bf complete} if for any Cauchy sequence $(v_n)_n$ in $V$, there exists $v \in V$ such that $\|v_n - v\| \to 0$. The {\bf kernel} of $\|\cdot\|$ is the set $\kernel := \{v \in V: \|v\| = 0\}$. It is known that the set $\kernel$ is a closed and complete subspace of $V$ (see \cite[Proposition 2.1]{bricenoiommi1}).

A {\bf Riesz space} is a real partially ordered vector space $(V, \ll)$ such that, for every $u,v,w \in V$ and $\alpha \in \R$,
\begin{enumerate}
\item[(a)] if $u \ll v$, then $u+w \ll v+w$;
\item[(b)] if $0 \ll v$ and $0 \leq \alpha$, then $0 \ll \alpha v$;
\item[(c)] there exists a least upper bound $u \lor v \in V$ for $u$ and $v$.
\end{enumerate}
The previous assumptions imply that, for any $u,v \in V$, there is also a greatest lower bound $u \land v \in V$  given by $-((-u) \lor (-v))$. We define $|v| = v \lor (-v)$. Some useful properties that any Riesz space satisfies are the following (see \cite[Ch. 2, \S 11-12]{luxemburg1}):
\begin{enumerate}
\item $2(u \lor v) = u+v + |u-v|$.
\item $||u|-|v|| \ll |u+v| \ll |u|+|v|$.
\item $0 \ll |u \lor v| \ll |u| \lor |v| \ll |u|+|v|$.
\end{enumerate}

A consequence of (1) is that, in order to check condition (c) above, it suffices to prove that $|v|$ exists for every $v \in V$. A vector $v \in V$ is called {\bf positive} whenever $0 \ll v $. Observe that $0 \ll |v|$ for all $v \in V$. The set $V^+ = \{v \in V: 0 \ll v\}$ is called the {\bf positive cone} of $V$. 

A semi-norm $\|\cdot\|$ on a Riesz space $(V, \ll)$ is a {\bf Riesz semi-norm} if
$$
|u| \ll |v| \implies \|u\| \leq \|v\|.
$$
In this case, $\||v|\| = \|v\|$ for every $v \in V$.

A tuple $(V,\|\cdot\|,\ll)$ such that $(V,\ll)$ is a Riesz space and $\|\cdot\|$ is a Riesz semi-norm on it will be called {\bf semi-normed Riesz space}. A semi-normed Riesz space is said to be a {\bf complete semi-normed Riesz space} if the associated semi-normed vector space is complete. A complete semi-normed Riesz space such that $\|\cdot\|$ is a norm is called {\bf Banach lattice}. Notice that $(\R,|\cdot|,\leq)$ is a Banach lattice for $|\cdot|$ the absolute value and $\leq$ the usual order.

\subsection{Group representations}

Let $(V,\|\cdot\|)$ be a semi-normed vector space, and denote by $\Isom(V)$ the group of bounded invertible linear operators on $V$. For a countable discrete group $G$, a group homomorphism $\pi\colon G \to \Isom(V)$ is called a {\bf representation}, and $\pi$ is said to be {\bf uniformly bounded} if, for $\left\|\cdot\right\|_{\mathrm{op}}$ the linear operator norm,
$$
C_\pi := \sup_{g \in G}\left\|\pi(g)\right\|_{\mathrm{op}}<\infty.
$$
If $C_\pi = 1$, we say $\pi$ is {\bf isometric}. Given a representation $\pi$, define the {\bf ergodic sum} and {\bf ergodic average}, respectively, by
$$
S_F v = \sum_{g \in F} \pi(g^{-1}) v \quad \text{ and } \quad A_F v = \frac{1}{|F|} S_F v,
$$
for $F \in \FSet$ and $v \in V$. Let $\mathrm{Inv}(\pi) = \{v \in V: \pi(g)v = v \text{ for all } g \in G\}$ be the subspace of {\bf $G$-invariant vectors}. Observe that $v \in \mathrm{Inv}(\pi)$ if and only if $A_F v = v$ for every $F \in \FSet$.

\subsection{Bochner spaces}

Consider a $\sigma$-finite measure space $(\Omega, \mathcal{B}, \mu)$ and a Banach space $(Y,\|\cdot\|_Y)$. A function $s: \Omega \to Y$ is a {\bf step function} if it can be written as $s = \sum_{i=1}^k y_i\mathbbm{1}_{A_i}$ for some $k \in \N$, and $y_i \in Y$ and $A_i \in \mathcal{B}$ with $\mu(A_i) < \infty$ for every $i=1,\dots,k$. A function $f: \Omega \to Y$ is {\bf Bochner-measurable} if there exists a sequence $(s_n)_n$ of step functions $s_n: \Omega \to Y$ such that
$$
f(\omega)=\lim_{n \to \infty} s_n(\omega) \quad   \mu(\omega)\text{-a.e.}
$$

Note that, if $f$ is Bochner-measurable, then $\omega \mapsto \|f(\omega)\|_Y$ is Lebesgue-measurable. For $1 \leq p \leq \infty$, let $(\Lp^p(\Omega, Y),\|\cdot\|_{\Lp^p(\Omega,Y)})$ be the complete semi-normed space of Bochner-measurable functions for which $\|f\|_{\Lp^p(\Omega,Y)}  < \infty$, where
$$
\|f\|_{\Lp^p(\Omega,Y)} =   \begin{cases}
    \left(\int_\Omega \|f(\omega)\|^p_Y d\mu(\omega)\right)^{1/p} &   \text{ if } 1 \leq p < \infty,  \\
    \mathrm{ess} \sup_{\omega \in \Omega} \|f(\omega)\|_Y &   \text{ if } p = \infty.
\end{cases} 
$$
A Bochner-measurable function $f$ belongs to $\Lp^p(\Omega,Y)$ if and only if there exists a sequence of step functions $(s_n)_n$ such that $\|f(\cdot)-s_n(\cdot)\|_Y$ converges to $0$ in the Lebesgue space $\Lp^p(\Omega,\R)$ as $n$ goes to infinity. If $f \in \Lp^1(\Omega,Y)$, the {\bf Bochner integral} is given by
$$
\int_\Omega f d\mu = \lim_{n \to \infty} \int_\Omega s_n d \mu.
$$
This limit always exist and is independent of the approximating sequence of step functions $(s_n)_n$. For more details, see \cite[Ch. 6, \S 31]{zaanen1}. 

The {\bf Bochner space} $(L^p(\Omega, Y),\|\cdot\|_{L^p(\Omega,Y)})$ is the Banach space obtained as the quotient of $(\Lp^p(\Omega, Y),\|\cdot\|_{\Lp^p(\Omega,Y)})$ with its corresponding kernel. If we further assume some order structure on $Y$, the spaces $\Lp^p(\Omega,Y)$ and $L^p(\Omega,Y)$ inherit it, that is, if $Y$ is a Banach lattice, then $L^p(\Omega, Y)$ is a Banach lattice as well.

\begin{proposition}
\label{prop:Lp}
Let $1 \leq p \leq \infty$. If $(Y,\preccurlyeq)$ is a Banach lattice, then $\Lp^p(\Omega,Y)$ is a complete semi-normed Riesz space with the pointwise partial order $\ll$ given by
$$
f \ll h \quad \iff \quad f(\omega) \preccurlyeq h(\omega) \quad  \mu(\omega)\text{-a.e.},
$$
and $L^p(\Omega,Y)$ is a Banach lattice with the partial order induced by the quotient.
\end{proposition}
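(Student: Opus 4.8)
The plan is to transport the lattice structure of $Y$ to $\Lp^p(\Omega,Y)$ working $\mu$-almost everywhere, verify the three clauses in the definition of a Riesz space together with the Riesz semi-norm property, and then pass to the quotient. Since $\Lp^p(\Omega,Y)$ has already been recorded to be a complete semi-normed space, only the order-theoretic assertions require proof.

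First I would check that $\ll$ is a vector-space-compatible preorder satisfying clauses (a) and (b): reflexivity and transitivity hold because a finite union of $\mu$-null sets is $\mu$-null, while (a) and (b) hold $\mu$-a.e.\ because the corresponding axioms hold in $Y$. (Strictly, $\ll$ is only a preorder: it fails antisymmetry exactly on the kernel $\kernel=\{f:\ f=0\ \mu\text{-a.e.}\}$, which is the usual harmless degeneracy in the semi-normed setting and disappears upon quotienting.) For clause (c), given $f,h\in\Lp^p(\Omega,Y)$ I would \emph{define} $f\lor h$ pointwise by $(f\lor h)(\omega)=f(\omega)\lor h(\omega)$, the content being that this lies in $\Lp^p(\Omega,Y)$. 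Using the identity $2(f\lor h)=f+h+|f-h|$ valid $\omega$-wise in $Y$, and that $\Lp^p(\Omega,Y)$ is a vector space, it suffices to prove $|f|\in\Lp^p(\Omega,Y)$ whenever $f\in\Lp^p(\Omega,Y)$, where $|f|(\omega):=|f(\omega)|_Y$. Bochner-measurability of $|f|$ follows because $y\mapsto|y|_Y$ is $1$-Lipschitz on $Y$ (a consequence of the Riesz-space identities recalled above together with $\||y|_Y\|_Y=\|y\|_Y$): if step functions $s_n\to f$ $\mu$-a.e., then the step functions $|s_n|$ (refining the defining partitions if needed) satisfy $|s_n|\to|f|$ $\mu$-a.e. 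Integrability is immediate since $\||f(\omega)|_Y\|_Y=\|f(\omega)\|_Y$, so $\||f|\|_{\Lp^p(\Omega,Y)}=\|f\|_{\Lp^p(\Omega,Y)}$. Thus $|f|$, and therefore $f\lor h$, belongs to $\Lp^p(\Omega,Y)$; that $f\lor h$ is the least upper bound for $\ll$ is then routine, since $f,h\ll f\lor h$ holds everywhere and $f,h\ll g$ forces $f(\omega)\lor h(\omega)\preccurlyeq g(\omega)$ $\mu$-a.e. Finally, $\|\cdot\|_{\Lp^p(\Omega,Y)}$ is a Riesz semi-norm: if $|f|\ll|h|$, then $\|f(\omega)\|_Y\leq\|h(\omega)\|_Y$ $\mu$-a.e., and taking $L^p(\mu)$-norms (or essential suprema) gives $\|f\|_{\Lp^p(\Omega,Y)}\leq\|h\|_{\Lp^p(\Omega,Y)}$. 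This establishes the first assertion.

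For the quotient, I would note first that $\kernel$ is an order ideal of $\Lp^p(\Omega,Y)$: it is a subspace, and it is solid because $|g|\ll|f|$ with $f\in\kernel$ forces $|g(\omega)|_Y\preccurlyeq 0$ $\mu$-a.e., hence $|g(\omega)|_Y=0$ $\mu$-a.e., i.e.\ $g\in\kernel$. Consequently the pointwise-a.e.\ order descends to $L^p(\Omega,Y)=\Lp^p(\Omega,Y)/\kernel$ by $[f]\ll[h]\iff f(\omega)\preccurlyeq h(\omega)\ \mu\text{-a.e.}$; this relation is now genuinely antisymmetric (using antisymmetry of $\preccurlyeq$ in $Y$), the operation $[f]\lor[h]:=[f\lor h]$ is well defined and yields least upper bounds, and $\|\cdot\|_{L^p(\Omega,Y)}$ remains a Riesz semi-norm by the same estimate --- and is now a genuine norm, by construction of the quotient. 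Since $L^p(\Omega,Y)$ is complete, it is a Banach lattice.

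The only substantive point is the one flagged above: that the pointwise absolute value --- equivalently, by the reduction, the pointwise supremum --- of a Bochner-measurable function is again Bochner-measurable. I expect this to be the main obstacle, handled cleanest by reducing $\lor$ to $|\cdot|$ via $2(u\lor v)=u+v+|u-v|$ and pushing measurability through approximating step functions using norm-continuity of $y\mapsto|y|_Y$. Everything else is a mechanical ``$\mu$-a.e.'' transfer of the corresponding property of the Banach lattice $Y$.
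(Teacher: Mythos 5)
Your proposal is correct and follows essentially the same route as the paper: both reduce clause (c) to showing $|f|\in\Lp^p(\Omega,Y)$ via the identity $2(u\lor v)=u+v+|u-v|$, establish Bochner-measurability of $|f|$ by applying the $1$-Lipschitz estimate $\bigl\||u|-|v|\bigr\|_Y\leq\|u-v\|_Y$ to an approximating sequence of step functions, and verify the Riesz semi-norm property from $\||f(\omega)|\|_Y=\|f(\omega)\|_Y$. The only divergence is the final quotient step, which the paper dispatches by citing Luxemburg--Zaanen, whereas you argue directly that the kernel is a solid order ideal so the order, lattice operations, and Riesz norm descend --- a slightly more self-contained but equivalent treatment.
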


\begin{proof}
To see that $(\Lp^p(\Omega,Y),\ll)$ is a Riesz space, we only need to check condition (c) from the definition because (a) and (b) are direct. Then, it suffices to verify that if $f$ belongs to $\Lp^p(\Omega,Y)$, we have that $|f| \in \Lp^p(\Omega,Y)$, where $|f|(\omega) = |f(\omega)|$ in $Y$.

First, let's check that $|f|$ is Bochner-measurable. Since $f$ is Bochner-measurable, there exists a sequence $(s_n)_n$ of step functions $s_n: \Omega \to Y$ such that
$$
\lim_{n \to \infty} \|f(\omega) - s_n(\omega)\|_Y = 0 \quad   \mu(\omega)\text{-a.e.}
$$
Since $\|\cdot\|_Y$ is a Riesz norm, by the triangle inequality,
$$
||f(\omega)|-|s_n(\omega)|| \preccurlyeq |f(\omega)-s_n(\omega)| \implies \||f(\omega)|-|s_n(\omega)|\|_Y \leq \|f(\omega)-s_n(\omega)\|_Y
$$
for every $\omega \in \Omega$. Therefore, noticing that $|s_n|$ is a step function as well, we have that
$$
\lim_{n \to \infty} \||f|(\omega) - |s_n|(\omega)\|_Y \leq \lim_{n \to \infty} \|f(\omega) - s_n(\omega)\|_Y = 0 \quad   \mu(\omega)\text{-a.e.},
$$
so $|f|$ is Bochner-measurable. Next, observing that $\||f(\omega)|\|_Y = \|f(\omega)\|_Y$ for every $\omega \in \Omega$, we obtain that $|f|$ belongs to $\Lp^p(\Omega,Y)$.

In order to see that $(\Lp^p(\Omega,Y),\|\cdot\|_{\Lp^p(\Omega,Y)},\ll)$ is a complete semi-normed Riesz space, we only need to verify that $\|\cdot\|_{\Lp^p(\Omega,Y)}$ is a Riesz semi-norm. Indeed, for $f,h \in \Lp^p(\Omega,Y)$,
\begin{align*}
|f| \ll |h| &   \iff |f(\omega)| = |f|(\omega) \preccurlyeq |h|(\omega) = |h(\omega)| \quad \mu(\omega)\text{-a.e.} \\
            &   \implies \||f|(\omega)\|_Y= \|f(\omega)\|_Y \leq \|h(\omega)\|_Y = \||h|(\omega)\|_Y \quad \mu(\omega)\text{-a.e.} \\
            &   \implies    \|f\|_{\Lp^p(\Omega,Y)} \leq \|h\|_{\Lp^p(\Omega,Y)}.
\end{align*}

Finally, to obtain that $(L^p(\Omega,Y),\|\cdot\|_{L^p(\Omega,Y)},\ll)$ is a Banach lattice, we appeal to \cite[Ch. 3, \S 18]{luxemburg1}.
\end{proof}

\subsection{Actions by measure-preserving transformations}

Given a $\sigma$-finite measure space $(\Omega, \mathcal{B}, \mu)$, we say that a countable discrete group $G$ {\bf acts by measure-preserving transformations}, denoted $G \acts (\Omega, \mu)$, if there is a measurable map $\alpha: G \times \Omega \to \Omega$ such that, $\mu(\omega)\text{-a.e.}$,
$$
1_G \cdot \omega = \omega \quad  \text{ and } \quad g \cdot (h \cdot \omega) = (gh) \cdot \omega \quad \text{ for all } g, h \in G,
$$
and each $g$ preserves $\mu$, i.e.,
$$
\mu(g \cdot A)=\mu(A) \quad \text{ for all } g \in G \text{ and } A \in \mathcal{B},
$$
where $g \cdot x$ denotes $\alpha(g,x)$ and $g \cdot A = \{g \cdot \omega: \omega \in A\}$. If $G \acts (\Omega, \mu)$, we will be particularly interested in the {\bf Koopman representation} $\koopman: G \to \Isom(L^p(\Omega, Y))$ for $1 \leq p \leq \infty$, given by
$$
\koopman(g) f (\omega) = f(g^{-1} \cdot \omega)  \quad   \text{ for } g \in G, \omega \in \Omega, \text{ and } f \in L^p(\Omega, Y).
$$
Notice that $\koopman$ is isometric, i.e., $C_\koopman = 1$. Indeed, given any $g \in G$ and $f \in L^p(\Omega, Y)$, if $1 \leq p < \infty$, we have
$$
\|\koopman(g) f\|_{L^p(\Omega, Y)}^p = \int_\Omega\|f(g^{-1} \cdot \omega)\|_Y^p d\mu(\omega) = \int_\Omega\|f(\omega)\|_Y^p d\mu(\omega) = \|f\|_{L^p(\Omega, Y)}^p,
$$
because $G$ acts by measure-preserving transformations, and if $p = \infty$, we have
$$
\|\koopman(g) f\|_{L^\infty(\Omega, Y)} = \mathrm{ess}\sup_{\omega \in \Omega} \|f(g^{-1} \cdot \omega)\|_Y = \mathrm{ess}\sup_{\omega \in g \cdot \Omega} \|f(\omega)\|_Y = \|f\|_{L^\infty(\Omega, Y)},
$$
because $G$ acts by almost surely invertible transformations.

\subsection{Set maps} \label{sec:set}

For a countable discrete group $G$ and a semi-normed vector space $(V,\|\cdot\|)$, a function of the form $\varphi\colon \FSet \to V$ will be called a {\bf set map}. Given $L \in V$, we write $\lim_{F \to G} \varphi(F) = L$ if for every $\epsilon > 0$, there exists $(K,\delta) \in \FSet \times \R_{>0}$ such that for every $(K, \delta)$-invariant set $F \in \FSet$ we have that $\|\varphi(F)-L\|<\epsilon$. In this case, we say that {\bf $\varphi(F)$ converges to $L$ as $F$ becomes more and more invariant}. The following result is a vector-valued version of \cite[Lemma 1.1]{bricenoiommi1}.

\begin{lemma}
\label{lem:limFolner}
Given a set map $\varphi\colon \FSet \to V$ and $L \in V$, the following are equivalent.
\begin{enumerate}
    \item[(i)] $\lim_{F \to G} \varphi(F) = L$.
    \item[(ii)] $\lim_{n \to \infty} \varphi(F_n) = L$ for every F{\o}lner sequence $(F_n)_n$.
\end{enumerate}
\end{lemma}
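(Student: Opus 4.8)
The plan is to follow the same two-step scheme as in the scalar case \cite[Lemma 1.1]{bricenoiommi1}; the vector-valued nature of $\varphi$ plays no role, since the argument only invokes the definition of $(K,\delta)$-invariance and elementary manipulations of finite sets.

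For (i) $\implies$ (ii), I would fix a F{\o}lner sequence $(F_n)_n$ and $\epsilon > 0$, and take $(K,\delta)$ witnessing (i). The key observation is that for a finite set $K = \{k_1,\dots,k_m\}$ one has $KF \,\Delta\, F \subseteq \bigcup_{i=1}^m (k_iF \,\Delta\, F)$: indeed $KF \setminus F = \bigcup_i (k_iF \setminus F)$, while $F \setminus KF \subseteq F \setminus k_1F \subseteq k_1F \,\Delta\, F$. Hence $\frac{|KF_n \Delta F_n|}{|F_n|} \leq \sum_{i=1}^m \frac{|k_iF_n \Delta F_n|}{|F_n|} \to 0$, so $F_n$ is $(K,\delta)$-invariant for all large $n$, which gives $\|\varphi(F_n)-L\| < \epsilon$ and therefore $\varphi(F_n) \to L$.

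For (ii) $\implies$ (i), I would argue by contraposition. If (i) fails, there is $\epsilon > 0$ such that for every $(K,\delta) \in \FSet \times \R_{>0}$ there is a $(K,\delta)$-invariant set $F$ with $\|\varphi(F)-L\| \geq \epsilon$. Enumerating $G = \{g_1,g_2,\dots\}$, set $K_n = \{g_1,g_1^{-1},\dots,g_n,g_n^{-1}\}$ and $\delta_n = 1/n$, and choose a $(K_n,\delta_n)$-invariant $F_n$ with $\|\varphi(F_n)-L\| \geq \epsilon$. Then $(F_n)_n$ is F{\o}lner: given $g = g_m$, for $n \geq m$ both $g$ and $g^{-1}$ lie in $K_n$, so $gF_n \setminus F_n \subseteq K_nF_n \setminus F_n$ and, applying $g^{-1}$, $|F_n \setminus gF_n| = |g^{-1}F_n \setminus F_n| \leq |K_nF_n \setminus F_n|$, whence $\frac{|gF_n \Delta F_n|}{|F_n|} \leq 2\delta_n \to 0$. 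Since $\|\varphi(F_n)-L\| \geq \epsilon$ for all $n$, statement (ii) fails, completing the contrapositive.

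The only mildly delicate point is the last one: one must take $K_n$ \emph{symmetric} so that $(K_n,\delta_n)$-invariance controls $|F_n \setminus gF_n|$ and not merely $|gF_n \setminus F_n|$; otherwise the constructed sequence need not be a (left) F{\o}lner sequence. Everything else is routine, and note that the sets $F_n$ above are furnished directly by the negation hypothesis, so no separate appeal to amenability is needed (if $G$ is not amenable, both (i) and (ii) hold vacuously).
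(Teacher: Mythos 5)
Your proof is correct and follows essentially the same two-step scheme as the paper's, merely filling in details the paper leaves implicit (that F{\o}lner sequences are eventually $(K,\delta)$-invariant, and that the sets produced by the negation of (i) along a cofinal sequence of parameters form a F{\o}lner sequence). One small remark: the symmetry of $K_n$ that you flag as essential is in fact not needed, since $|F_n \setminus gF_n| = |F_n| - |F_n \cap gF_n| = |gF_n| - |gF_n \cap F_n| = |gF_n \setminus F_n|$ for any single group element $g$; your symmetric choice is harmless but the claim that the sequence ``need not'' be F{\o}lner without it is not accurate.
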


\begin{proof}
Pick a F{\o}lner sequence $(F_n)_n$ and $\epsilon > 0$. If $\lim_{F \to G} \varphi(F) = L$, then there exists $(K,\delta) \in \FSet \times \R_{> 0}$ such that $\|\varphi(F)-L\| < \epsilon$ for every $(K,\delta)$-invariant set $F$. Let $n_0$ be such that $F_n$ is $(K,\delta)$-invariant for every $n \geq n_0$. Then, $\|\varphi(F_n)-L\| < \epsilon$ for every $n \geq n_0$, so $\lim_{n \to \infty} \varphi(F_n) = L$ and we have that $\text{(i)} \implies \text{(ii)}$.

To prove that $\text{(ii)} \implies \text{(i)}$, suppose that $\lim_{n \to \infty} \varphi(F_n) = L$ for every F{\o}lner sequence $(F_n)_n$ and, by contradiction, that there exists $\epsilon > 0$ such that for every $(K,\delta) \in \FSet \times \R_{> 0}$ there exists a $(K,\delta)$-invariant set $F$ such that $\|\varphi(F)-L\| \geq \epsilon$. Pick a cofinal sequence $(K_n,\delta_n)$ and, for each $n$, pick a $(K_n,\delta_n)$-invariant set $F_n$ such that $\|\varphi(F_n)-L\| \geq \epsilon$. Since $(F_n)_n$ is a F{\o}lner sequence, we conclude that $\lim_{n \to \infty} \varphi(F_n)$ is not $L$, which is a contradiction. Therefore, $\lim_{F \to G} \varphi(F) = L$.
\end{proof}

For a real-valued set map $\varphi\colon \FSet \to \R$, we also define its {\bf limit superior} as
$$
\limsup_{F \to G} \varphi(F) := \inf_{(K,\delta)} \sup\{\varphi(F): F \textrm{ is } (K,\delta)\textrm{-invariant}\}.
$$

\section{Space of set maps and weak forms of additivity} \label{sec:space-maps}

Throughout this paper, $G$ will be a countable discrete amenable group, $(V,\|\cdot\|)$ a complete semi-normed vector space, $(V,\|\cdot\|,\ll)$ a complete semi-normed Riesz space, and $\pi\colon G \to \Isom(V)$ a uniformly bounded representation of $G$. We stress that a complete semi-normed Riesz space $(V,\|\cdot\|,\ll)$ can be always thought of just a complete semi-normed vector space $(V,\|\cdot\|)$.

In this setting, there are natural left actions $G \acts V$ and $G \acts \FSet$ given by $g \cdot v = \pi(g)v$ and $g \cdot F = Fg^{-1}$, respectively. We say that a set map $\varphi\colon \FSet \to V$ is {\bf $G$-equivariant} if
$$
g \cdot \varphi(F) = \varphi(g \cdot F) \quad \text{ for every $g \in G$ and $F \in \FSet$}.
$$
We also define the semi-norms
$$
\vertsup{\varphi} := \sup_{F \in \FSet}\left\|\frac{\varphi(F)}{|F|}\right\| \quad \text{ and } \quad 
\vertG{\varphi} := \limsup_{F \to G}\left\|\frac{\varphi(F)}{|F|}\right\|,
$$
and say that $\varphi$ is {\bf bounded} if $\vertsup{\varphi} < \infty$. Consider the {\bf space of set maps}
$$
\Maps := \{\varphi\colon \FSet \to V \mid \varphi \text{ is bounded and $G$-equivariant}\}.
$$
Note that $\|\varphi(g \cdot F)\| \leq \vertsup{\varphi}|F|$ for every $\varphi \in \Maps$, $g \in G$, and $F \in \FSet$. We will be particularly interested in the set
$$
\NMaps = \{\varphi \in \Maps: \vertG{\varphi} = 0\},
$$
i.e., the kernel of $\vertG{\cdot}$, that will be called the {\bf asymptotical kernel (of $\pi$)}. The set $\NMaps$ is a $\vertsup{\cdot}$-closed subspace of $\Maps$ (see \cite[Remark 2]{bricenoiommi1}).

\subsection{Order and completeness}

If  $(V,\ll)$ Riesz space, there is a a natural way to induce a partial order $\lll$ in $\Maps$ by declaring, for any $\varphi,\psi \in \Maps$,
$$
\varphi \lll \psi \iff \varphi(F) \ll \psi(F) \quad \text{for every } F \in \FSet.
$$

\begin{proposition}
\label{prop:riesz}
If  $(V,\ll)$ is a Riesz space, then both $(\Maps,\lll)$ and $(\NMaps,\lll)$ are Riesz spaces.
\end{proposition}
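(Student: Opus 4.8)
The plan is to view $\Maps$ and $\NMaps$ as subspaces of the space $V^{\FSet}$ of \emph{all} maps $\FSet\to V$, which is a Riesz space for the pointwise order, with lattice operations computed coordinatewise: $(\varphi\lor\psi)(F)=\varphi(F)\lor\psi(F)$. Since $\lll$ on $\Maps$ (and its restriction to $\NMaps$) is precisely the restriction of this pointwise order, and both $\Maps$ and $\NMaps$ are linear subspaces, axioms (a) and (b) of a Riesz space are inherited at once from $V$; only axiom (c) requires work. By the identity $2(\varphi\lor\psi)=\varphi+\psi+|\varphi-\psi|$ — valid pointwise in $V$ — together with linearity, it is enough, exactly as in the remark following the list of Riesz space properties, to show that for every $\varphi\in\Maps$ the pointwise absolute value $|\varphi|$, defined by $|\varphi|(F):=|\varphi(F)|$, again belongs to $\Maps$, and likewise that $|\varphi|\in\NMaps$ whenever $\varphi\in\NMaps$. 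Such a $|\varphi|$ is then automatically the least upper bound of $\varphi$ and $-\varphi$ inside $\Maps$ (resp. $\NMaps$): any member of $\Maps$ dominating both $\varphi$ and $-\varphi$ pointwise dominates $|\varphi(F)|$ pointwise, by the least upper bound property in $V$.

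First I would verify $|\varphi|\in\Maps$. Boundedness is immediate from the Riesz semi-norm: since $|\alpha w|=\alpha|w|$ for $\alpha>0$ and $\|\,|w|\,\|=\|w\|$ in $V$, one gets $\bigl\|\,|\varphi|(F)/|F|\,\bigr\|=\bigl\|\,\bigl|\varphi(F)/|F|\bigr|\,\bigr\|=\|\varphi(F)/|F|\|$ for every $F\in\FSet$, so $\vertsup{|\varphi|}=\vertsup{\varphi}<\infty$. For $G$-equivariance I would use that each $\pi(g)\in\Isom(V)$ acts as a Riesz automorphism of $V$ — both $\pi(g)$ and its inverse $\pi(g^{-1})$ preserve $\ll$ — hence commutes with the lattice operations; in particular $\pi(g)|v|=|\pi(g)v|$, and therefore $|\varphi|(g\cdot F)=|\varphi(g\cdot F)|=|\pi(g)\varphi(F)|=\pi(g)|\varphi(F)|=g\cdot|\varphi|(F)$. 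Thus $|\varphi|\in\Maps$, and $(\Maps,\lll)$ is a Riesz space.

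For $\NMaps$ the same identity $\bigl\|\,|\varphi|(F)/|F|\,\bigr\|=\|\varphi(F)/|F|\|$ gives $\vertG{|\varphi|}=\vertG{\varphi}$, so $|\varphi|\in\NMaps$ as soon as $\varphi\in\NMaps$, and $(\NMaps,\lll)$ is a Riesz space as well. The only non-routine ingredient is the $G$-equivariance of $|\varphi|$, which is exactly where one needs the representation $\pi$ to respect the order structure of $V$ — equivalently, that each $\pi(g)$ is a lattice homomorphism; granting that, everything else is a direct application of the Riesz semi-norm inequalities already recorded.
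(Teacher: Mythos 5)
Your proof is correct and follows essentially the same route as the paper's: the lattice operations are defined pointwise and one checks that $\Maps$ and $\NMaps$ are closed under them, the only cosmetic difference being that you reduce to the absolute value via $2(\varphi\lor\psi)=\varphi+\psi+|\varphi-\psi|$ and obtain the exact equalities $\vertsup{|\varphi|}=\vertsup{\varphi}$ and $\vertG{|\varphi|}=\vertG{\varphi}$, where the paper works with $\varphi\lor\psi$ directly and bounds $\vertsup{\varphi\lor\psi}$ and $\vertG{\varphi\lor\psi}$ subadditively. You also make explicit the one point the paper leaves tacit, namely that $G$-equivariance of the pointwise lattice operations requires each $\pi(g)$ to commute with them (i.e.\ to be a lattice isomorphism), an assumption not contained in the paper's definition of $\Isom(V)$ as bounded invertible linear operators but equally needed for the paper's own step $g \cdot (\varphi \lor \psi)(F) = (\varphi \lor \psi)(g \cdot F)$.
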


\begin{proof}

Let $\varphi,\psi,\chi \in \Maps$. It is routine to check that (a) if $\varphi \lll \psi$, then $\varphi + \chi \lll \psi + \chi$, (b) if $0 \lll \varphi$ and $0 \leq \alpha$, then $0 \lll \alpha\varphi$, and (c) if we define $\varphi \lor \psi$ as the set map given by $(\varphi \lor \psi)(F) = \varphi(F) \lor \psi(F)$ for $F \in \FSet$, we have that, for any $g \in G$,
$$
\vertsup{\varphi \lor \psi} \leq \vertsup{\varphi} + \vertsup{\psi} < \infty \quad \text{ and } \quad g \cdot (\varphi \lor \psi)(F) = (\varphi \lor \psi)(g \cdot F),
$$
so $\varphi \lor \psi \in \Maps$. Moreover,
\begin{itemize}
\item $\varphi \lll \varphi \lor \psi$, since $\varphi(F) \ll \varphi(F) \lor \psi(F)$ for all $F \in \FSet$, so $\varphi \lor \psi$ is an upper bound for $\varphi$ and, by symmetry, the same holds for $\psi$;
\item if $\varphi,\psi \lll \chi$, then $\varphi(F),\psi(F) \lll \chi(F)$ for all $F \in \FSet$. Then, $\varphi(F) \lor \psi(F) \lll \chi(F)$ for all $F$, so $(\varphi \lor \psi)(F) \lll \chi(F)$ for all $F$, and we conclude that $\varphi \lor \psi \lll \chi$, i.e., $\varphi \lor \psi$ is the least upper bound.
\end{itemize}
Therefore, $(\Maps,\lll)$ is a Riesz space. To see that the subpace $\NMaps$ induces a Riesz space $(\NMaps,\lll)$, we only need to check that if $\varphi,\psi \in \NMaps$, then $\varphi \lor \psi \in \NMaps$. Indeed, since
$$
|(\varphi \lor \psi)(F)| = |\varphi(F) \lor \psi(F)| \ll |\varphi(F)| \lor |\psi(F)| \ll |\varphi(F)| + |\psi(F)|
$$
for every $F \in \FSet$, it follows that
$$
\left\|(\varphi \lor \psi)(F)\right\| = \left\||(\varphi \lor \psi)(F)|\right\| \leq \||\varphi(F)| + |\psi(F)|\| \leq \|\varphi(F)\| + \|\psi(F)\|,
$$
so
$$
\vertG{\varphi \lor \psi} \leq \vertG{\varphi} + \vertG{\psi} = 0 + 0 = 0,
$$
and we conclude.
\end{proof}

In \cite{bricenoiommi1}, it was proven that if $(V,\|\cdot\|)$ is a Banach space (resp. complete semi-normed space), then $(\Maps,\vertsup{\cdot})$ is a Banach space (resp. complete semi-normed space) \cite[Proposition 2.1]{bricenoiommi1} and $(\Maps,\vertG{\cdot})$ is a complete semi-normed space \cite[Proposition 3.3]{bricenoiommi1}. Here we prove a lattice version of this result.

\begin{proposition}
\label{prop:riesz-space}
If $(V,\|\cdot\|,\ll)$ is a complete semi-normed Riesz space, then we have that both $(\Maps,\vertsup{\cdot},\lll)$ and $(\Maps,\vertG{\cdot},\lll)$ are complete semi-normed Riesz spaces. Moreover, if $(V,\|\cdot\|,\ll)$ is a Banach lattice, then $(\Maps,\vertsup{\cdot},\lll)$ is a Banach lattice as well.
\end{proposition}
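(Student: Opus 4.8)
The plan is to leverage everything already in place: by \cite[Proposition 2.1]{bricenoiommi1} the space $(\Maps,\vertsup{\cdot})$ is a complete semi-normed space (a Banach space when $V$ is Banach), by \cite[Proposition 3.3]{bricenoiommi1} the space $(\Maps,\vertG{\cdot})$ is a complete semi-normed space, and by Proposition \ref{prop:riesz} the pair $(\Maps,\lll)$ is a Riesz space in which the lattice operations are computed \emph{pointwise}, i.e. $(\varphi\lor\psi)(F)=\varphi(F)\lor\psi(F)$ and hence $|\varphi|(F)=|\varphi(F)|$ for every $F\in\FSet$. Thus the only genuinely new thing to check is that $\vertsup{\cdot}$ and $\vertG{\cdot}$ are \emph{Riesz} semi-norms for $\lll$.

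For this, suppose $\varphi,\psi\in\Maps$ satisfy $|\varphi|\lll|\psi|$. By the pointwise description of the absolute value we get $|\varphi(F)|=|\varphi|(F)\ll|\psi|(F)=|\psi(F)|$ in $V$ for every $F\in\FSet$, and since $\|\cdot\|$ is a Riesz semi-norm on $V$ this yields $\|\varphi(F)\|\leq\|\psi(F)\|$, hence $\left\|\varphi(F)/|F|\right\|\leq\left\|\psi(F)/|F|\right\|$ for every $F$. Taking the supremum over $F\in\FSet$ gives $\vertsup{\varphi}\leq\vertsup{\psi}$, and taking the limit superior as $F\to G$ gives $\vertG{\varphi}\leq\vertG{\psi}$. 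So both semi-norms are Riesz semi-norms, and combined with the completeness and the Riesz-space structure recalled above, $(\Maps,\vertsup{\cdot},\lll)$ and $(\Maps,\vertG{\cdot},\lll)$ are complete semi-normed Riesz spaces.

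For the last assertion, if $\|\cdot\|$ is a norm on $V$, then $(\Maps,\vertsup{\cdot})$ is a Banach space by \cite[Proposition 2.1]{bricenoiommi1}, so $\vertsup{\cdot}$ is a complete norm; together with the fact, just established, that it is a Riesz semi-norm for $\lll$, this makes $(\Maps,\vertsup{\cdot},\lll)$ a Banach lattice. I would also remark that the conclusion cannot be upgraded for $\vertG{\cdot}$: even when $V$ is a Banach lattice, $\vertG{\varphi}=0$ does not force $\varphi=0$ (indeed $\NMaps\neq\{0\}$ in general), so $\vertG{\cdot}$ is only a semi-norm.

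I do not expect a real obstacle here; the argument is essentially bookkeeping. The one point to state carefully is the identity $|\varphi|(F)=|\varphi(F)|$, i.e. that the modulus in $\Maps$ is the pointwise modulus, which follows from the explicit formula for $\lor$ in the proof of Proposition \ref{prop:riesz} together with the Riesz identity $2(u\lor v)=u+v+|u-v|$.
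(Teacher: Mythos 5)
Your proposal is correct and follows essentially the same route as the paper: cite the earlier completeness results and Proposition \ref{prop:riesz}, then verify that $\vertsup{\cdot}$ and $\vertG{\cdot}$ are Riesz semi-norms via the pointwise identity $|\varphi|(F)=|\varphi(F)|$ and the Riesz semi-norm property of $\|\cdot\|$ on $V$. Your explicit justification of the pointwise modulus identity is a nice touch that the paper leaves implicit, but the argument is the same.
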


\begin{proof}
By Proposition \ref{prop:riesz}, $(\Maps,\lll)$ and $(\NMaps,\lll)$ are Riesz spaces. By \cite[Proposition 2.1]{bricenoiommi1}, $(\Maps,\vertsup{\cdot})$ is a complete semi-normed space (and a Banach space if $(V,\|\cdot\|$ is a Banach space) and by \cite[Proposition 3.3]{bricenoiommi1}, $(\Maps,\vertG{\cdot})$ is a complete semi-normed space. Therefore, it suffices to check that both $\vertsup{\cdot}$ and $\vertG{\cdot}$ are Riesz semi-norms. Indeed, if $|\varphi| \lll |\psi|$, then $|\varphi(F)| \ll |\psi(F)|$ for every $F \in \FSet$, so $\left\|\varphi(F)\right\| \leq \left\|\varphi(F)\right\|$ for every $F$, as well. Then,
$$
\vertsup{\varphi} = \sup_{F \in \FSet} \left\|\frac{\varphi(F)}{|F|}\right\| \leq  \sup_{F \in \FSet} \left\|\frac{\psi(F)}{|F|}\right\|  =  \vertsup{\psi},
$$
and
$$
\vertG{\varphi} = \limsup_{F \to G} \left\|\frac{\varphi(F)}{|F|}\right\| \leq   \limsup_{F \to G} \left\|\frac{\psi(F)}{|F|}\right\|  =  \vertG{\psi},
$$
as desired.
\end{proof}

We will be interested in the Riesz subspace of $(\Maps,\lll)$ induced by the asymptotical kernel $\NMaps$ and the corresponding positive cone $\NMaps^+$, that will play a key role in this work.

\begin{proposition}
If $(V,\|\cdot\|,\ll)$ is a complete semi-normed Riesz space, then we have that $(\NMaps,\lll)$ is a Riesz subspace of $(\Maps,\lll)$.
\end{proposition}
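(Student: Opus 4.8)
The plan is to observe that nearly everything required has already been verified inside the proof of Proposition~\ref{prop:riesz}. Recall that being a \emph{Riesz subspace} of $(\Maps,\lll)$ means two things: first, that $\NMaps$ is a linear subspace of $\Maps$; and second, that the lattice operations on $(\NMaps,\lll)$ — which exist by Proposition~\ref{prop:riesz} — coincide with the restrictions to $\NMaps$ of the lattice operations on $(\Maps,\lll)$. In other words, one must check that $\NMaps$ is closed under $\lor$ and $\land$ as computed in $\Maps$, so that passing to the subspace creates no new suprema. The first point is immediate, since $\NMaps$ is by definition the kernel of the semi-norm $\vertG{\cdot}$ and hence closed under addition and scalar multiplication.

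For the second point, the key step is to show that $\NMaps$ is closed under the join $\varphi\lor\psi$ computed in $\Maps$, i.e. the set map $F\mapsto \varphi(F)\lor\psi(F)$. This is exactly the estimate already carried out at the end of the proof of Proposition~\ref{prop:riesz}: for $\varphi,\psi\in\NMaps$ one has $|(\varphi\lor\psi)(F)|\ll|\varphi(F)|+|\psi(F)|$ in $V$ for every $F\in\FSet$, so by the Riesz semi-norm property $\|(\varphi\lor\psi)(F)\|\le\|\varphi(F)\|+\|\psi(F)\|$; dividing by $|F|$ and taking $\limsup_{F\to G}$ yields $\vertG{\varphi\lor\psi}\le\vertG{\varphi}+\vertG{\psi}=0$, so $\varphi\lor\psi\in\NMaps$. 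Since this element is an upper bound for $\varphi$ and $\psi$ in $\NMaps$, and any upper bound in $\NMaps$ is in particular an upper bound in $\Maps$ (hence dominates $\varphi\lor\psi$), it is also the least upper bound in $\NMaps$; thus the joins in $\NMaps$ and in $\Maps$ agree. Finally, using $\varphi\land\psi=-((-\varphi)\lor(-\psi))$ together with the fact that $\NMaps$ is closed under negation and under $\lor$, we conclude that $\NMaps$ is closed under $\land$ as well, and these meets again coincide with those of $\Maps$.

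I do not anticipate a genuine obstacle: the proposition is essentially a repackaging of facts established in Propositions~\ref{prop:riesz} and~\ref{prop:riesz-space}. The only subtlety worth flagging explicitly is the conceptual distinction between ``$(\NMaps,\lll)$ is a Riesz space in its own right'' and ``$(\NMaps,\lll)$ is a Riesz \emph{sub}space of $(\Maps,\lll)$'': the latter is the stronger assertion that the inclusion $\NMaps\hookrightarrow\Maps$ is a lattice homomorphism, which is precisely what the closure-under-$\lor$ argument above secures.
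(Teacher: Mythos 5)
Your proof is correct and follows essentially the same route as the paper: the paper's own argument is simply the one-line observation that $\NMaps$ is closed under $\lor$ because $\vertG{\varphi \lor \psi} \leq \vertG{\varphi} + \vertG{\psi} = 0$, using the estimate $|(\varphi\lor\psi)(F)| \ll |\varphi(F)| + |\psi(F)|$ already established in the proof of Proposition~\ref{prop:riesz}. Your additional remarks on the linear-subspace property, the agreement of joins in $\NMaps$ and $\Maps$, and closure under $\land$ via $\varphi\land\psi=-((-\varphi)\lor(-\psi))$ are correct elaborations of what the paper leaves implicit.
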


\begin{proof}
We only need to check that if $\varphi,\psi \in \NMaps$, then $\varphi \lor \psi \in \NMaps$. Indeed,
$$
\vertG{\varphi \lor \psi} \leq \vertG{\varphi} + \vertG{\psi} = 0 + 0 = 0.
$$
\end{proof}

\subsection{Additive set maps}

A set map $\varphi \in \Maps$ will be called {\bf additive} if
$$
\varphi(E \sqcup F) = \varphi(E) + \varphi(F)
$$
for every pair of disjoint sets $E,F \in \FSet$ or, equivalently, if
$$
\varphi(F) = \sum_{E \in \Part} \varphi(E)
$$
for every $F \in \FSet$ and for every partition $\Part$ of $F$. We will denote by $\AdMaps$ the set of additive set maps in $\Maps$. The set $\AdMaps$ is a $\vertsup{\cdot}$-closed subspace of $\Maps$ \cite[Proposition 2.7]{bricenoiommi1}.

\subsection{Asymptotically additive set maps and additive realizations}

A set map $\varphi \in \Maps$ is {\bf asymptotically additive} if
$$
\inf_{\psi \in \AdMaps} \vertG{\varphi - \psi} = 0,
$$
i.e., if it belongs to the closure of $\AdMaps$ with respect to the semi-norm $\vertG{\cdot}$.

\subsection{Almost additive set maps} \label{sec:almost-a}

Let $(\R,|\cdot|,\leq)$ be the Banach lattice of real numbers and consider the trivial representation $\pi_\R: G \to \Isom(\R)$ given by $\pi_\R(g) = \mathrm{id}_\R$. Let $\Maps_\R$ be the space of set maps induced by $\pi_\R$, namely
$$
\Maps_\R = \{b\colon \FSet \to \R \mid b \text{ is bounded and $G$-equivariant}\}.
$$
Since $\pi_\R$ is trivial, here $G$-equivariance translate into $G$-invariance, that is,
$$b(F) = \pi_\R(g) b(F) = b(g \cdot F) \quad \text{ for every } F \in \FSet.
$$
We denote by $\NMaps_\R$ the corresponding asymptotical kernel subspace. 

We say that a partition $\Part$ of a set $F \in \FSet$ is \textbf{monotone invariant} if for every $(K,\delta) \in \FSet \times \R_{> 0}$ and every $E \in \Part$,
$$
E \text{ is } (K,\delta)\text{-invariant} \implies F \text{ is } (K,\delta)\text{-invariant}.
$$

Given $b \in \NMaps_\R$, we say that a set map $\varphi \in \Maps$ is {\bf $b$-almost additive} if
$$
\left\|\varphi(F)-\sum_{E \in \mathcal{P}} \varphi(E)\right\| \leq \sum_{E \in \mathcal{P}} b(E),
$$
for every $F \in \FSet$ and every monotone invariant partition $\Part$ of $F$. Since $\NMaps_\R$ is a Riesz space, without loss of generality we can assume that $b \in \NMaps^+_\R$, as $b \leq |b|$. If $\varphi \in \Maps$ is $b$-almost additive for some $b \in \NMaps_\R$, we say that $\varphi$ is {\bf almost additive}.

\subsection{Riesz-almost additive set maps} \label{sec:aa_riesz}

If $(V,\|\cdot\|,\ll)$ is a complete semi-normed Riesz space, consider the complete semi-normed Riesz space $(\Maps,\vertsup{\cdot},\lll)$ (see Proposition \ref{prop:riesz-space}). Given $\nmap \in \NMaps$, we say that a set map $\varphi \in \Maps$ is {\bf $\nmap$-Riesz-almost additive} if
$$
\left|\varphi(F) - \sum_{E \in \Part} \varphi(E)\right| \ll \sum_{E \in \Part} \nmap(E),
$$
for every $F \in \FSet$ and every monotone invariant partition $\Part$ of $F$. As for almost additive set maps, and since $\NMaps$ is a Riesz space, we can assume that $\nmap \in \NMaps^+$. If $\varphi \in \Maps$ is $\nmap$-almost additive for some $\nmap \in \NMaps$, we say that $\varphi$ is {\bf Riesz-almost additive}. Notice that, if $\nmap \in \NMaps$, then $\|\nmap(\cdot)\| \in \NMaps_\R^+$.

\section{Relations between forms of additivity}

In this section, we aim at a complete characterization of the relations among weak notions of additivity introduced in \S \ref{sec:space-maps}. A common theme in all these notions is the existence of an error that vanishes at infinity and measures how far apart from being additive a given set map is. Different ways of capturing this idea, led to different weak notions of additivity.

\subsection{Asymptotical additivity implies existence of additive realization}
\label{subsection:coboundaries}

Consider the subspace of {\bf coboundaries}
$$
\Cob := \left<\{v-\pi(g)v: v \in V, g \in G\}\right> \subseteq V,
$$
and that of {\bf weak coboundaries} $\wCob$, where the closure is taken with respect to the semi-norm $\|\cdot\|$. The space of weak coboundaries was characterized in \cite{bricenoiommi1}.

\begin{proposition}[{\cite[Corollary 2.5]{bricenoiommi1}}]
\label{cor:cob}
Given $v \in V$,  we have that $v \in \wCob$ if and only if $\limsup_{F \to G} \left\|A_F v\right\| = 0$.
\end{proposition}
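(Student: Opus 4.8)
The plan is to prove the two implications separately; the common engine is the elementary computation expressing the ergodic average of a coboundary as a boundary term. For the forward implication I would first treat a single generator $u = w - \pi(g)w$ of $\Cob$. Since $\pi(h^{-1})\pi(g)w = \pi\bigl((g^{-1}h)^{-1}\bigr)w$, re-indexing gives $S_F(\pi(g)w) = S_{g^{-1}F}(w)$, hence
$$
A_F\bigl(w - \pi(g)w\bigr) \;=\; \frac{1}{|F|}\sum_{h \in F \setminus g^{-1}F}\pi(h^{-1})w \;-\; \frac{1}{|F|}\sum_{h \in g^{-1}F \setminus F}\pi(h^{-1})w,
$$
so that $\bigl\|A_F(w - \pi(g)w)\bigr\| \le C_\pi\|w\|\,|F \Delta g^{-1}F|/|F|$. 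By linearity, $\|A_F u\| \le C(u)\max_i |F \Delta g_i^{-1}F|/|F|$ for every $u \in \Cob$, where $g_1,\dots,g_k$ are the group elements occurring in a fixed expression for $u$; since $|F \Delta g^{-1}F|/|F| \to 0$ as $F \to G$ — equivalently, along every F{\o}lner sequence — this gives $\lim_{F\to G}\|A_F u\| = 0$ for all $u \in \Cob$. To pass to $v \in \wCob$, fix $\epsilon > 0$, choose $u \in \Cob$ with $\|v - u\| < \epsilon$, and use $\|A_F\|_{\mathrm{op}} \le C_\pi$ to bound $\|A_F v\| \le C_\pi\epsilon + \|A_F u\|$; taking $\limsup_{F\to G}$ and then $\epsilon \to 0$ yields $\limsup_{F\to G}\|A_F v\| = 0$.

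For the reverse implication the argument is nearly immediate once one observes that, for every $F \in \FSet$,
$$
v - A_F v \;=\; \frac{1}{|F|}\sum_{g \in F}\bigl(v - \pi(g^{-1})v\bigr) \;\in\; \Cob,
$$
being a finite linear combination of generators of $\Cob$. Assume $\limsup_{F\to G}\|A_F v\| = 0$; since $\|A_F v\| \ge 0$, this forces $\lim_{F\to G}\|A_F v\| = 0$, so by Lemma~\ref{lem:limFolner} we have $\|A_{F_n}v\| \to 0$ along any F{\o}lner sequence $(F_n)_n$, which exists because $G$ is amenable. Then the coboundaries $v - A_{F_n}v$ satisfy $\|v - (v - A_{F_n}v)\| = \|A_{F_n}v\| \to 0$, so $v$ lies in the $\|\cdot\|$-closure of $\Cob$, that is, $v \in \wCob$.

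I do not expect a serious obstacle: the only step requiring any care is the boundary estimate in the forward direction, and even that is routine — one can bypass the explicit $(K,\delta)$-bookkeeping by fixing a F{\o}lner sequence and applying Lemma~\ref{lem:limFolner}. The one point worth recording explicitly is that, for a nonnegative real-valued set map, $\limsup_{F\to G}$ vanishes if and only if the corresponding $\lim_{F\to G}$ vanishes, which is what licenses the use of Lemma~\ref{lem:limFolner} for $\|A_F v\|$ in the reverse direction.
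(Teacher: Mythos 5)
Your argument is correct and complete. Note, however, that the paper itself offers no proof of this statement: it is imported verbatim as \cite[Corollary 2.5]{bricenoiommi1}, so there is no internal proof to compare against. Your two directions are the standard ones and both check out: the identity $S_F(\pi(g)w)=S_{g^{-1}F}(w)$ gives $\|A_F(w-\pi(g)w)\|\le C_\pi\|w\|\,|F\Delta g^{-1}F|/|F|$, which vanishes as $F\to G$ (take $K=\{1_G,g,g^{-1}\}$ in the invariance condition), and the uniform bound $\|A_F\|_{\mathrm{op}}\le C_\pi$ lets you pass to the closure; conversely $v-A_Fv=\frac{1}{|F|}\sum_{g\in F}(v-\pi(g^{-1})v)\in\Cob$ together with amenability (existence of $(K,\delta)$-invariant sets) puts $v$ in $\wCob$. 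Your observation that, for a nonnegative set map, vanishing of $\limsup_{F\to G}$ is the same as vanishing of $\lim_{F\to G}$ is the right justification for invoking Lemma~\ref{lem:limFolner}, though in the reverse direction you could skip the F{\o}lner detour entirely and just extract a single sufficiently invariant $F$ for each $\epsilon$. It is also worth noting that the statement follows in one line from the quantitative estimate recorded as Proposition~\ref{cor:LW}: $v\in\wCob$ iff $\|v+\overline{\Cob}\|_{\overline{\Cob}}=0$, and that quantity is sandwiched between $C_\pi^{-1}\limsup_{F\to G}\|A_Fv\|$ and $\limsup_{F\to G}\|A_Fv\|$; this is presumably how the cited source derives it, whereas your proof is direct and self-contained.
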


Consider the quotient space $V/\overline{\Cob} = \{v + \overline{\Cob}: v \in V\}$. The quotient topology in $V/\overline{\Cob}$ is given by the semi-norm
$$
\|v + \overline{\Cob}\|_{\overline{\Cob}} = \inf_{u \in \overline{\Cob}} \|v+u\|.
$$
It turns out that $(V/\overline{\Cob}, \|\cdot\|_{\overline{\Cob}})$ is a Banach space \cite[Corollary 3.2]{bricenoiommi1}. The following is a particular case of \cite[Corollary 2.6]{bricenoiommi1}.

\begin{proposition}
\label{cor:LW}
For every $v \in V$,
$$
C_\pi^{-1}\limsup_{F \to G}\left\|A_F v\right\| \leq  \|w+\overline{\Cob}\|_{\overline{\Cob}} \leq \inf_{F \in \mathcal{F}(G)}\left\|A_F v\right\| \leq \limsup_{F \to G}\left\|A_F v\right\|.
$$
If, in addition, $\pi$ is isometric, then for every $v \in V$,
$$
\|w+\overline{\Cob}\|_{\overline{\Cob}} = \limsup_{F \to G} \left\|A_F v\right\| =  \inf_{F \in \mathcal{F}(G)} \left\|A_F v\right\|.
$$
\end{proposition}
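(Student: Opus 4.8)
The plan is to establish the four-term chain link by link, the engine throughout being the elementary identity
\[
v - A_F v = \frac{1}{|F|}\sum_{g \in F}\bigl(v - \pi(g^{-1})v\bigr) \in \Cob,
\]
valid for every $F \in \FSet$, which says that $v$ and $A_F v$ determine the same class in $V/\overline{\Cob}$. The middle inequality $\inf_{F \in \FSet}\|A_F v\| \le \limsup_{F \to G}\|A_F v\|$ is then immediate from unwinding $\limsup_{F\to G}$ as $\inf_{(K,\delta)}\sup\{\,\cdot\,\}$: for each $(K,\delta)$ there is a $(K,\delta)$-invariant set $F$, and $\inf_{F'}\|A_{F'}v\| \le \|A_F v\|$ lies below the relevant supremum, so one takes the infimum over $(K,\delta)$. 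For the upper bound $\|v + \overline{\Cob}\|_{\overline{\Cob}} \le \inf_{F}\|A_F v\|$, I would use the identity above to write $v + \overline{\Cob} = A_F v + \overline{\Cob}$, whence $\|v + \overline{\Cob}\|_{\overline{\Cob}} \le \|A_F v\|$ directly from the definition of the quotient seminorm, and then take the infimum over $F$.

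For the lower bound $C_\pi^{-1}\limsup_{F\to G}\|A_F v\| \le \|v + \overline{\Cob}\|_{\overline{\Cob}}$, fix an arbitrary $u \in \overline{\Cob}$. Since $A_F$ is an average of operators each of operator norm at most $C_\pi$, we have $\|A_F\|_{\mathrm{op}} \le C_\pi$, so $\|A_F(v+u)\| \le C_\pi\|v+u\|$ for all $F$. Now $\|A_F v\| \le \|A_F(v+u)\| + \|A_F u\|$, and by Proposition \ref{cor:cob} (the characterization of weak coboundaries) $\limsup_{F\to G}\|A_F u\| = 0$ because $u \in \overline{\Cob} = \wCob$. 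Subadditivity of $\limsup_{F\to G}$ then gives $\limsup_{F\to G}\|A_F v\| \le C_\pi\|v+u\|$, and taking the infimum over $u \in \overline{\Cob}$ finishes the inequality. When $\pi$ is isometric, i.e.\ $C_\pi = 1$, the two outer terms both equal $\limsup_{F\to G}\|A_F v\|$, so the squeeze forces all four quantities to coincide.

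All four steps are short, so I do not expect a genuine obstacle; the only point demanding a little care is the lower bound, where one must remember that $\limsup_{F\to G}$ is merely subadditive (not additive) and therefore lean on the fact that the error term $\|A_F u\|$ actually vanishes in the limit rather than on any exact cancellation. In essence the whole argument is bookkeeping around the congruence $v \equiv A_F v \pmod{\overline{\Cob}}$ together with the uniform operator bound $\|A_F\|_{\mathrm{op}} \le C_\pi$.
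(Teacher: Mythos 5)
Your argument is correct and complete: the congruence $v \equiv A_F v \pmod{\Cob}$ gives the upper bound by the quotient seminorm, amenability gives the middle inequality, and the lower bound correctly combines $\|A_F\|_{\mathrm{op}} \leq C_\pi$ with Proposition~\ref{cor:cob} (so that the error $\|A_F u\|$ for $u \in \wCob$ vanishes in the $\limsup$, which is all that subadditivity requires). The paper does not reprove this statement — it is quoted as a special case of \cite[Corollary 2.6]{bricenoiommi1} — but your proof is the natural one and matches what that reference does.
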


It turns out that every asymptotically additive set map can be realized as an additive set map. This was proven in \cite[Theorem 1]{bricenoiommi1}.

\begin{theorem}[{\cite[Theorem 1]{bricenoiommi1}}]
\label{thm:cuneo}
If $\varphi \in \Maps$ is asymptotically additive, then there exists an additive set map $\psi \in \Maps$ such that
$$
\vertG{\varphi - \psi} = 0.
$$
\end{theorem}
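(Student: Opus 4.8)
The plan is to convert the hypothesis --- an approximation statement about ergodic averages --- into a convergence statement in the Banach space $V/\overline{\Cob}$, exploiting that additive set maps are parametrized by single vectors of $V$.

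\emph{Step 1 (the dictionary $\AdMaps \leftrightarrow V$).} First I would note that, by additivity together with $G$-equivariance, every $\psi \in \AdMaps$ satisfies $\psi(\{g\}) = \pi(g^{-1})\psi(\{1_G\})$, so $\psi(F) = \sum_{g \in F}\pi(g^{-1})v = S_F v$ with $v := \psi(\{1_G\})$; conversely, for each $v \in V$ the set map $\psi_v$ defined by $\psi_v(F) = S_F v$ is additive, $G$-equivariant, and bounded, since $\vertsup{\psi_v} = \sup_{F \in \FSet}\|A_F v\| \le C_\pi\|v\| < \infty$. Under this correspondence $\vertG{\varphi - \psi_v} = \limsup_{F \to G}\|\varphi(F)/|F| - A_F v\|$, so the hypothesis that $\varphi$ is asymptotically additive reads $\inf_{v \in V}\limsup_{F \to G}\|\varphi(F)/|F| - A_F v\| = 0$.

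\emph{Step 2 (a Cauchy sequence, then the realization).} Choose $\epsilon_n > 0$ with $\epsilon_n \to 0$ and $v_n \in V$ with $\limsup_{F \to G}\|\varphi(F)/|F| - A_F v_n\| < \epsilon_n$. From $\|A_F v_m - A_F v_n\| \le \|\varphi(F)/|F| - A_F v_m\| + \|\varphi(F)/|F| - A_F v_n\|$, taking $\limsup_{F \to G}$ and invoking Proposition \ref{cor:LW},
$$
\|(v_m - v_n) + \overline{\Cob}\|_{\overline{\Cob}} \le \limsup_{F \to G}\|A_F(v_m - v_n)\| \le \epsilon_m + \epsilon_n,
$$
so $(v_n + \overline{\Cob})_n$ is Cauchy in $V/\overline{\Cob}$. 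Since $(V/\overline{\Cob},\|\cdot\|_{\overline{\Cob}})$ is a Banach space by \cite[Corollary 3.2]{bricenoiommi1} and the quotient map $V \to V/\overline{\Cob}$ is surjective, there is $v \in V$ with $\|(v_n - v) + \overline{\Cob}\|_{\overline{\Cob}} \to 0$. Setting $\psi := \psi_v \in \AdMaps$ and using the triangle inequality together with the bound $\limsup_{F \to G}\|A_F(v_n - v)\| \le C_\pi\|(v_n - v) + \overline{\Cob}\|_{\overline{\Cob}}$ from Proposition \ref{cor:LW}, we get, for every $n$,
$$
\vertG{\varphi - \psi} = \limsup_{F \to G}\left\|\frac{\varphi(F)}{|F|} - A_F v\right\| \le \epsilon_n + C_\pi\|(v_n - v) + \overline{\Cob}\|_{\overline{\Cob}}.
$$
Letting $n \to \infty$ forces $\vertG{\varphi - \psi} = 0$, which is what we want.

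\emph{Main obstacle.} Step 1 and the two triangle-inequality estimates are routine; the substance lies in Step 2, in the coupling between the asymptotic quantity $\limsup_{F \to G}\|A_F \cdot\|$ and the quotient semi-norm $\|\cdot + \overline{\Cob}\|_{\overline{\Cob}}$ provided by Proposition \ref{cor:LW} (equivalently \cite[Corollary 2.6]{bricenoiommi1}), together with completeness of $V/\overline{\Cob}$. It is worth emphasizing why one cannot shortcut the argument by quoting completeness of $(\Maps,\vertG{\cdot})$: the $\psi_{v_n}$ are indeed $\vertG{\cdot}$-Cauchy, but $\AdMaps$ need not be $\vertG{\cdot}$-closed, so a limit in $\Maps$ need not be additive; passing through $V/\overline{\Cob}$ is precisely what upgrades the limit to an additive set map.
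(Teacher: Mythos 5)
Your proof is correct. The paper does not reprove this statement---it imports it from \cite[Theorem 1]{bricenoiommi1}---but your argument is precisely the one that the surrounding machinery is built for: the identification $\AdMaps \cong V$ via $\psi \mapsto \psi(\{1_G\})$, the two-sided comparison between $\limsup_{F \to G}\|A_F(\cdot)\|$ and the quotient semi-norm from Proposition \ref{cor:LW}, and completeness of $V/\overline{\Cob}$. Your closing remark correctly identifies why one must pass through the quotient rather than take a $\vertG{\cdot}$-limit in $\Maps$.
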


\begin{corollary}[{\cite[Corollary 1]{bricenoiommi1}}]
\label{cor:cuneo}
Let $\varphi \in \Maps$. If
$$
\inf_{v \in V} \limsup_{F \to G} \left\|\frac{\varphi(F)}{|F|}-A_F v\right\| = 0,
$$
then there exists $v \in V$ such that 
\begin{equation}
\label{eq1}
\lim_{F \to G} \left\|\frac{\varphi(F)}{|F|}-A_F v\right\| = 0.
\end{equation}
\end{corollary}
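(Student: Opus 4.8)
The plan is to recast the hypothesis in terms of the closed subspace $\AdMaps$ and then invoke Theorem~\ref{thm:cuneo}. For each $v \in V$ introduce the set map $\psi_v\colon \FSet \to V$ given by $\psi_v(F) = S_F v = \sum_{g \in F}\pi(g^{-1})v$. First I would check that $\psi_v \in \AdMaps$: additivity is immediate from $S_{E\sqcup F}v = S_E v + S_F v$ for disjoint $E,F$; $G$-equivariance follows from $\psi_v(g\cdot F) = \sum_{h\in Fg^{-1}}\pi(h^{-1})v = \pi(g)\sum_{f\in F}\pi(f^{-1})v = \pi(g)\psi_v(F)$; and boundedness from $\vertsup{\psi_v}\leq C_\pi\|v\|$. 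Since $A_F v = \psi_v(F)/|F|$, this gives $\vertG{\varphi-\psi_v} = \limsup_{F\to G}\|\varphi(F)/|F| - A_F v\|$, so the hypothesis of the corollary says precisely that $\inf_{v\in V}\vertG{\varphi-\psi_v} = 0$.

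Because $\{\psi_v : v \in V\}\subseteq\AdMaps$, it follows that $\inf_{\psi\in\AdMaps}\vertG{\varphi-\psi} = 0$, i.e. $\varphi$ is asymptotically additive, so Theorem~\ref{thm:cuneo} yields an additive set map $\psi\in\Maps$ with $\vertG{\varphi-\psi} = 0$. The next step is to recover the desired vector by setting $v := \psi(\{1_G\})$ and checking that $\psi = \psi_v$. By additivity $\psi(F) = \sum_{g\in F}\psi(\{g\})$, and since $g^{-1}\cdot\{1_G\} = \{1_G\}g = \{g\}$, $G$-equivariance gives $\psi(\{g\}) = \pi(g^{-1})\psi(\{1_G\}) = \pi(g^{-1})v$; hence $\psi(F) = S_F v = \psi_v(F)$ for every $F\in\FSet$.

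It then remains only to upgrade the $\limsup$ to an honest limit. We have $\vertG{\varphi-\psi_v} = \limsup_{F\to G}\|\varphi(F)/|F| - A_F v\| = 0$, and since $F\mapsto\|\varphi(F)/|F| - A_F v\|$ is nonnegative, the definition of $\limsup_{F\to G}$ as $\inf_{(K,\delta)}\sup\{\,\cdot\,\}$ shows that for each $\epsilon>0$ there is $(K,\delta)$ with $\|\varphi(F)/|F| - A_F v\| < \epsilon$ for every $(K,\delta)$-invariant $F$; that is, $\lim_{F\to G}\|\varphi(F)/|F| - A_F v\| = 0$, which is~\eqref{eq1}. I do not expect a real obstacle here: essentially all of the analytic work is already packaged in Theorem~\ref{thm:cuneo}, and the only point requiring a little care is the identification $\psi = \psi_{\psi(\{1_G\})}$, which is a one-line consequence of additivity together with $G$-equivariance.
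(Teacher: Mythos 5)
Your proposal is correct and follows exactly the route the paper intends: the corollary is deduced from Theorem~\ref{thm:cuneo} by identifying the additive set maps in $\AdMaps$ with the ergodic sums $F \mapsto S_F v$ (via $v = \psi(\{1_G\})$ together with $G$-equivariance), so that the hypothesis is precisely asymptotic additivity and the conclusion is the existence of an additive realization. The verification that $\psi = \psi_{\psi(\{1_G\})}$ and the passage from $\vertG{\varphi-\psi_v}=0$ to the limit statement are both handled correctly.
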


Any element $v \in V$ that satisfies Equation \eqref{eq1} is called an {\bf additive realization} of $\varphi$.

\begin{remark}
\label{rem:additive}
Clearly, the converse of the Theorem \ref{thm:cuneo} holds, that is, if a set map $\varphi$ admits an additive realization, then $\varphi$ is asymptotically additive.    
\end{remark}

We will denote by $\witness$ the set of all additive realizations of $\varphi$. The following results was obtained in \cite{bricenoiommi1}.

\begin{lemma}[{\cite[Lemma 2.8]{bricenoiommi1}}]
\label{lem:witness}
If $v \in V$ is an additive realization of $\varphi$, then
$$
\witness = v + \wCob = \{v+u: u \in \wCob\}.
$$
\end{lemma}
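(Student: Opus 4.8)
The plan is to prove the set equality by establishing the two inclusions $\witness \subseteq v + \wCob$ and $v + \wCob \subseteq \witness$ separately, where $v$ is the given additive realization. The two main ingredients will be the characterization of weak coboundaries in Proposition \ref{cor:cob} (namely $u \in \wCob$ if and only if $\limsup_{F \to G}\|A_F u\| = 0$), the linearity of the ergodic average $A_F$, and the elementary observation that for a nonnegative real-valued set map $F \mapsto \|g(F)\|$ one has $\lim_{F \to G}\|g(F)\| = 0$ if and only if $\limsup_{F \to G}\|g(F)\| = 0$, since the $\liminf$ is automatically nonnegative. Throughout I will also use that $\limsup_{F \to G}$ is subadditive, which follows because any two invariance conditions $(K_1,\delta_1)$ and $(K_2,\delta_2)$ admit a common refinement (the same fact exploited in the proof of Lemma \ref{lem:limFolner}); alternatively, one may reduce every limit to Følner sequences via Lemma \ref{lem:limFolner} and invoke ordinary subadditivity of $\limsup$.

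For the inclusion $\witness \subseteq v + \wCob$, I would take an arbitrary additive realization $w \in \witness$ and show $w - v \in \wCob$. Using linearity of $A_F$ and the triangle inequality,
$$
\|A_F(w-v)\| = \|A_F w - A_F v\| \leq \left\|A_F w - \tfrac{\varphi(F)}{|F|}\right\| + \left\|\tfrac{\varphi(F)}{|F|} - A_F v\right\|.
$$
Taking $\limsup_{F \to G}$ and using subadditivity, both terms on the right vanish because $w$ and $v$ are additive realizations, so $\limsup_{F \to G}\|A_F(w-v)\| = 0$. By Proposition \ref{cor:cob} this gives $w - v \in \wCob$, i.e.\ $w \in v + \wCob$.

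For the reverse inclusion $v + \wCob \subseteq \witness$, I would fix $u \in \wCob$ and show that $v + u$ is again an additive realization. From the triangle inequality,
$$
\left\|\tfrac{\varphi(F)}{|F|} - A_F(v+u)\right\| \leq \left\|\tfrac{\varphi(F)}{|F|} - A_F v\right\| + \|A_F u\|,
$$
and passing to $\limsup_{F \to G}$ the first term vanishes since $v \in \witness$, while the second vanishes by Proposition \ref{cor:cob}. Hence $\limsup_{F \to G}\|\tfrac{\varphi(F)}{|F|} - A_F(v+u)\| = 0$, and by the nonnegativity remark above this upgrades to $\lim_{F \to G}\|\tfrac{\varphi(F)}{|F|} - A_F(v+u)\| = 0$, so $v + u \in \witness$.

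The argument is essentially a pair of triangle-inequality estimates, so there is no deep obstacle; the only point requiring care is the interplay between the genuine limit $\lim_{F \to G}$ appearing in the definition of an additive realization and the $\limsup_{F \to G}$ appearing in Proposition \ref{cor:cob}, which is why I would isolate the nonnegativity observation at the outset. If one prefers to sidestep the subadditivity of the generalized $\limsup$, the cleanest route is to first translate every statement into convergence along an arbitrary Følner sequence using Lemma \ref{lem:limFolner}, where the estimates above are immediate.
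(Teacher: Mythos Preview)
Your proof is correct. The paper does not supply a proof of this lemma here; it is quoted verbatim from \cite[Lemma 2.8]{bricenoiommi1} and used as a black box. Your argument---two triangle-inequality estimates combined with the characterization of $\wCob$ in Proposition~\ref{cor:cob}---is exactly the natural route and almost certainly coincides with the proof in the cited reference.
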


\subsection{Asymptotical additivity implies almost additivity}

The following technical result shows that elements in $\NMaps_\R$---which measure how far from additivity a given almost additive set map is---are dominated by elements in $\NMaps_\R^+$ that are sub-additive over monotone invariant partitions.

\begin{lemma}
\label{lem:kernelsubad}
For every $b \in \NMaps_\R$, there exists a set map $b' \in \NMaps^+_\R$ such that
$$
|b(F)| \leq b'(F) \quad \text{ and } \quad b'(F) \leq \sum_{E \in \Part} b'(E)
$$
for every set $F \in \FSet$ and every monotone invariant partition $\Part$ of $F$.
\end{lemma}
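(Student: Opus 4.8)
The plan is to construct $b'$ directly from $b$ by taking a supremum over all the ways a set $F$ can arise as the ``top'' of a chain of monotone invariant partitions, summing $|b|$ over the pieces, and then verifying that this construction stays inside $\NMaps_\R^+$. Concretely, for $F \in \FSet$ I would define
$$
b'(F) := \sup\left\{\sum_{E \in \Part} |b(E)| : \Part \text{ is a monotone invariant partition of } F\right\},
$$
where the trivial partition $\Part = \{F\}$ is always allowed, so that $b'(F) \geq |b(F)|$ automatically; this gives the first required inequality. The supremum is finite because $|b(E)| \leq \vertsup{|b|}\,|E|$ for every $E$ (as $b \in \Maps_\R$, it is bounded), and $\sum_{E \in \Part}|E| = |F|$ for any partition, so $b'(F) \leq \vertsup{|b|}\,|F| < \infty$; hence $b' \in \Maps_\R$ provided I also check $G$-invariance, which follows from the $G$-invariance of $|b|$ together with the observation that $g \cdot (\cdot)$ is a bijection between monotone invariant partitions of $F$ and of $g \cdot F$ (monotone invariance is a condition about $(K,\delta)$-invariance of the pieces, which is preserved by the translation action since $(K,\delta)$-invariance is translation-invariant).

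Next I would prove the sub-additivity inequality $b'(F) \leq \sum_{E \in \Part} b'(E)$ for every monotone invariant partition $\Part$ of $F$. Given such a $\Part$ and any monotone invariant partition $\Part'$ of $F$, I want to refine $\Part'$ through $\Part$: for each $E \in \Part$, the family $\Part'|_E := \{E \cap E' : E' \in \Part', E \cap E' \neq \emptyset\}$ is a partition of $E$, and I must check it is monotone invariant, so that $\sum_{D \in \Part'|_E}|b(D)| \leq b'(E)$. The key point is the transitivity-type property of monotone invariance: if every piece of $\Part'|_E$ that is $(K,\delta)$-invariant forces $E$ to be $(K,\delta)$-invariant, and $E$ being $(K,\delta)$-invariant forces $F$ to be $(K,\delta)$-invariant, then chaining these gives that $\Part'|_E$ viewed appropriately behaves well; more carefully, I need that the common refinement $\bigcup_{E \in \Part}\Part'|_E$ (which equals the common refinement of $\Part$ and $\Part'$) is again a monotone invariant partition of $F$, and that each $\Part'|_E$ is monotone invariant as a partition of $E$. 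Both reduce to the elementary fact that if $D \subseteq E$ is $(K,\delta)$-invariant then so is $E$ (by monotone invariance of the refinement within $E$) hence so is $F$. Once this is established, summing over $E \in \Part$,
$$
\sum_{E' \in \Part'} |b(E')| \leq \sum_{D \in \bigcup_{E}\Part'|_E} |b(D)| = \sum_{E \in \Part}\sum_{D \in \Part'|_E}|b(D)| \leq \sum_{E \in \Part} b'(E),
$$
where the first inequality uses that $|b(E')| = |b(\bigsqcup_{E} E\cap E')|$ and I would need a mild observation comparing $|b|$ on a set with $\sum |b|$ on a partition of it --- actually this first step needs care and may instead require defining $b'$ via iterated refinements; I would take the sup over $\Part'$ on the left to conclude $b'(F) \leq \sum_{E \in \Part} b'(E)$.

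Finally I must verify $b' \in \NMaps_\R$, i.e. $\vertG{b'} = \limsup_{F \to G} b'(F)/|F| = 0$. This is the step I expect to be the main obstacle, because $b'(F)$ is a supremum over partitions and a priori the ``bad'' pieces in an optimal partition of $F$ need not themselves be invariant even when $F$ is. The idea is to exploit exactly the monotone invariance constraint: if $F$ is very $(K,\delta)$-invariant and $\Part$ is a monotone invariant partition of $F$, then one separates $\Part$ into pieces that are themselves quite invariant --- on which $|b|$ is small relative to their size since $b \in \NMaps_\R$ --- and a remainder; the monotone invariance should prevent the remainder from being too large, because pieces that fail to be invariant cannot tile up to something as invariant as $F$ unless they are few/small. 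I would likely need a quantitative lemma: for $(K,\delta)$ fixed, there is a coarser scale $(K',\delta')$ such that whenever $F$ is $(K',\delta')$-invariant, every monotone invariant partition of $F$ has the property that the union of its non-$(K,\delta)$-invariant pieces has size at most (say) $\delta|F|$; combined with $\vertsup{b'} \leq \vertsup{|b|}$ on the bad part and $\vert b'(E)\vert / |E|$ small on the good part, a standard $\varepsilon/2$ argument gives $b'(F)/|F| \to 0$. Setting up and proving this quantitative monotone-invariance estimate, possibly by a Følner/tiling argument, is the crux; everything else is bookkeeping with the definitions already in the excerpt.
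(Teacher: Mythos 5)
There are two fatal problems with the proposed $b'$, both stemming from the same structural issue: your $b'(F)$ only sees values of $b$ on $F$ and on sets obtained by partitioning $F$, whereas the lemma forces $b'$ on the \emph{pieces} of a partition to dominate the value of $b$ at the coarser scale of the whole set. (1) Your $b'$ need not lie in $\NMaps_\R$. The partition of any $F$ into singletons is always monotone invariant (a singleton being $(K,\delta)$-invariant forces $|K|-1 \leq \delta$ or $|K|+1 \leq \delta$ according to whether $1_G \in K$, and either bound forces $(K,\delta)$-invariance of every nonempty finite set), so your supremum gives $b'(F) \geq \sum_{g \in F}|b(\{g\})| = |F|\,|b(\{1_G\})|$; taking $b \equiv 1$, which belongs to $\NMaps_\R$ when $G$ is infinite, yields $\vertG{b'} \geq 1 \neq 0$. (2) Sub-additivity over monotone invariant partitions also fails: the trivial partition forces $b'(F) \geq |b(F)|$, while $\sum_{E \in \Part} b'(E)$ involves only values of $b$ on subsets of the pieces. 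For $b(F) = \mathbbm{1}_{\{|F|=2\}}$ and $\Part$ the singleton partition of a two-element set, $b'(F) \geq 1$ but $\sum_{E\in\Part} b'(E) = 0$. Moreover, the quantitative lemma you hope will rescue the last step is false: monotone invariance is the one-way implication ``piece invariant $\Rightarrow$ whole invariant,'' so it places no constraint at all on partitions none of whose pieces is invariant---the singleton partition again.

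The paper's construction goes in the opposite direction: instead of summing $|b|$ over sets finer than $F$, it takes, for each $F$, a supremum of $|b(E)|/|E|$ over all sets $E$ in the whole of $\FSet$ that are at most as invariant as $F$. Fixing a cofinal sequence $((K_n,\delta_n))_n$, one lets $n(F)$ be the level of the sequence witnessing the invariance of $F$, sets $r(n) = \sup\{|b(E)|/|E| : E \text{ is } (K_n,\delta_n)\text{-invariant}\}$---a non-increasing sequence tending to $0$ precisely because $\vertG{b}=0$---and defines $b'(F) = |F|\,r(n(F))$. Then $|b(F)| \leq b'(F)$ and $\vertG{b'} = \limsup_{F \to G} r(n(F)) = 0$ are immediate, and monotone invariance of $\Part$ says exactly that each piece $E$ is no more invariant than $F$, so that $r(n(E)) \geq r(n(F))$ and $b'(F) = \sum_{E \in \Part}|E|\,r(n(F)) \leq \sum_{E \in \Part}|E|\,r(n(E)) = \sum_{E \in \Part} b'(E)$. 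This is the inequality your definition cannot reproduce, because your $b'(E)$ never records how large $|b|$ can be on sets at the invariance scale of $F$.
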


\begin{proof}
Let $b \in \NMaps_\R$. Pick a cofinal sequence $((K_n,\delta_n))_n$ in $\FSet \times \R_{>0}$. Define
$$
n(F) = \min\{n: F \text{ is $(K_n,\delta_n)$-invariant}\} \quad \text{ for } F \in \FSet,
$$
and
$$
r(n) = \sup\left\{\frac{|b(F)|}{|F|} : F \text{ is $(K_n,\delta_n)$-invariant} \right\} \quad \text{  for } n \in \N.
$$
Notice that $0 \leq r(n+1) \leq r(n) \leq \vertsup{b}$ and, since $\vertG{b} = 0$, $r(n) \to 0$ as $n \to \infty$. Let $b'\colon \FSet \to \R$ be given by
$$
b'(F) := |F|r(n(F)).
$$
Since $|g \cdot F| = |F|$ and $n(g \cdot F) = n(F)$, $b'$ is $G$-equivariant and, since $n(F) \to \infty$ as $F \to G$ and $r(n) \to 0$ as $n \to \infty$,
$$
\vertG{b'}  =   \limsup_{F \to G} \frac{|b'(F)|}{|F|} = \limsup_{F \to G} r(n(F)) = 0,
$$
so $b' \in \NMaps^+_\R$. In addition, for every $F \in \FSet$, we have that $|b(F)| \leq |F|r(n(F)) = b'(F)$. Finally, given $F \in \FSet$ and a monotone invariant partition $\Part$ of $F$,
$$
b'(F)  =   \sum_{E \in \Part}|E|r(n(F)) \leq \sum_{E \in \Part}|E|r(n(E)) = \sum_{E \in \Part} b'(E),
$$
since $n(E) \leq n(F)$ for every $E \in \Part$.
\end{proof}

\begin{proposition}
\label{lem:asymp-almost}
If $\varphi \in \Maps$ is asymptotically additive, then $\varphi$ is almost additive.
\end{proposition}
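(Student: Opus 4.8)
The plan is to reduce to the additive case via Theorem \ref{thm:cuneo} and then control the error using the sub-additive majorant provided by Lemma \ref{lem:kernelsubad}. By Theorem \ref{thm:cuneo}, since $\varphi$ is asymptotically additive there is an additive set map $\psi \in \AdMaps$ with $\vertG{\varphi - \psi} = 0$. Write $\eta = \varphi - \psi$, so $\eta \in \NMaps$ and $b_0(F) := \|\eta(F)\|$ defines an element of $\NMaps_\R$ (indeed $b_0 \in \NMaps_\R^+$): it is clearly non-negative, bounded since $\vertsup{\eta} < \infty$, $G$-invariant because $\pi$ is isometric-up-to-$C_\pi$ and $\eta$ is $G$-equivariant (so $b_0(g \cdot F) = \|\pi(g)\eta(F)\|$, which after absorbing the uniform bound still has $\vertG{\cdot} = 0$; to be fully precise I would instead take $b_0(F) = C_\pi\|\eta(F)\|$ if I need exact invariance, or invoke that $\vertG{b_0}=0$ suffices modulo $\NMaps_\R$ being closed under the equivariant symmetrization), and $\vertG{b_0} = \limsup_{F\to G}\|\eta(F)\|/|F| = \vertG{\eta} = 0$.

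**Main estimate.** Fix $F \in \FSet$ and a monotone invariant partition $\Part$ of $F$. Since $\psi$ is additive, $\psi(F) = \sum_{E \in \Part}\psi(E)$, hence
$$
\varphi(F) - \sum_{E\in\Part}\varphi(E) = \eta(F) - \sum_{E\in\Part}\eta(E),
$$
so by the triangle inequality
$$
\left\|\varphi(F) - \sum_{E\in\Part}\varphi(E)\right\| \leq \|\eta(F)\| + \sum_{E\in\Part}\|\eta(E)\| = b_0(F) + \sum_{E\in\Part} b_0(E).
$$
The obstacle here is that the right-hand side has the spurious extra term $b_0(F)$ and is not of the required shape $\sum_{E\in\Part} b(E)$. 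This is precisely what Lemma \ref{lem:kernelsubad} is for: apply it to $b_0 \in \NMaps_\R$ to obtain $b_0' \in \NMaps_\R^+$ with $b_0(F) \leq b_0'(F)$ and $b_0'$ sub-additive over monotone invariant partitions. Then
$$
b_0(F) + \sum_{E\in\Part} b_0(E) \leq b_0'(F) + \sum_{E\in\Part} b_0'(E) \leq 2\sum_{E\in\Part} b_0'(E),
$$
using sub-additivity of $b_0'$ for the first summand. Setting $b := 2 b_0'$, which lies in $\NMaps_\R^+$ since $\NMaps_\R$ is a vector space, we conclude
$$
\left\|\varphi(F) - \sum_{E\in\Part}\varphi(E)\right\| \leq \sum_{E\in\Part} b(E)
$$
for every $F$ and every monotone invariant partition $\Part$, i.e., $\varphi$ is $b$-almost additive, hence almost additive.

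**Remark on the delicate point.** The one genuinely non-formal ingredient is verifying that $b_0(F) = \|\eta(F)\|$ (or $C_\pi\|\eta(F)\|$) is $G$-invariant so that it actually belongs to $\Maps_\R$; since $\eta$ is $G$-equivariant, $b_0(g\cdot F) = \|\pi(g)\eta(F)\|$, and exact equality $\|\pi(g)v\| = \|v\|$ holds when $\pi$ is isometric but only up to $C_\pi$ in general — so strictly one should either assume the isometric case (as in the standing hypotheses the representation is into $\Isom(V)$, but $C_\pi$ may exceed $1$) or replace $b_0$ by its equivariant "averaged" majorant $F \mapsto \sup\{\|\pi(g)\eta(g^{-1}\cdot F)\|\}$, which is still bounded and has $\vertG{\cdot} = 0$. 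Once $b_0 \in \NMaps_\R$ is secured, everything else is the two-line computation above plus the black-box Lemma \ref{lem:kernelsubad}. No reference to residual finiteness or Riesz structure is needed; this is the "easy" direction of Theorem \ref{thm:main}.
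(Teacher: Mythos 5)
Your proof is correct and follows essentially the same route as the paper: additive realization via Theorem \ref{thm:cuneo}, the triangle-inequality estimate, and Lemma \ref{lem:kernelsubad} to absorb the extra $b_0(F)$ term into $2\sum_{E\in\Part}b_0'(E)$. One small correction on the point you flag: the symmetrization you propose, $F \mapsto \sup_{g}\|\pi(g)\eta(g^{-1}\cdot F)\|$, collapses by equivariance to $\|\eta(F)\|$ and so does not achieve $G$-invariance (nor does multiplying by $C_\pi$); the paper instead takes $b(F)=\sup_{g\in G}\|\eta(g\cdot F)\|$, which is exactly $G$-invariant, satisfies $b(F)\leq C_\pi\|\eta(F)\|\leq |F|\vertsup{\eta}C_\pi$, and still has $\vertG{b}=0$, after which your argument goes through verbatim.
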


\begin{proof}
Suppose that $\varphi \in \Maps$ is asymptotically additive. Then, by Theorem \ref{thm:cuneo}, there exists an additive realization $\psi \in \AdMaps$ such that $\vertG{\psi-\varphi} = 0$. Let's denote $\psi - \varphi$ by $\nmap$. Since $\psi = \varphi + \nmap$ is additive,
$$
(\varphi+\nmap)(F) = \sum_{E \in \Part} (\varphi+\nmap)(E),
$$
for every set $F \in \FSet$ and every monotone invariant partition $\Part$ of $F$. Therefore,
$$
\varphi(F) - \sum_{E \in \Part} \varphi(E) = \sum_{E \in \Part} \nmap(E) - \nmap(F),
$$
so
$$
\left\|\varphi(F) - \sum_{E \in \Part} \varphi(E)\right\| = \left\|\sum_{E \in \Part} \nmap(E) - \nmap(F)\right\| \leq \sum_{E \in \Part} \|\nmap(E)\| + \|\nmap(F)\|.
$$

Let $b: \FSet \to \R$ be given by $b(F) = \sup_{g \in G} \|\nmap(g \cdot F)\|$. Note that $|b(F)| \leq |F|\vertsup{\nmap}$, so $b$ is well-defined, and $b(F) = b(g \cdot F)$ for every $g \in G$. Now, since $\nmap \in \NMaps$, we have that
$$
\vertG{b} = \limsup_{F \to G} \frac{|b(F)|}{|F|} =  \limsup_{F \to G} \frac{|\|\nmap(F)\||}{|F|} = 0,
$$
so $b\in \NMaps_\R$. Due to Lemma \ref{lem:kernelsubad}, there exists $b' \in \NMaps_\R^+$ that is sub-additive over monotone invariant partitions and such that $\|\nmap(F)\| \leq b(F) \leq b'(F)$, and we conclude that
$$
\left\|\varphi(F) - \sum_{E \in \Part} \varphi(E)\right\| \leq \sum_{E \in \Part} \|\nmap(E)\| + \|\nmap(F)\| \leq \sum_{E \in \Part} (2b')(E).
$$
Since $2b' \in \NMaps_\R$, we have the result.
\end{proof}

\subsection{Almost additivity and residual finiteness imply asymptotical additivity}

A countable group $G$ is \textbf{residually finite} if there exists a decreasing sequence $(H_n)_n$ of finite index normal subgroups $H_n \unlhd G$ such that $\bigcap_{n \in \N} H_n=\{1_G\}$. In the amenable case, it is well-known (see \cite{cortez2014invariant,weiss_monotileable}) that we can always choose such a sequence $(H_n)_n$ with the additional property that its fundamental domains correspond to a F{\o}lner sequence $(F_n)$, thus providing a tiling $\{F_n h: h \in H_n\}$ of $G$. We will exploit this fact in the main theorem of this subsection. We will also make use of the following lemma, which holds in the general amenable setting.

\begin{lemma}
\label{lem:amenable}
For every $(K_0,\delta_0) \in \mathcal{F}(G) \times \R_{> 0}$, there exists $(K,\delta) \in \mathcal{F}(G) \times \R_{> 0}$ such that
$$
|\partial_{K_0}(F)| \leq \delta_0 |F|
$$
for every $(K,\delta)$-invariant set $F \in \mathcal{F}(G)$.
\end{lemma}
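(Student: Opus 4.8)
The plan is the standard one for estimating $K_0$-boundaries: bound $\partial_{K_0}(F)$ by a controlled union of ``translated set-differences'' of $F$, and then feed in $(K,\delta)$-invariance. Concretely, I will first establish the purely combinatorial inclusion
$$\partial_{K_0}(F) \subseteq \bigcup_{a,b \in K_0} a^{-1}\bigl(F \setminus ab^{-1}F\bigr) \qquad \text{for every } F \in \mathcal{F}(G).$$
To prove it, take $g \in \partial_{K_0}(F)$. By definition there exist $a,b \in K_0$ with $ag \in F$ and $bg \notin F$. If $ag$ were to lie in $ab^{-1}F$, we could write $ag = ab^{-1}f$ with $f \in F$, which forces $bg = f \in F$, contradicting $bg \notin F$. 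Hence $ag \in F \setminus ab^{-1}F$, i.e. $g = a^{-1}(ag) \in a^{-1}(F \setminus ab^{-1}F)$, which is the claimed inclusion.

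Next I would pass to cardinalities. Left translation is a cardinality-preserving bijection of $G$, so $|a^{-1}(F \setminus ab^{-1}F)| = |F \setminus ab^{-1}F|$; and since $|cF| = |F|$ for every $c \in G$, one has $|F \setminus cF| = |F| - |F \cap cF| = |cF| - |F \cap cF| = |cF \setminus F|$. Applying this with $c = ab^{-1}$ and summing over the at most $|K_0|^2$ pairs $(a,b)$, the inclusion gives
$$|\partial_{K_0}(F)| \le \sum_{a,b \in K_0} |ab^{-1}F \setminus F|.$$
Now set $K := (K_0 K_0^{-1}) \cup \{1_G\}$. If $F$ is $(K,\delta)$-invariant, then $1_G \in K$ implies $F \subseteq KF$, so $KF \Delta F = KF \setminus F$ and therefore $|KF \setminus F| \le \delta|F|$. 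Moreover, for $a,b \in K_0$ we have $ab^{-1} \in K$, hence $ab^{-1}F \subseteq KF$ and $|ab^{-1}F \setminus F| \le |KF \setminus F| \le \delta|F|$. Combining the two displays yields $|\partial_{K_0}(F)| \le |K_0|^2\,\delta\,|F|$.

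Finally, choosing $\delta := \delta_0 / |K_0|^2 > 0$ (note $K_0 \ne \emptyset$) gives $|\partial_{K_0}(F)| \le \delta_0|F|$ for every $(K,\delta)$-invariant $F$, which is the assertion. There is no genuine analytic difficulty here; the only point requiring care is the left/right bookkeeping in the combinatorial inclusion, together with checking that the output pair $(K,\delta)$ is determined solely by $(K_0,\delta_0)$ — which it is, since $K = (K_0 K_0^{-1}) \cup \{1_G\}$ and $\delta = \delta_0/|K_0|^2$ depend only on $K_0$ and $\delta_0$.
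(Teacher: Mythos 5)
Your proof is correct, and it takes a somewhat different route from the paper's. The paper symmetrizes, setting $K = K_0 \cup K_0^{-1} \cup \{1_G\}$, and exploits the decomposition $KF = \mathrm{Int}_K(F) \sqcup \partial_K(F)$ together with $\partial_{K_0}(F) \subseteq \partial_K(F)$; it then splits $|KF \setminus \mathrm{Int}_K(F)|$ as $|KF \setminus F| + |F \setminus \mathrm{Int}_K(F)|$ and bounds the second term by $|K|\,|KF \setminus F|$, arriving at the constant $1+|K|$ and hence $\delta = \delta_0/(1+|K|)$. You instead cover the boundary directly by translated set differences, $\partial_{K_0}(F) \subseteq \bigcup_{a,b \in K_0} a^{-1}(F \setminus ab^{-1}F)$, use the symmetry $|F \setminus cF| = |cF \setminus F|$, and take $K = K_0K_0^{-1} \cup \{1_G\}$ with $\delta = \delta_0/|K_0|^2$. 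Both arguments are elementary and correct; yours avoids introducing the $K$-interior and the sandwich $\mathrm{Int}_K(F) \subseteq F \subseteq KF$ at the price of a quadratically larger test set $K$ and constant ($|K_0|^2$ versus $2|K_0|+1$), which is immaterial for the statement. Your pointwise inclusion argument (if $ag \in ab^{-1}F$ then $bg \in F$) is clean and the left/right bookkeeping checks out against the paper's conventions for $\partial_{K_0}$ and $(K,\delta)$-invariance.
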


\begin{proof}
Consider $K = K_0 \cup K_0^{-1} \cup \{1_G\}$ and $\delta = \frac{\delta_0}{1+|K|}$. Take an arbitrary $(K,\delta)$-invariant set $F \in \FSet$. Since $1_G \in K$,
$$
\mathrm{Int}_{K}(F) \subseteq F \subseteq KF \quad \text{ and } \quad |KF \setminus F| = |KF \Delta F| \leq \delta|F|.
$$
As $K = K^{-1}$, we have $$
KF = K^{-1}F = \mathrm{Int}_{K}(F) \sqcup \partial_{K}(F).
$$
Therefore,
$$
|\partial_{K_0}(F)| \leq |\partial_{K}(F)|  = |KF \setminus \mathrm{Int}_{K}(F)| = |KF \setminus F| + |F \setminus \mathrm{Int}_{K}(F)|.
$$
Finally, observing that $|F \setminus \mathrm{Int}_{K}(F)| \leq |K| |KF \setminus F|$, we obtain
$$
|\partial_{K_0}(F)| \leq (1+|K|)|KF \setminus F| \leq  (1+|K|)\delta|F| = \delta_0 |F|,
$$
completing the proof.
\end{proof}

\begin{theorem} \label{thm_zzc}
Suppose that $G$ is residually finite. Then, every almost additive set map $\varphi \in \Maps$ is asymptotically additive.
\end{theorem}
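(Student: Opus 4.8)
The plan is to show that $\varphi$ admits an \emph{additive realization}; by Remark~\ref{rem:additive} this already gives asymptotic additivity, and by Corollary~\ref{cor:cuneo} it is enough to prove that $\inf_{v\in V}\limsup_{F\to G}\|\varphi(F)/|F|-A_Fv\|=0$. Using Lemma~\ref{lem:kernelsubad} I would first fix an error $b\in\NMaps_\R^+$ that is sub-additive over monotone invariant partitions and with respect to which $\varphi$ is $b$-almost additive. By residual finiteness and amenability, fix a decreasing chain $(H_n)_n$ of finite-index normal subgroups with $\bigcap_n H_n=\{1_G\}$ whose fundamental domains $F_n$ (chosen with $1_G\in F_n$) form a Følner sequence and give the tilings $G=\bigsqcup_{h\in H_n}F_nh$; moreover one may arrange the chain so that these tilings are nested, i.e.\ each $F_m$ with $m\ge n$ is a disjoint union $\bigsqcup_{h\in J_{n,m}}F_nh$ of $H_n$-translates of $F_n$.

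Next I would establish a single-scale estimate. Fix $n$ and $\delta_0>0$. For $F$ sufficiently invariant (the threshold depending on $n$, $\delta_0$, $F_n$), Lemma~\ref{lem:amenable} applied with $K_0=F_nF_n^{-1}$ shows that the boundary $R:=F\setminus\bigsqcup\{F_nh:F_nh\subseteq F\}$ satisfies $|R|\le\delta_0|F|$. The partition of $F$ into the interior tiles $F_nh\subseteq F$ together with the singletons of $R$ is monotone invariant: each tile is a translate of $F_n$, hence $(K,\delta)$-invariant exactly when $F_n$ is, a pure union of such tiles inherits this invariance, and a singleton is $(K,\delta)$-invariant only for pairs $(K,\delta)$ for which \emph{every} set is invariant. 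Applying $b$-almost additivity to this partition, using $b(F_nh)=b(F_n)$ and the $G$-equivariance of $\varphi$, and noting $b(F_n)/|F_n|\to0$ since $b\in\NMaps_\R^+$ and $(F_n)_n$ is Følner, one obtains, after dividing by $|F|$,
$$\left\|\frac{\varphi(F)}{|F|}-\frac{|F_n|}{|F|}\sum_{F_nh\subseteq F}\pi(h^{-1})\frac{\varphi(F_n)}{|F_n|}\right\|\le\frac{b(F_n)}{|F_n|}+\delta_0\bigl(b(\{1_G\})+C_\pi\|\varphi(\{1_G\})\|\bigr).$$
Splitting $A_Fv$ along the same decomposition $F=\bigsqcup\{F_nh:F_nh\subseteq F\}\sqcup R$ and using $\pi(h^{-1})S_{F_n}v=|F_n|\,\pi(h^{-1})A_{F_n}v$ then yields
$$\left\|\frac{\varphi(F)}{|F|}-A_Fv\right\|\le C_\pi\left\|\frac{\varphi(F_n)}{|F_n|}-A_{F_n}v\right\|+\frac{b(F_n)}{|F_n|}+\delta_0\bigl(b(\{1_G\})+C_\pi\|\varphi(\{1_G\})\|+C_\pi\|v\|\bigr),$$
valid for $F$ sufficiently invariant.

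It therefore suffices to produce, for each $\varepsilon>0$, an $n$ and a $v\in V$ with $\|\varphi(F_n)/|F_n|-A_{F_n}v\|<\varepsilon$, and this is where the nested structure is decisive. On the pure tiling $F_m=\bigsqcup_{h\in J_{n,m}}F_nh$, $b$-almost additivity gives $\varphi(F_m)/|F_m|=A_{J_{n,m}}(\varphi(F_n)/|F_n|)+O(b(F_n)/|F_n|)$, while $A_{F_m}=A_{J_{n,m}}\circ A_{F_n}$; hence the sequence $(\varphi(F_n)/|F_n|)_n$ is compatible along the tower modulo errors tending to $0$ and modulo coboundaries. Since $A_{F_n}v\equiv v\pmod{\Cob}$ and $V/\wCob$ is complete (\cite[Corollary 3.2]{bricenoiommi1}), the classes $\varphi(F_n)/|F_n|+\wCob$ form a Cauchy sequence in $V/\wCob$; lifting its limit and propagating the compatibility back down the tower produces the desired $v$, and Corollary~\ref{cor:cuneo} finishes the proof. (Equivalently, once the two displayed estimates are in place one may produce $v$ by invoking the argument behind Theorem~\ref{thm:cuneo} directly.)

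The hard part is precisely this last step — extracting an honest additive realization. The naive candidates $\varphi(F_n)/|F_n|$, or their averages over a single level, do \emph{not} work: by the second displayed bound they agree with an ergodic average only up to the term $\|\varphi(F_n)/|F_n|-A_{F_n}(\varphi(F_n)/|F_n|)\|$, which lies in $\Cob$ but need not be small. It is the \emph{tower} of tilings — available only because $G$ is residually finite — together with the Banach-space completeness of $V/\wCob$ that bridges this gap. A secondary but still delicate point is the verification that the tiling-plus-boundary partitions are monotone invariant and the quantitative control of the boundary terms via Lemma~\ref{lem:amenable}.
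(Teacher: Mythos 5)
Your reduction is set up correctly through your second displayed estimate: a single tiling of $F$ by translates of $F_n$ gives
\[
\left\|\frac{\varphi(F)}{|F|}-A_Fv\right\|\le C_\pi\left\|\frac{\varphi(F_n)}{|F_n|}-A_{F_n}v\right\|+\frac{b(F_n)}{|F_n|}+O(\delta_0),
\]
so everything hinges on producing, for some $n$ with $b(F_n)/|F_n|$ small, a vector $v$ with $\left\|\varphi(F_n)/|F_n|-A_{F_n}v\right\|$ small. This is exactly where your argument has a genuine gap. The tower relations you derive only ever involve averages of $\varphi(F_n)/|F_n|$ over the \emph{sparse} index sets $J_{n,m}\subseteq H_n$; they never produce an expression of the form $A_{F_n}(\cdot)$, an average over a full tile. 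Likewise, the Cauchy property of $\varphi(F_n)/|F_n|+\wCob$ in $V/\wCob$ controls, via Proposition \ref{cor:LW}, only $\limsup_{F\to G}\left\|A_F\left(\varphi(F_n)/|F_n|-v\right)\right\|$, i.e.\ averages over \emph{highly invariant} sets $F$; it says nothing about $A_{F_n}\left(\varphi(F_n)/|F_n|-v\right)$ for a fixed $n$, and the correction $v-A_{F_n}v$ is a coboundary whose norm need not be small --- the very obstruction you identify for the naive candidates reappears here unresolved. So ``propagating the compatibility back down the tower'' is not justified, and the parenthetical fallback of invoking the argument behind Theorem \ref{thm:cuneo} is circular, since that theorem presupposes asymptotic additivity.

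The missing idea --- and what the paper actually does --- is to average over all $|F_n|$ \emph{shifted} copies of the tiling instead of using one. For each $t\in F_n$ one takes the partition $\mathcal{P}_t=\{F_ng\cap F: g\in H_nt,\ F_ng\cap F\neq\emptyset\}$ and sums the almost-additivity estimate over $t\in F_n$; since $\{H_nt\}_{t\in F_n}$ partitions $G$, the sparse sums recombine into $\sum_{g\in F}\varphi(F_ng)=|F|\,A_F\varphi(F_n)$, an honest ergodic average over all of $F$. This shows directly that $v=\varphi(F_n)/|F_n|$ satisfies $\limsup_{F\to G}\left\|\varphi(F)/|F|-A_Fv\right\|\le\epsilon_0$, with no nested tower, no passage to $V/\wCob$, and no need to control $\left\|\varphi(F_n)/|F_n|-A_{F_n}v\right\|$ at all. (A secondary, more minor issue: your monotone-invariance check for the ``interior tiles plus singletons'' partition leans on $F$ being a pure union of tiles, which it is not; the remainder $R$ can spoil the implication from the invariance of a tile to that of $F$. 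The paper faces a similar subtlety with its partial tiles $F_ng\cap F$ and handles it by readjusting $(K,\delta)$, so this is fixable, but it deserves care.)
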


\begin{proof}
Since $G$ is amenable and residually finite, there exists a F{\o}lner sequence $(F_n)_n$ in $\FSet$ and a decreasing sequence $(H_n)_n$ of finite index normal subgroups $H_n \unlhd G$ such that $G = \bigsqcup_{t \in F_n} H_nt = \bigsqcup_{t \in F_n} tH_n = \bigsqcup_{h \in H_n} F_nh$ for all $n$. Without loss of generality, assume that $1_G \in F_n$ for all $n$.

Fix $\epsilon_0 > 0$ and let $b \in \NMaps_\R$ such that $\varphi$ is $b$-almost additive. Let
$$\epsilon = \frac{\epsilon_0}{1+\vertsup{b}+2\vertsup{\varphi}} > 0,
$$
and choose $n \in \N$ such that
$$
b(F_n) \leq |F_n|\epsilon \quad \text{ and } \quad \vertsup{\varphi} \leq |F_n|\epsilon.
$$
By Lemma \ref{lem:amenable} applied to $(F_n,\epsilon)$, there exists $(K,\delta) \in \FSet \times \R_{>0}$ such that for every $(K,\delta)$-invariant set $F \in \FSet$,
$$
\mathrm{Int}_{F_n}(F) \subseteq F   \quad   \text{ and }    \quad    |\partial_{F_n}(F)| \leq |F|\epsilon.
$$
We can assume, maybe readjusting $(K,\delta)$, that for all $E \subseteq F_n$,
$$
E \text{ is } (K,\delta)\text{-invariant} \implies F \text{ is } (K,\delta)\text{-invariant}.
$$

Fix a $(K,\delta)$-invariant set $F$. Given $t \in F_n$, consider the partition $\mathcal{P}_t$ of $F$ given by
$$
\mathcal{P}_t=\left\{F_ng \cap F: g \in H_nt,~ F_ng \cap F \neq \emptyset\right\},
$$
and observe that
$$
G = Gt = \bigsqcup_{c \in H_n} F_n ct = \bigsqcup_{g \in H_nt} F_n g \quad \text{ and } \quad \left\{g \in G: F_ng \cap F \neq \emptyset\right\} = \mathrm{Int}_{F_n}(F) \sqcup \partial_{F_n}(F).
$$
Thus, as $\mathcal{P}_t$ is a monotone invariant partition of $F$ for every $t \in F_n$,
$$
\left\||F_n|\varphi(F) - \sum_{t \in F_n} \sum_{E \in \mathcal{P}_t} \varphi(E)\right\|   \leq \sum_{t \in F_n} \left\|\varphi(F) - \sum_{E \in \mathcal{P}_t } \varphi(E)\right\|  \leq \sum_{t \in F_n} \sum_{E \in \mathcal{P}_t } b(E).
$$
Also, for every $t \in F_n$,
\begin{align*}
\sum_{E \in \mathcal{P}_t } b(E)    &   =   \sum_{g \in H_nt \cap \mathrm{Int}_{F_n}(F)} b(F_ng) + \sum_{g \in H_nt \cap \partial_{F_n}(F)} b(F_ng \cap F)  \\
                                    &   \leq   \sum_{g \in H_nt \cap \mathrm{Int}_{F_n}(F)} b(F_n) + \sum_{g \in H_nt \cap \partial_{F_n}(F)} |F_n|\vertsup{b}  \\
                                    &   =   \left|H_nt \cap \mathrm{Int}_{F_n}(F)\right| b(F_n) + \left|H_nt \cap \partial_{F_n}(F)\right| |F_n|\vertsup{b}.
\end{align*}
Therefore, since $\{H_nt: t \in F_n\}$ is a partition of $G$,
\begin{align*}
\left\||F_n|\varphi(F) - \sum_{t \in F_n} \sum_{E \in \mathcal{P}_t} \varphi(E)\right\|                       & \leq  \sum_{t \in F_n} \left(\left|H_nt \cap \mathrm{Int}_{F_n}(F)\right| b(F_n) + \left|H_nt \cap \partial_{F_n}(F)\right| |F_n|\vertsup{b}\right)   \\
                            & = \left|\bigsqcup_{t \in F_n} H_nt \cap \mathrm{Int}_{F_n}(F)\right| b(F_n) + \left|\bigsqcup_{t \in F_n} H_nt \cap \partial_{F_n}(F)\right| |F_n|\vertsup{b}   \\
                            & = \left|\mathrm{Int}_{F_n}(F)\right| b(F_n) + \left|\partial_{F_n}(F)\right| |F_n|\vertsup{b}  \\
                            &   \leq  |F||F_n|\epsilon + \vertsup{b}|F||F_n|\epsilon = (1+\vertsup{b})|F||F_n|\epsilon,
\end{align*}
and
\begin{align*}
\left\|\sum_{t \in F_n} \sum_{E \in \mathcal{P}_t } \varphi(E) - \sum_{g \in F} \varphi(F_ng) \right\|     & =  \left\|\sum_{t \in F_n} \sum_{E \in \mathcal{P}_t } \varphi(E) - \sum_{t \in F_n} \sum_{g \in  H_nt \cap F} \varphi(F_ng) \right\|  \\
                                            &   \leq   \sum_{t \in F_n}\sum_{g \in H_nt \cap \partial_{F_n}(F)} \left\|\varphi(F_n g \cap F) - \varphi(F_n g)\right\|\\
                    &   \leq   \sum_{t \in F_n}\sum_{g \in H_nt \cap \partial_{F_n}(F)} 2\vertsup{\varphi}|F_n|\\
                    &   =  2\vertsup{\varphi}|F_n||\partial_{F_n}(F)| \leq  2\vertsup{\varphi}|F||F_n|\epsilon.
\end{align*}
Combining both inequalities, we obtain that
$$
\left\||F_n|\varphi(F) - \sum_{g \in F} \varphi(F_ng) \right\|  \leq (1+\vertsup{b}+2\vertsup{\varphi})|F||F_n|\epsilon = |F||F_n|\epsilon_0.
$$
Next, dividing by $|F||F_n|$, we have shown that there exists $v_{\epsilon_0} = \frac{\varphi(F_n)}{|F_n|} \in V$ such that
$$
\left\|\frac{\varphi(F)}{|F|} - A_F v_{\epsilon_0} \right\|  \leq \epsilon_0,
$$
where $A_F v_{\epsilon_0} = \frac{1}{|F|}\sum_{g \in F} \frac{\varphi(F_ng)}{|F_n|}$. Since this holds for every $(K,\delta)$-invariant set $F$, we have
$$
\limsup_{F \to G}
\left\|\frac{\varphi(F)}{|F|} - A_F v_{\epsilon_0} \right\|  \leq \epsilon_0.
$$
As $\epsilon_0$ is arbitrary, we conclude.
\end{proof}

\subsection{Riesz-almost additivity implies almost additivity}

\begin{proposition}
\label{lem:riesz-almost}
If $(V,\|\cdot\|,\ll)$ is a complete semi-normed Riesz space and $\varphi$ is Riesz-almost additive, then $\varphi$ is almost additive.
\end{proposition}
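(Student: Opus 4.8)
The plan is to extract from a Riesz-almost additivity witness $\nmap \in \NMaps$ a genuine almost-additivity witness $b \in \NMaps_\R$, by setting $b(F) := \sup_{g \in G}\|\nmap(g \cdot F)\|$ — the same symmetrization used in the proof of Proposition \ref{lem:asymp-almost}. First I would reduce to the case $\nmap \in \NMaps^+$: if $\varphi$ is $\nmap$-Riesz-almost additive then it is $|\nmap|$-Riesz-almost additive, since $\nmap(E) \ll |\nmap(E)|$ for every $E$ forces $\sum_{E \in \Part}\nmap(E) \ll \sum_{E \in \Part}|\nmap(E)|$, while $|\nmap| \in \NMaps$ because $\|\cdot\|$ is a Riesz semi-norm (so $\||\nmap|(F)\| = \|\nmap(F)\|$, hence $\vertG{|\nmap|} = \vertG{\nmap} = 0$).

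Next I would verify $b \in \NMaps_\R^+$. Boundedness is immediate from $\|\nmap(g\cdot F)\| \le \vertsup{\nmap}\,|g\cdot F| = \vertsup{\nmap}\,|F|$, so $\vertsup{b} \le \vertsup{\nmap} < \infty$; $G$-invariance holds because the $G$-action on $\FSet$ is by a group, so the orbit $\{g'\cdot(g\cdot F): g'\in G\}$ equals $\{g'\cdot F: g'\in G\}$ and hence $b(g \cdot F) = b(F)$; and $\vertG{b} = 0$ because right translation preserves both $(K,\delta)$-invariance and cardinality, so for each $(K,\delta)$ the supremum of $b(F)/|F|$ over $(K,\delta)$-invariant sets $F$ coincides with that of $\|\nmap(F)\|/|F|$, and taking the infimum over $(K,\delta)$ yields $\vertG{b} = \limsup_{F\to G}\|\nmap(F)/|F|\| = \vertG{\nmap} = 0$. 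This is verbatim the computation already carried out in Proposition \ref{lem:asymp-almost}.

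Finally, for the domination itself: fix $F \in \FSet$ and a monotone invariant partition $\Part$ of $F$. Since $\nmap \in \NMaps^+$, each $\nmap(E)$ lies in the positive cone $V^+$, which is closed under addition, so $\sum_{E\in\Part}\nmap(E) \in V^+$ and therefore equals its own absolute value. The Riesz-almost additivity hypothesis then reads $\left|\varphi(F) - \sum_{E\in\Part}\varphi(E)\right| \ll \left|\sum_{E\in\Part}\nmap(E)\right|$, and since $\|\cdot\|$ is a Riesz semi-norm this gives
$$
\left\|\varphi(F) - \sum_{E\in\Part}\varphi(E)\right\| \le \left\|\sum_{E\in\Part}\nmap(E)\right\| \le \sum_{E\in\Part}\|\nmap(E)\| \le \sum_{E\in\Part}b(E),
$$
so $\varphi$ is $b$-almost additive, hence almost additive. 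I do not expect a serious obstacle; the only point requiring care is that $F \mapsto \|\nmap(F)\|$ need not be $G$-invariant when $\pi$ is merely uniformly bounded rather than isometric, which is exactly why $b$ is defined via a supremum over the orbit, mirroring the workaround already used in the proof of Proposition \ref{lem:asymp-almost}.
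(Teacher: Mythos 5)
Your proof is correct and follows essentially the same route as the paper: the paper also takes $b(F) = \sup_{g \in G}\|\nmap(g\cdot F)\|$ and derives $b$-almost additivity from the Riesz semi-norm property via $\left\|\varphi(F)-\sum_{E\in\Part}\varphi(E)\right\| \leq \sum_{E\in\Part}\|\nmap(E)\| \leq \sum_{E\in\Part}b(E)$. You merely make explicit two steps the paper leaves implicit (the reduction to $\nmap \in \NMaps^+$, already stipulated in the definition of Riesz-almost additivity, and the verification that $b \in \NMaps_\R$, which mirrors the computation in Proposition \ref{lem:asymp-almost}).
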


\begin{proof}
Let $\nmap \in \NMaps$ such that $\varphi$ is $\nmap$-Riesz-almost additive and let $b: \FSet \to \R$ be given by $b(F) = \sup_{g \in G} \|\nmap(g \cdot F)\|$. Note that $b\in \NMaps_\R$. Since $\|\cdot\|$ is a Riesz semi-norm, we have that for every $F \in \FSet$ and every monotone invariant partition $\Part$ of $F$,
$$
\left|\varphi(F) - \sum_{E \in \Part} \varphi(E)\right| \ll \sum_{E \in \Part} \nmap(E) \implies \left\|\varphi(F) - \sum_{E \in \Part} \varphi(E)\right\| \leq \sum_{E \in \Part} \left\|\nmap(E)\right\| \leq \sum_{E \in \Part} b(E),
$$
and we conclude.
\end{proof}

\begin{example}[Almost additive set map that is not Riesz-almost additive]
\label{exmp:counterexample}
Let $([0,1], \mathcal{B}, \lambda)$ be the unit interval endowed with the Lebesgue measure. Suppose that $G \acts ([0,1], \lambda)$ is trivial and consider the associated Koopman representation $\koopman: G \to \Isom(L^1([0,1], \R))$ given by $\koopman(g) f = f$ for all $g \in G$ and $f \in L^1([0,1], \R)$. Take $\varphi\colon \FSet \to L^1([0,1], \R)$ given by
$$
\varphi(F) = |F|^2 \mathbbm{1}_{\left[0,\frac{1}{|F|\log(1+|F|)}\right]}    \quad   \text{ for } F \in \FSet.
$$
As $\|\varphi(F)\|_{L^1([0,1], \R)} = \frac{|F|}{\log(1+|F|)}$, it follows that $\varphi(F) \in L^1([0,1], \R)$ and $\vertsup{\varphi} < \infty$. Additionally, $\varphi$ is $G$-equivariant since it only depends on $|F|$. Let's prove that there exists $b \in \NMaps_\R^+$ such that $\varphi$ is $b$-almost additive. Indeed, given $F \in \FSet$ and a partition $\Part = \{E\}$ of $F$, observe that
$$
|F| = \sum_{E} |E| \quad \text{ and } \quad |F|^2 \geq \sum_{E} |E|^2.
$$
This implies that
$$
\left|\varphi(F) - \sum_E \varphi(E)\right| = (|F|^2-\sum_{E} |E|^2) \mathbbm{1}_{\left[0,\frac{1}{|F|\log(1+|F|)}\right]} + \sum_{E} |E|^2 \mathbbm{1}_{\left[\frac{1}{|F|\log(1+|F|)},\frac{1}{|E|\log(1+|E|)}\right]},
$$
so
$$
\left\|\varphi(F) - \sum_E \varphi(E)\right\|  \leq |F| + \sum_E \frac{|E|}{\log(1+|E|)} = \sum_E \left(1+\frac{|E|}{\log(1+|E|)}\right).
$$
Thus, if $b\colon \FSet \to \R$ is given by $b(F) = 1+\frac{|F|}{\log(1+|F|)}$, we obtain that $b \in \NMaps_\R^+$ and $\varphi$ is $b$-almost additive. Next, suppose that $\varphi$ is $\xi$-Riesz-almost additive for some $\xi \in \NMaps$. Then, we would have
$$
\left|\varphi(F) - \sum_E \varphi(E)\right| \ll \sum_E \xi(E),
$$
so
$$
(|F|^2-\sum_{E} |E|^2) \mathbbm{1}_{\left[0,\frac{1}{|F|\log(1+|F|)}\right]} \ll \sum_E \xi(E)\mathbbm{1}_{\left[0,\frac{1}{|F|\log(1+|F|)}\right]},
$$
i.e.,
$$
|F|^2 \mathbbm{1}_{\left[0,\frac{1}{|F|\log(1+|F|)}\right]}(\omega) \leq \sum_E (|E|^2 + \xi(E)(\omega)) \mathbbm{1}_{\left[0,\frac{1}{|F|\log(1+|F|)}\right]}(\omega) \quad   \mu(\omega)\text{-a.e.}
$$
However, if we take the partition $\Part = \{\{g\}: g \in F\}$, we would obtain that
$$
|F|^2 \mathbbm{1}_{\left[0,\frac{1}{|F|\log(1+|F|)}\right]}(\omega) \leq |F| (1 + \xi(\{1_G\})(\omega)) \mathbbm{1}_{\left[0,\frac{1}{|F|\log(1+|F|)}\right]}(\omega) \quad   \mu(\omega)\text{-a.e.},
$$
so, if $|F| = n$ for some $n \geq 1$,
$$
(n-1) \mathbbm{1}_{\left[0,\frac{1}{n\log(n+1)}\right]}(\omega) \leq \xi(\{1_G\})(\omega) \mathbbm{1}_{\left[0,\frac{1}{n\log(n+1)}\right]}(\omega) \quad   \mu(\omega)\text{-a.e.}
$$
Thus, we obtain that
\begin{align*}
\|\xi(\{1_G\})\|_{L^1([0,1],\R)}     &   \geq    \int_{\left[0,\frac{1}{\log(2)}\right]}{\xi(\{1_G\})}d\lambda  \\
                    &   \geq  \int{\sum_{n \geq 1} (n-1)\mathbbm{1}_{\left[\frac{1}{(n+1)\log(n+2)},\frac{1}{n\log(n+1)}\right]}(\omega)}d\lambda(\omega)    \\
                                    &   = \sum_{n \geq 2} (n-1) \left(\frac{1}{n\log(n+1)} - \frac{1}{(n+1)\log(n+2)}\right)   \\
                                    &   \geq \sum_{n \geq 2} \frac{n}{2} \left(\frac{1}{n\log(n+1)} - \frac{1}{(n+1)\log(n+1)}\right)  \\
                                    &   = \frac{1}{2} \sum_{n \geq 3} \frac{1}{n\log n} \xrightarrow[n \to \infty]{} \infty,
\end{align*}
which contradicts the fact that $\xi(\{1_G\})$ belongs to $L^1([0,1], \R)$.
\end{example}

Example \ref{exmp:counterexample} exhibits a complete normed Riesz space that admits almost additive set maps that are not Riesz-almost additive. It seems very plausible to extend this fact to other semi-normed Riesz spaces. However, it is worthwhile to point out that almost additivity coincides with Riesz-almost additivity for the Banach lattice $(\R,|\cdot|,\leq)$, where the Riesz semi-norm condition is an equivalence. Moreover, under extra assumptions, it is possible to obtain a converse of Proposition \ref{lem:riesz-almost} for more general Banach lattices.

Consider the Banach lattice $(\mathcal{B}(X),\|\cdot\|_\infty,\ll)$ of bounded real-valued functions over a set $X$ endowed with the supremum norm and the pointwise order. We say that a bounded and $G$-equivariant representation $\pi: G \to \Isom(\mathcal{B}(X))$ is \textbf{unital} if $\pi(g) \mathbbm{1} = \mathbbm{1}$ for every $g \in G$, where $\mathbbm{1}$ denotes the constant function equal to $1$. In this context, a set map $\varphi\colon \FSet \to \mathcal{B}(X)$ is \textbf{of constant type} if there exists real-valued set map $c: \FSet \to \R$ such that $\varphi(F) = c(F)\mathbbm{1}$ for every $F \in \FSet$. Notice that, since
$$
\|\varphi(F)\|_\infty = |c(F)|\|\mathbbm{1}\|_\infty = |c(F)|,
$$
we have that $\varphi$ is bounded if and only $c$ is bounded. Moreover, if $\pi$ is unital, 
$$
\pi(g) \varphi(F) = \pi(g) c(F)\mathbbm{1} = c(F) \pi(g) \mathbbm{1} = c(F) \mathbbm{1} \quad \text{ for every } g \in G,
$$
and, since $\varphi(g \cdot F) = c(g \cdot F)\mathbbm{1}$, we have that $\varphi$ is $G$-equivariant with respect to $\pi$ if and only if $c$ is $G$-equivariant with respect to the trivial representation $\pi_\R$. Thus, $\varphi$ belongs to $\Maps$ if and only if $c$ belongs to $\Maps_\R$, and $\varphi$ belongs to $\NMaps$ if and only if $c$ belongs to $\NMaps_\R$. In this setting, a set map will be called {\bf Riesz-almost additive of constant type} if it is $\nmap$-Riesz-almost additive for a set map $\nmap \in \NMaps$ of constant type.

\begin{proposition}
\label{prop:riesz-almost-constant}
Let $(\mathcal{B}(X),\|\cdot\|_\infty,\ll)$ be the Banach lattice of bounded real-valued functions over a set $X$. Consider a unital, bounded, and $G$-equivariant representation $\pi: G \to \Isom(\mathcal{B}(X))$ and a set map $\varphi \in \Maps$. Then, $\varphi$ is Riesz-almost additive of constant type if and only if $\varphi$ is almost additive.
\end{proposition}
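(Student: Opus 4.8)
The plan is to reduce everything to the following elementary feature of the Banach lattice $(\mathcal{B}(X),\|\cdot\|_\infty,\ll)$: for any $h \in \mathcal{B}(X)$ and any scalar $\lambda \geq 0$, the order relation $|h| \ll \lambda \mathbbm{1}$ is equivalent to the norm estimate $\|h\|_\infty \leq \lambda$, since both simply assert that $|h(x)| \leq \lambda$ for every $x \in X$. Combined with the observation recorded just before the statement that, when $\pi$ is unital, a constant-type set map $F \mapsto c(F)\mathbbm{1}$ lies in $\NMaps$ precisely when $c \in \NMaps_\R$, this dictionary turns the Riesz-almost additivity inequality tested against a constant-type witness into exactly the almost additivity inequality.

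For the implication ``almost additive $\Rightarrow$ Riesz-almost additive of constant type,'' I would take $b \in \NMaps_\R$ with $\varphi$ $b$-almost additive and, replacing $b$ by $|b|$, assume $b \in \NMaps_\R^+$. Set $\nmap(F) = b(F)\mathbbm{1}$; this is of constant type and, by unitality, $\nmap \in \NMaps$. For every $F \in \FSet$ and every monotone invariant partition $\Part$ of $F$, the hypothesis reads $\bigl\|\varphi(F) - \sum_{E \in \Part}\varphi(E)\bigr\|_\infty \leq \sum_{E \in \Part} b(E)$; by the equivalence above this is the same as $\bigl|\varphi(F) - \sum_{E \in \Part}\varphi(E)\bigr| \ll \bigl(\sum_{E \in \Part} b(E)\bigr)\mathbbm{1} = \sum_{E \in \Part}\nmap(E)$, so $\varphi$ is $\nmap$-Riesz-almost additive of constant type.

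For the converse, a constant-type witness $\nmap(F) = c(F)\mathbbm{1}$ with $\nmap \in \NMaps$ forces $c \in \NMaps_\R$ (again by unitality), and we may assume $c \in \NMaps_\R^+$. Running the same dictionary in the other direction, $\bigl|\varphi(F) - \sum_{E \in \Part}\varphi(E)\bigr| \ll \sum_{E \in \Part} c(E)\mathbbm{1}$ gives $\bigl\|\varphi(F) - \sum_{E \in \Part}\varphi(E)\bigr\|_\infty \leq \sum_{E \in \Part} c(E)$ for every $F$ and every monotone invariant $\Part$, i.e.\ $\varphi$ is $c$-almost additive. (Alternatively, this direction is immediate from Proposition \ref{lem:riesz-almost}, since Riesz-almost additivity of constant type is a special case of Riesz-almost additivity.)

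There is no substantial obstacle here: the entire content is the sup-norm/pointwise-order equivalence for scalar bounds in $\mathcal{B}(X)$ together with the unitality-driven identification of $\NMaps_\R$ with the constant-type maps in $\NMaps$. The only point requiring a little care is to pass to the positive cones so that the scalar $\sum_{E \in \Part} b(E)$ (resp.\ $\sum_{E \in \Part} c(E)$) is nonnegative, which is legitimate since both notions of additivity are insensitive to replacing their witness by its absolute value.
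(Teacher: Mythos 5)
Your proposal is correct and follows essentially the same route as the paper: both directions rest on the pointwise identity that $\bigl|\varphi(F)-\sum_{E\in\Part}\varphi(E)\bigr| \ll \lambda\mathbbm{1}$ is equivalent to the sup-norm bound by $\lambda$, with the witness $\nmap(F)=b(F)\mathbbm{1}$ lying in $\NMaps$ by unitality, and the converse delegated to Proposition \ref{lem:riesz-almost}. The only cosmetic difference is that you make the positivity of the scalar bound explicit, which is automatic anyway since the left-hand side of the almost additivity inequality is a norm.
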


\begin{proof}
If $\varphi$ is almost additive, there exists $b \in \NMaps_\R$ such that $\varphi$ is $b$-almost additive. Then, for every $x \in X$,
$$
\left|\varphi(F)-\sum_{E \in \mathcal{P}} \varphi(E)\right|(x) \leq \left\|\varphi(F)-\sum_{E \in \mathcal{P}} \varphi(E)\right\|_\infty \leq \sum_{E \in \mathcal{P}} b(E) = \sum_{E \in \mathcal{P}} b(E)\mathbbm{1}(x).
$$
Letting $\nmap: \FSet \to \mathcal{B}(X)$ be given by $\nmap(F) = b(F)\mathbbm{1}$ for every $F \in \FSet$, we get that $\nmap$ belongs to $\NMaps$, $\varphi$ is $\nmap$-Riesz-almost additive, and $\nmap$ is of constant type, so $\varphi$ is Riesz-almost additive of constant type. The converse directly follows from Proposition \ref{lem:riesz-almost}.
\end{proof}

\begin{remark}
\label{rem:riesz-riesz}
Notice that if $X$ is a topological space, the closed subspace of bounded and continuous real-valued functions $\mathcal{C}_b(X)$ induces a Banach lattice as well, and we can have an analogue of Proposition \ref{prop:riesz-almost-constant} restricted to $\mathcal{C}_b(X)$. Clearly, in both contexts, every set map that is Riesz-almost additive of constant type, is Riesz-almost additive.
\end{remark}

\subsection{Summary of implications}

The following result completely characterizes the relations between the different weak forms of addditivity discussed in this section.

\theoremone*

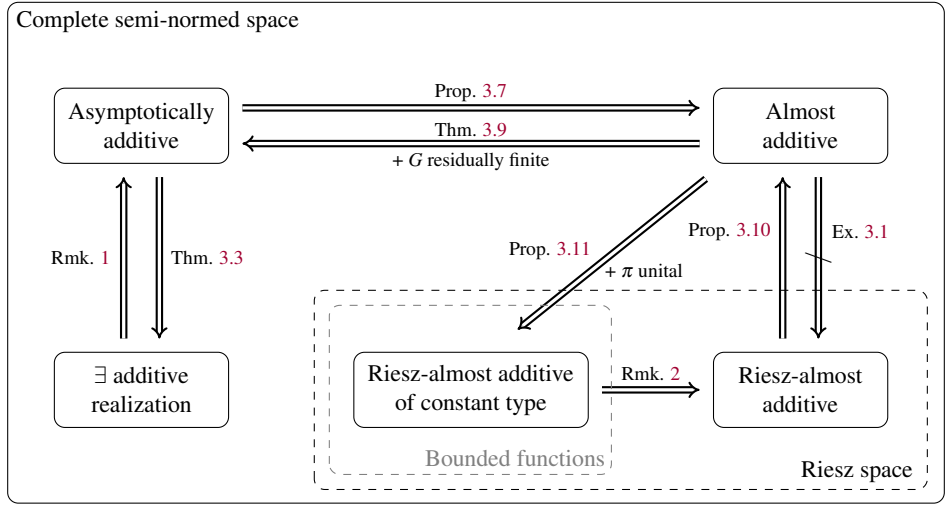
\begin{figure}[ht]
\label{diagram}
\centering
\begin{tikzpicture}[baseline]
    \node[draw, rounded corners, inner xsep=0.5cm, inner ysep=1.0cm](box){
\begin{tikzcd}[
    row sep=2.5cm, column sep=1.65cm,
    cells={nodes={draw, rounded corners, minimum width=1.5cm, minimum height=1cm}}
]
\parbox{2.0cm}{\centering \small Asymptotically additive}
    \arrow[rr, shift left=1.5ex, Rightarrow, thick, shorten >=0.5em, shorten <=0.5em,"\text{Prop. \ref{lem:asymp-almost}}"] 
    \arrow[rr, shift right=1.5ex, Leftarrow, thick, shorten >=0.5em, shorten <=0.5em, "\text{+ $G$ residually finite}"' {yshift=-1pt},"\text{Thm. \ref{thm_zzc}}"] 
    \arrow[d, shift left=1.5ex, Rightarrow, thick, shorten >=0.5em, shorten <=0.5em, swap,"\text{~Thm. \ref{thm:cuneo}}"']
    \arrow[d, shift right=1.5ex, Leftarrow, thick, shorten >=0.5em, shorten <=0.5em, swap, "\text{Rmk. \ref{rem:additive}}~"]
&  &
\parbox{2.0cm}{\centering \small Almost additive}
    \arrow[d, shift left=1.5ex, Rightarrow, thick, shorten >=0.5em, shorten <=0.5em, swap, negate, "~\text{Ex. \ref{exmp:counterexample}}"' {yshift=10pt}]
    \arrow[d,shift right=1.5ex, Leftarrow, thick, shorten >=0.5em, shorten <=0.5em, swap, "\text{Prop. \ref{lem:riesz-almost}~}" {yshift=10pt}]
    \arrow[dl, shift right=1.5ex, Rightarrow, thick, shorten >=0.5em, shorten <=1.75em,"\text{Prop. \ref{prop:riesz-almost-constant}}"' {xshift=-10pt, yshift=-8.5pt},"\text{+ $\pi$ unital}" {xshift=-10pt, yshift=-5pt}]\\
\parbox{2.0cm}{\centering \small $\exists$ additive realization} & 
\parbox{2.8cm}{\centering \small Riesz-almost additive of constant type}
\arrow[r, Rightarrow, thick, shorten >=0.5em, shorten <=0.5em,"\text{Rmk. \ref{rem:riesz-riesz}}" {yshift=1pt}]
&
\parbox{2.0cm}{\centering \small Riesz-almost additive}
\end{tikzcd}
    };
\node[anchor=north west, font=\large] at (box.north west) {\small Complete semi-normed space};

\draw[dashed,rounded corners] 
        (-1.65-0.5, -2.35-0.8) rectangle (5.5+0.5, -1.3+0.8);
\draw[dashed,rounded corners,gray] 
        (-1.65-0.3, -2.35-0.6) rectangle (1.5+0.3, -1.3+0.6);
\node[anchor=north west, font=\large] at (4.2, -2.65) {\small Riesz space};
\node[anchor=north west, font=\large] at (-0.8, -2.5) {\small \textcolor{gray}{Bounded functions}};
\end{tikzpicture}
\caption{Diagram of implications for Theorem \ref{thm:main}.}
\end{figure}

\section{Ergodic theorems for non-additive set maps}

By means of Theorem \ref{thm:cuneo} and Corollary \ref{cor:cuneo}, to every asymptotically additive set map we can associate an additive realization. This fact allows us to proof both mean and pointwise versions of ergodic theorems for asymptotically additive set maps by reducing them to their additive versions.

\subsection{A mean ergodic theorem for non-additive set maps}
\label{sec:mean-ergodic}

Generalizations of the mean ergodic theorem have been obtained for a wide range of groups, actions, and spaces. Our following result builds up on these together with our study of weak forms of additivity. In this, we prove a non-additive version of results obtained for the additive setting.

\theoremtwo*

\begin{remark}
\label{rem:meanerg}
If in the statement of Theorem \ref{thm:meanasymp} we replace the asymptotically additive set map $\varphi$ with the additive set map $F \mapsto S_Fv$ for some $v \in V$, this result was obtained in \cite[Theorem 4.4]{pogorzelski2013almost} for Banach spaces, and for Hilbert spaces in \cite[Theorem 4.22]{kerr2016ergodic}. Although the statement in \cite[Theorem 4.22]{kerr2016ergodic} is for unitary representations, the proof can be directly adapted to the more general uniformly bounded case.
\end{remark}

\begin{proof}[Proof of Theorem \ref{thm:meanasymp}]
By Corollary \ref{cor:cuneo}, there exists $v \in V$ such that
$$
\lim_{F \to G} \left\|\frac{\varphi(F)}{|F|}-A_F v\right\| = 0.
$$
By Remark \ref{rem:meanerg}, there is bounded projection $P$ on $V$ such that
$$
\lim_{F \to G} \left\|A_F v - Pv\right\| = 0.
$$
Combining both equations, the result follows.
\end{proof}

\begin{remark}
Krein-Smulian theorem \cite[\S II.2, Theorem 11]{diesteluhl} establishes that the closed convex hull of a weakly compact subset of a Banach space is weakly compact, so in Theorem \ref{thm:meanasymp} it suffices to assume that the orbit $\left\{\pi(g) v : g \in G\right\}$ is relatively weakly compact for each $v \in V$. Moreover, since $\pi$ is uniformly bounded, if $V$ reflexive, the compactness assumption is automatically fulfilled due to the Banach-Alaoglu Theorem. It is also worthwhile mentioning that for $1 \leq p \leq \infty$, if $(\Omega, \mathcal{B}, \mu)$ is a finite measure space, the Bochner space $L^p(\Omega, Y)$ is reflexive if and only if both $Y$ and $L^p(\Omega, \R)$ are reflexive \cite[\S IV.1, Corollary 2]{diesteluhl}.
\end{remark}

\begin{remark}
Notice that in Theorem \ref{thm:meanasymp}, we can replace the assumption that $(V,\|\cdot\|)$ is a Banach space with complete semi-normed space. Indeed, the space $(V,\|\cdot\|)$ is complete if and only if $(V/K,\|\cdot\|_K)$ is a Banach space, where $K$ is the kernel of the seminorm and $\|\cdot\|_K$ is the norm given by $\|v+K\|_K = \|v\|$ for any $v \in V$. In this case, we can conclude that there exists $u \in p^{-1}(P(V))$ such that
$$
\lim_{F \to G}\left\|\frac{\varphi(F)}{|F|} - u'\right\|=0
$$
for any $u' \in u+K$, where $p: V \to V/K$ denotes the quotient map.
\end{remark}

Interestingly, we can appeal to Theorem \ref{thm:main} to prove a version of Theorem \ref{thm:meanasymp} for almost additive set maps provided the group is residually finite. This allow us to recover and provide a new perspective on previous results.

In \cite{pogorzelski2013almost}, Pogorzelski called a set map $b\colon \mathcal{F}(G) \to \R_{\geq 0}$ to be a \emph{boundary term} if, for all $E, F \in \FSet$,
\begin{enumerate}
\item $b(g \cdot F)=b(F)$ for all $g \in G$;
\item $\lim_{F \to G} \frac{b(F)}{|F|}=0$;
\item $b(E \cap F) \leq b(E)+b(F)$, $b(E \cup F) \leq b(E)+b(F)$ and $b(E \setminus F) \leq b(E)+b(F)$.
\end{enumerate}
Based on this, he called a set map $\varphi \in \Maps$ to be \emph{almost additive} if, for some boundary term $b$, it holds that
\begin{equation}
\label{eqn:almost}
\left\|\varphi(F)-\sum_{E \in \mathcal{P}} \varphi(E)\right\| \leq \sum_{E \in \mathcal{P}} b(E)
\end{equation}
for all $F \in \FSet$ and every partition $\mathcal{P} \subseteq \FSet$ of $F$. We would like to clarify how this notion fits in our context. First, if $b$ is a boundary term, then
$$
\frac{|b(F)|}{|F|} = \frac{b(F)}{|F|} \leq \sum_{g \in F} \frac{b(\{g\})}{|F|} = |F| \frac{b(\{1_G\})}{|F|} = b(\{1_G\}),
$$
so $\vertsup{b} = b(\{1_G\}) < \infty$. Since also $b$ satisfies (1) and (2), we have that $b$ belongs to  $\NMaps_\R$. Moreover, our definition of almost additive requires Equation \eqref{eqn:almost} to hold only for monotone invariant partitions. Therefore, every almost additive set map in the sense of Pogorzelski is almost additive in our sense but not necessarily vice versa.

Using his definition of almost additivity, and requiring an extra technical condition called \emph{tiling admissibility} upon boundary terms, Pogorzelski studied weakly measurably group representations of general unimodular groups having strong F\o lner sequences. Since discrete amenable groups satisfy all these properties, we present here a version of \cite[Theorem 5.7]{pogorzelski2013almost} adapted to our context.

\begin{theorem}[{\cite[Theorem 5.7]{pogorzelski2013almost}}]
\label{thm:meanset}
Let $G$ be a countable discrete amenable group, $(V,\|\cdot\|)$ a Banach space, and $\pi\colon G \to \mathrm{Isom}(V)$ a uniformly bounded representation.  Suppose that, for each $v \in V$, the convex hull $\mathrm{co}\left\{\pi(g) v : g \in G\right\}$ is relatively weakly compact. Then, for every bounded and $G$-equivariant set map $\varphi\colon \FSet \to V$ that is almost additive for some tiling-admissible boundary term $b$, there exists $u \in \mathrm{Inv}(\pi)$ such that 
$$
\lim_{F \to G}\left\|\frac{\varphi(F)}{|F|} - u\right\|=0.
$$
\end{theorem}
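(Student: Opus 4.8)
The plan is to deduce Theorem \ref{thm:meanset} from Theorem \ref{thm:meanasymp} by first upgrading ``almost additive for a tiling-admissible boundary term'' to ``asymptotically additive''. The hypotheses on $\pi$ — uniform boundedness and relative weak compactness of the convex hulls $\mathrm{co}\{\pi(g)v:g\in G\}$ — are the same in both statements, so once asymptotic additivity is established, Theorem \ref{thm:meanasymp} produces a bounded projection $P$ with $\lim_{F\to G}\|\varphi(F)/|F| - Pv\|=0$ for every additive realization $v$ of $\varphi$, and a standard F{\o}lner estimate (equivalently, the invariance part of the cited additive mean ergodic theorems, see Remark \ref{rem:meanerg}) shows that $u:=Pv$ lies in $\mathrm{Inv}(\pi)$; the triangle inequality then gives the conclusion.

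The heart of the matter is therefore the implication ``almost additive for a tiling-admissible boundary term $\Rightarrow$ asymptotically additive'', which I would prove by retracing the argument of Theorem \ref{thm_zzc}. As noted in the discussion preceding the statement, a tiling-admissible boundary term $b$ satisfies $\vertsup{b}=b(\{1_G\})<\infty$, is $G$-invariant, and has $\vertG{b}=0$, so $b\in\NMaps_\R$ and $\varphi$ is $b$-almost additive in the sense of \S\ref{sec:almost-a}. In Theorem \ref{thm_zzc} residual finiteness was used only to supply a F{\o}lner sequence $(T_k)_k$ together with the $|T_k|$ tilings of $G$ by translates of the single tile $T_k$; tiling-admissibility is precisely Pogorzelski's device providing such tilings (by translates of a F{\o}lner set) in the general amenable setting, specializing to the coset tiling when $G$ is residually finite. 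Fixing $\epsilon_0>0$, set $\epsilon=\epsilon_0/(1+\vertsup{b}+2\vertsup{\varphi})$, choose a tile $T$ from the family with $b(T)\le|T|\epsilon$ and $|T|$ large enough that $\vertsup{\varphi}\le|T|\epsilon$, and by Lemma \ref{lem:amenable} pick $(K,\delta)$ so that $|\partial_T(F)|\le|F|\epsilon$ for every $(K,\delta)$-invariant $F$. For such an $F$, the partitions $\mathcal{P}_t=\{Tg\cap F: Tg\cap F\neq\emptyset\}$ obtained from the $|T|$ shifted tilings are monotone invariant, and applying $b$-almost additivity over each of them and summing over $t$ bounds $\bigl\||T|\,\varphi(F)-\sum_t\sum_{E\in\mathcal{P}_t}\varphi(E)\bigr\|$ by $|\mathrm{Int}_T(F)|\,b(T)+|\partial_T(F)|\,|T|\vertsup{b}$, using axiom (3) of a boundary term to control $b$ on the cut tiles; replacing cut tiles $Tg\cap F$ by full tiles $Tg$ costs at most $2\vertsup{\varphi}\,|T|\,|\partial_T(F)|$. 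Dividing by $|F|\,|T|$ yields $\|\varphi(F)/|F|-A_Fv_{\epsilon_0}\|\le\epsilon_0$ with $v_{\epsilon_0}=\varphi(T)/|T|$, hence $\limsup_{F\to G}\|\varphi(F)/|F|-A_Fv_{\epsilon_0}\|\le\epsilon_0$; since $\epsilon_0$ is arbitrary, $\inf_{v\in V}\limsup_{F\to G}\|\varphi(F)/|F|-A_Fv\|=0$, so $\varphi$ is asymptotically additive by Corollary \ref{cor:cuneo}.

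The main obstacle is exactly this tiling step: in the absence of residual finiteness there is no cofinite-index normal subgroup to tile along, so one must genuinely invoke Pogorzelski's strong F{\o}lner sequences and tiling-admissibility machinery (or, alternatively, the modern exact-tiling results for amenable groups) and handle with care the tiles that $F$ cuts — which is precisely where the three sub-additivity axioms $b(E\cap F),\,b(E\cup F),\,b(E\setminus F)\le b(E)+b(F)$ of a boundary term are used. Everything downstream — passing from the additive realization $v$ to the projection $P$ and to the invariant limit $u=Pv\in\mathrm{Inv}(\pi)$ — is then an immediate application of Corollary \ref{cor:cuneo} and Theorem \ref{thm:meanasymp}.
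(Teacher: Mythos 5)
There is a genuine gap, and it is worth noting first that the paper does not actually prove this statement: Theorem \ref{thm:meanset} is quoted as Pogorzelski's \cite[Theorem 5.7]{pogorzelski2013almost}, and the surrounding discussion only observes that \emph{when $G$ is in addition residually finite} the conclusion follows from the paper's machinery, via the chain ``almost additive in Pogorzelski's sense $\Rightarrow$ almost additive $\Rightarrow$ asymptotically additive (Theorem \ref{thm_zzc}) $\Rightarrow$ Theorem \ref{thm:meanasymp}.'' Your overall strategy is exactly that chain, and the downstream part of your argument (Corollary \ref{cor:cuneo}, Theorem \ref{thm:meanasymp}, and $u = Pv \in \mathrm{Inv}(\pi)$ via the additive mean ergodic theorem of Remark \ref{rem:meanerg}) is fine. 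But you are claiming the statement for a general countable amenable group, and the step you yourself identify as ``the heart of the matter'' is not actually carried out.

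Concretely: your sketch of the implication ``tiling-admissible almost additive $\Rightarrow$ asymptotically additive'' transplants the computation of Theorem \ref{thm_zzc} verbatim, writing partitions $\mathcal{P}_t = \{Tg \cap F\}$ indexed by the $|T|$ ``shifts'' $t$ of a single tile $T$ and summing the resulting estimates so that $\sum_t |H t \cap \mathrm{Int}_T(F)| = |\mathrm{Int}_T(F)|$. That bookkeeping rests on $G$ being partitioned both as $\bigsqcup_{h} Th$ and as $\bigsqcup_{t \in T} Ht$ for a finite-index normal subgroup $H$ --- i.e., precisely on residual finiteness. For a general amenable group no exact monotiling by translates of a single F{\o}lner tile is available (even the Downarowicz--Huczek--Zhang exact tilings use several shapes and carry no group structure on the set of centers), and Pogorzelski's tiling-admissibility is a condition tied to Ornstein--Weiss quasi-tilings, which are only $\epsilon$-disjoint and only cover a $(1-\epsilon)$-fraction of $F$; handling the overlaps and the uncovered portion is where his axioms $b(E\cap F), b(E\cup F), b(E\setminus F) \le b(E)+b(F)$ and the admissibility hypothesis actually do work, and none of that appears in your argument. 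As written, your proof establishes the theorem only under the additional hypothesis that $G$ is residually finite --- which is the paper's remark --- while the general statement still requires either Pogorzelski's original proof or a genuinely new quasi-tiling argument that you have not supplied.
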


Although the setting considered by Pogorzelski is much more general, when restricted to the countable, discrete, residually finite case, our theory allow us to obtain the same conclusions under weaker assumptions. As we already explained, our definition of almost additivity is weaker and, moreover, we do not require any extra assumption on the set map $b$ such as tiling admissibility. Then, we can see that Theorem \ref{thm:meanasymp} implies Theorem \ref{thm:meanset}, as almost additivity in the sense of Pogorzelski implies almost additivity, and almost additivity implies asymptotical additivity by Theorem \ref{thm:main}. We have therefore established, assuming residual finiteness, that almost additive set maps in the sense of Pogorzelski have additive realizations. In this way we reduce the almost additive case to the more classical additive one, providing a more transparent proof.

\subsection{A pointwise ergodic theorem for non-additive set maps} \label{sec:point}

Let $G$ be a countable discrete amenable group acting on a probability space $(\Omega, \mathcal{B}, \mu)$ by measure-preserving transformations, and let $(F_n)_n$ be a tempered F{\o}lner sequence. Lindendstrauss' pointwise ergodic theorem \cite[Theorem 2.1]{lindenstrauss2001pointwise} establishes that for any $f \in L^1(\Omega,\R)$, there exists $\overline{f} \in L^1(\Omega,\R)$ that is $G$-invariant (for the Koopman representation $\koopman$) such that
$$
\lim_{n \to \infty} A_{F_n}f(\omega)=\overline{f}(\omega) \quad \quad   \mu(\omega)\text{-a.e.}
$$
This far-reaching generalization of Birkhoff's theorem has been subsequently extended to different contexts \cite{bowen_nevo,nevo,pogorzelski2013almost}.

Given $p \geq 1$, we let $L^p_\infty(\Omega, Y) := L^p(\Omega, Y) \cap L^\infty(\Omega, Y)$ and define the Banach space $(L^p_\infty(\Omega, Y),\|\cdot\|_{L^p_\infty(\Omega, Y)})$, where $\|\cdot\|_{L^p_\infty(\Omega, Y)} = \|\cdot\|_{L^p(\Omega, Y)} + \|\cdot\|_{L^\infty(\Omega, Y)}$. Notice that if $Y$ is a Banach lattice, so is $L^p_\infty(\Omega, Y)$ under the pointwise order. We have the following non-additive version of \cite[Theorem 6.8]{pogorzelski2013almost} adapted to $L^p_\infty(\Omega, Y)$ instead of $L^p(\Omega, Y)$, framed in the setting of group representations.

\theoremthree*

\begin{remark}
\label{rem:pogorzelski6-8}
If in the statement of Theorem \ref{thm:pointwise-asymp} we replace the asymptotically additive set map $\varphi$ with the additive set map $F \mapsto S_Fv$ for some $v \in V$, the result was obtained in \cite[Theorem 6.8]{pogorzelski2013almost} for $L^p(\Omega, Y)$. As pointed out in \cite{pogorzelski2013almost}, if we choose $B = 1$ and $\theta = \mathrm{id}_G$, the condition
$$
\left\|(\pi(g)f)(\omega)\right\|_Y \leq B\left\|f\left(\theta(g)^{-1} \cdot \omega\right)\right\|_Y  \quad   \mu(\omega)\text{-a.e.}
$$
is satisfied with an equality and everywhere by the Koopman representation given by
$$
(\koopman(g) f)(\omega)=f(g^{-1} \cdot\omega)
$$ for $f \in L^p(\Omega, Y)$, $g \in G$, and $\omega \in \Omega$.
\end{remark}

\begin{proof}[Proof of Theorem \ref{thm:pointwise-asymp}]
By Corollary \ref{cor:cuneo}, there exists $f \in L^p_\infty(\Omega, Y)$ such that 
$$
\lim_{n \to \infty} \left\|\frac{\varphi(F_n)}{|F_n|} - A_{F_n} f\right\|_{L^p_\infty(\Omega, Y)} \leq \lim_{F \to G} \left\|\frac{\varphi(F)}{|F|} - A_F f\right\|_{L^p_\infty(\Omega, Y)} = 0,
$$
and as convergence in $L^p_\infty(\Omega, Y)$ implies convergence in $L^\infty(\Omega, Y)$, which at the same time implies $\mu$-almost everywhere convergence, there exists $\tilde{\Omega}_1 \subseteq \Omega$ such that $\mu(\Omega \setminus \tilde{\Omega}_1) = 0$ and
$$
\lim_{n \to \infty} \left\|\frac{\varphi(F_n)}{|F_n|}(\omega) - A_{F_n} f(\omega)\right\|_Y = 0 \quad   \text{ for every } \omega \in \tilde{\Omega}_1.
$$
Since $L^p_\infty(\Omega, Y) \subseteq L^p(\Omega, Y)$, and appealing to Remark \ref{rem:pogorzelski6-8}, there exist $\overline{f} \in L^p(\Omega, Y)$ and $\tilde{\Omega}_2 \subseteq \Omega$ with $\mu(\Omega \setminus \tilde{\Omega}_2) = 0$ such that
$$
\lim_{n \to \infty}\left\|A_{F_n}f(\omega) - \overline{f}(\omega)\right\|_Y=0    \quad   \text{ for every } \omega \in \tilde{\Omega}_2.
$$
Therefore, since
$$
\left\|\frac{\varphi(F_n)}{|F_n|}(\omega) - \overline{f}(\omega)\right\|_Y  \leq \left\|\frac{\varphi(F_n)}{|F_n|}(\omega) - A_{F_n} f(\omega)\right\|_Y + \left\|A_{F_n}f(\omega) - \overline{f}(\omega)\right\|_Y,
$$
after taking the limit $n \to \infty$, we conclude that
$$
\lim_{n \to \infty}\left\|\frac{\varphi(F_n)}{|F_n|}(\omega) - \overline{f}(\omega)\right\|_Y = 0  \quad \text{ for every } \omega \in \tilde{\Omega}_1 \cap \tilde{\Omega}_2,
$$
i.e., the convergence holds almost everywhere. It remains to show that $\overline{f}$ belongs to $L^p_\infty(\Omega, Y)$. Since $f \in L^p_\infty(\Omega, Y)$, there exists $\tilde{\Omega}_3 \subseteq \Omega$ with $\mu(\Omega \setminus \tilde{\Omega}_3) = 0$ such that
$$
\|f(\omega)\|_Y \leq \|f\|_{L^\infty(\Omega, Y)} \quad   \text{ for every } \omega \in \tilde{\Omega}_3.
$$
Then, given $g \in G$,
$$
\|f(g \cdot \omega)\|_Y \leq \|f\|_{L^\infty(\Omega, Y)} \quad   \text{ for every } \omega \in g^{-1} \cdot \tilde{\Omega}_3.
$$
In particular, we have that
$$
\|A_F f (\omega)\|_Y \leq \frac{1}{|F|} \sum_{g \in F} \|(\pi(g^{-1})f)(\omega)\|_Y \leq \frac{B}{|F|} \sum_{g \in F} \|f(\theta(g) \cdot \omega)\|_Y \leq B \|f\|_{L^\infty(\Omega, Y)}
$$
for every $\omega \in \bigcap_{g \in F} \theta(g)^{-1} \cdot \tilde{\Omega}_3$ and every $F \in \FSet$. This implies that
$$
\|\overline{f}(\omega)\|_Y = \|\lim_{n \to \infty} A_{F_n} f (\omega)\|_Y = \lim_{n \to \infty} \|A_{F_n} f (\omega)\|_Y \leq \lim_{n \to \infty} B \|f\|_{L^\infty(\Omega, Y)} = B \|f\|_{L^\infty(\Omega, Y)}
$$
for every $\omega \in \tilde{\Omega}_2 \cap \bigcap_{g \in G} g^{-1} \cdot \tilde{\Omega}_3$. Since $\tilde{\Omega}_2 \cap \bigcap_{g \in G} g^{-1} \cdot \tilde{\Omega}_3$ is a countable intersection of full measure sets (as $G$ acts by measure-preserving transformations), we conclude that $\overline{f}$ is essentially bounded, i.e., $\overline{f} \in L^p_\infty(\Omega, Y)$.
\end{proof}

\begin{remark}
Notice that in Theorem \ref{thm:pointwise-asymp}, which deals with the asymptotically additive case, we cannot freely replace $L^p_\infty(\Omega, Y)$ by $L^p(\Omega, Y)$, in contrast to the merely additive case. Indeed, given a tempered F{\o}lner sequence $(F_n)_n$, it suffices to consider a sequence $(f_n)_n$ that converges to $f \equiv 0$ in $L^p(\Omega, Y)$ but does not converge almost everywhere along the subsequence $(f_{|F_n|})_n$. This can be done by emulating the so-called \emph{typewriter sequence} $(f_n)_n$ in $L^1([0,1], \R)$ defined by the formula
$$
f_n:= \mathbbm{1}_{\left[\frac{n-2^k}{2^k}, \frac{n-2^k+1}{2^k}\right]}
$$
whenever $k \geq 0$ and $2^k \leq n<2^{k+1}$. Then, if $\varphi\colon \FSet \to L^p(\Omega, Y)$ is the set map given by $\varphi(F) = |F|f_{|F|}$, we have that $\varphi$
is bounded (since $\frac{\varphi(F)}{|F|} = f_{|F|}$ and $(f_n)_n$ converges in $L^p(\Omega, Y)$) and $G$-equivariant (since $|Fg^{-1}| = |F|$). Moreover,
$$
\lim_{F \to G} \left\|\frac{\varphi(F)}{|F|}-A_F f\right\|_{L^p(\Omega, Y)} = \lim_{F \to G} \left\|f_{|F|}\right\|_{L^p(\Omega, Y)} = 0,
$$
so $\varphi$ is asymptotically additive and $f$ is an additive realization of $\varphi$. However, there is no $\overline{f} \in L^p(\Omega, Y)$ such that
$$
\lim_{n \to \infty} \left\|\frac{\varphi(F_n)}{|F_n|}(\omega)-\overline{f}(\omega)\right\|_Y = 0   \quad   \mu(\omega)\text{-a.e.},
$$
because
$$
\lim_{n \to \infty} \left\|f_{|F_n|}(\omega)-\overline{f}(\omega)\right\|_Y = \lim_{n \to \infty} \left\|\frac{\varphi(F_n)}{|F_n|}(\omega)-\overline{f}(\omega)\right\|_Y
$$
and that would contradict our choice of $(f_n)_n$.
\end{remark}

In the particular case of the Koopman representation, which is naturally adapted to the $L^1_\infty(\Omega, Y)$ case, we now prove that the integral of weak coboundaries is zero (see \S \ref{subsection:coboundaries}).

\begin{lemma}
\label{lem:integralcoboundary}
Suppose that $(\Omega, \mathcal{B}, \mu)$ is a measure space and $G \acts (\Omega, \mu)$. If $\koopman\colon G \to \Isom(L^1_\infty(\Omega, Y))$ is the Koopman representation, then
$$
\int_\Omega f d\mu = 0  \quad   \text{ for every } f \in \wCob.
$$
\end{lemma}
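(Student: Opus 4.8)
The plan is to reduce the statement to two facts: that the Koopman operators $\koopman(g)$ preserve the Bochner integral, and that the Bochner integral is continuous for the $L^1$-norm, which is dominated by the norm $\|\cdot\|_{L^1_\infty(\Omega,Y)}$ defining $\wCob$.

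First I would check the claim on the generators of $\Cob$. Fix $v \in L^1_\infty(\Omega,Y)$ and $g \in G$; since $v \in L^1(\Omega,Y)$, the Bochner integral $\int_\Omega v\,d\mu$ is defined. I claim that $\int_\Omega \koopman(g) v\, d\mu = \int_\Omega v\, d\mu$. For a step function $s = \sum_{i=1}^k y_i \mathbbm{1}_{A_i}$ one has $\koopman(g)s = \sum_{i=1}^k y_i \mathbbm{1}_{g \cdot A_i}$, so by the measure-preserving hypothesis $\int_\Omega \koopman(g) s\, d\mu = \sum_{i=1}^k y_i \mu(g \cdot A_i) = \sum_{i=1}^k y_i \mu(A_i) = \int_\Omega s\, d\mu$. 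Choosing step functions $s_n$ with $\|v - s_n\|_{L^1(\Omega,Y)} \to 0$ and using that $\koopman(g)$ is isometric, so that $(\koopman(g)s_n)_n$ approximates $\koopman(g)v$ in $L^1(\Omega, Y)$, together with the definition of the Bochner integral as a limit, the identity passes from step functions to $v$. Hence $\int_\Omega (v - \koopman(g)v)\, d\mu = 0$, and by linearity $\int_\Omega u\, d\mu = 0$ for every $u \in \Cob$.

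Next I would pass to the closure. Given $f \in \wCob$, pick a sequence $(u_n)_n$ in $\Cob$ with $\|u_n - f\|_{L^1_\infty(\Omega,Y)} \to 0$. Since $\|\cdot\|_{L^1_\infty(\Omega,Y)} = \|\cdot\|_{L^1(\Omega,Y)} + \|\cdot\|_{L^\infty(\Omega,Y)} \ge \|\cdot\|_{L^1(\Omega,Y)}$, also $\|u_n - f\|_{L^1(\Omega,Y)} \to 0$, and therefore
$$
\left\|\int_\Omega f\, d\mu\right\|_Y = \left\|\int_\Omega (f - u_n)\, d\mu\right\|_Y \le \int_\Omega \|f(\omega) - u_n(\omega)\|_Y\, d\mu(\omega) = \|f - u_n\|_{L^1(\Omega,Y)} \xrightarrow[n \to \infty]{} 0,
$$
so $\int_\Omega f\, d\mu = 0$.

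The only genuinely technical point is the change-of-variables identity $\int_\Omega \koopman(g)v\,d\mu = \int_\Omega v\,d\mu$ for vector-valued $v$; it is elementary but does require the step-function reduction above, and it uses both the measure-preserving assumption and the fact that $\koopman$ is isometric, so that an approximating sequence for $v$ transforms into an approximating sequence for $\koopman(g)v$. Everything else is linearity and $L^1$-continuity of the Bochner integral.
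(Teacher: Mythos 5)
Your proposal is correct and follows essentially the same route as the paper: integrate to zero on the generators of $\Cob$ using that the Koopman operators preserve the Bochner integral, then pass to $\wCob$ via the bound $\left\|\int_\Omega h\, d\mu\right\|_Y \leq \int_\Omega \|h\|_Y\, d\mu$ and the fact that the $L^1_\infty$-norm dominates the $L^1$-norm. The only difference is that you spell out the change-of-variables identity via step functions, a detail the paper takes for granted.
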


\begin{proof}
If $f$ is a coboundary, that is, if $f \in \Cob$, then there exists $f_1,\dots,f_\ell \in L^p(\Omega, Y)$, $\lambda_1,\dots,\lambda_\ell \in \R$, and $g_1,\dots,g_\ell \in G$ such that
$$
f = \sum_{i=1}^\ell \lambda_i(f_i - \koopman(g_i) f_i),
$$
so
$$
\int_\Omega f d\mu = \sum_{i=1}^\ell \lambda_i \int_\Omega(f_i -  \koopman(g_i) f_i)d\mu = \sum_{i=1}^\ell \lambda_i \left(\int_\Omega f_i d\mu - \int_\Omega f_i d\mu\right) = 0. 
$$
If $f \in \wCob$, for every $\epsilon > 0$ there exists $f_\epsilon \in \Cob$ such that $\|f - f_\epsilon\|_{L^1(\Omega, Y)} \leq \epsilon$. Then, appealing to \cite[\S II.2, Theorem 4]{diesteluhl},
$$
\left\|\int_\Omega f d\mu\right\|_Y  = \left\|\int_\Omega f- \int_\Omega f_\epsilon d\mu\right\|_Y \leq  \int_\Omega \|f - f_\epsilon\|_Y d\mu \leq \epsilon,
$$
and since $\epsilon$ was arbitrary, we conclude that
$$
\left\|\int_\Omega f d\mu\right\|_Y = 0, \quad \text{ so } \quad \int_\Omega f d\mu = 0.
$$
\end{proof}

If $(\Omega, \mathcal{B}, \mu)$ is a probability space, we say that $G$ \textbf{acts ergodically} if for every $A \in \mathcal{B}$ such that $g \cdot A = A$ for every $g \in G$, we have that $\mu(A) \in \{0,1\}$.

\begin{corollary}
\label{cor:pointwise-asymp}
Let $G$ be a countable discrete amenable group that acts ergodically on a probability space $(\Omega, \mathcal{B}, \mu)$ by measure-preserving transformations, $(Y,\|\cdot\|_Y)$ a reflexive Banach space, and $\koopman\colon G \to \mathrm{Isom}(L^p_\infty(\Omega, Y))$ the Koopman representation for $1 \leq p < \infty$. Then, if $(F_n)_n$ is a tempered F{\o}lner sequence in $G$, for each bounded and $G$-equivariant asymptotically additive set map $\varphi\colon \FSet \to L^p_\infty(\Omega, Y)$,
$$
\lim_{n \to \infty} \frac{\varphi(F_n)}{|F_n|}(\omega) = \int{f}d\mu    \quad   \mu(\omega)\text{-a.e.},
$$
where $f$ is any additive realization of $\varphi$.
\end{corollary}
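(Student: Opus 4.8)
The plan is to reduce the statement to Theorem \ref{thm:pointwise-asymp}, and then to use ergodicity together with Lemma \ref{lem:integralcoboundary} to identify the limit. Before anything else, I would record that the right-hand side is well defined: by Lemma \ref{lem:witness} any two additive realizations of $\varphi$ differ by an element of $\wCob$, and by Lemma \ref{lem:integralcoboundary} every weak coboundary has zero integral, so $\int_\Omega f\,d\mu$ does not depend on the chosen additive realization $f$. (Lemma \ref{lem:integralcoboundary} is stated for $L^1_\infty(\Omega,Y)$, but on a probability space $\|\cdot\|_{L^1_\infty(\Omega,Y)}\le\|\cdot\|_{L^p_\infty(\Omega,Y)}$, so $\wCob$ for $L^p_\infty(\Omega,Y)$ is contained in $\wCob$ for $L^1_\infty(\Omega,Y)$ and the lemma applies.)

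Next I would apply Theorem \ref{thm:pointwise-asymp} to the Koopman representation $\koopman$: its hypotheses hold because the action is measure preserving (and the space is in particular $\sigma$-finite), $Y$ is reflexive, $\koopman$ is isometric, and the condition relating the representation to the action holds with $\theta=\mathrm{id}_G$ and $B=1$ (Remark \ref{rem:pogorzelski6-8}). This yields $\overline{f}\in L^p_\infty(\Omega,Y)$ with $\koopman(g)\overline{f}=\overline{f}$ for all $g$ and
$$\lim_{n\to\infty}\left\|\frac{\varphi(F_n)}{|F_n|}(\omega)-\overline{f}(\omega)\right\|_Y=0\qquad\mu(\omega)\text{-a.e.}$$
Since $\koopman(g)\overline{f}=\overline{f}$ means $\overline{f}(g^{-1}\cdot\omega)=\overline{f}(\omega)$ $\mu$-a.e.\ for each $g$, and $\overline{f}$, being Bochner measurable, is essentially separably valued, ergodicity forces $\overline{f}$ to equal some constant $c\in Y$ almost everywhere: the preimages under $\overline{f}$ of balls centered at a countable dense subset of the essential range are $G$-invariant modulo null sets, hence of measure $0$ or $1$, and a routine argument then isolates a single value $c$. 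It remains to prove $c=\int_\Omega f\,d\mu$.

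For this, write $\frac{\varphi(F_n)}{|F_n|}=A_{F_n}f+\epsilon_n$ with $\|\epsilon_n\|_{L^p_\infty(\Omega,Y)}\to0$, which holds because $f$ is an additive realization and $(F_n)_n$ is F{\o}lner. On one hand, $\koopman$ being the Koopman representation of a measure-preserving action gives $\int_\Omega A_{F_n}f\,d\mu=\int_\Omega f\,d\mu$ for every $n$, while $\left\|\int_\Omega\epsilon_n\,d\mu\right\|_Y\le\|\epsilon_n\|_{L^1(\Omega,Y)}\le\|\epsilon_n\|_{L^p_\infty(\Omega,Y)}\to0$; hence $\int_\Omega\frac{\varphi(F_n)}{|F_n|}\,d\mu\to\int_\Omega f\,d\mu$. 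On the other hand, $\|A_{F_n}f(\omega)\|_Y\le\|f\|_{L^\infty(\Omega,Y)}$ for $\mu$-a.e.\ $\omega$ simultaneously over all $n$ (intersect the countably many relevant full-measure sets), and $\|\epsilon_n\|_{L^\infty(\Omega,Y)}\le\|\epsilon_n\|_{L^p_\infty(\Omega,Y)}$ is bounded, so $\left\|\frac{\varphi(F_n)}{|F_n|}(\omega)\right\|_Y$ is bounded uniformly in $n$ by an integrable constant; dominated convergence for Bochner integrals then gives $\int_\Omega\frac{\varphi(F_n)}{|F_n|}\,d\mu\to\int_\Omega\overline{f}\,d\mu=c\,\mu(\Omega)=c$. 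Comparing the two limits gives $c=\int_\Omega f\,d\mu$, which is the assertion.

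The main obstacle is precisely this last passage from almost everywhere convergence (Theorem \ref{thm:pointwise-asymp}) to convergence of integrals: it is where the $L^\infty$-component of the norm of $L^p_\infty(\Omega,Y)$ is indispensable, since it supplies a uniform---hence, on a probability space, integrable---dominating function. Without it, the limit need not equal $\int_\Omega f\,d\mu$, as the typewriter-type construction recorded in the remark after Theorem \ref{thm:pointwise-asymp} shows. The reduction of $\overline{f}$ to a constant via ergodicity is standard but must be carried out for separably valued Bochner-measurable functions rather than scalar ones.
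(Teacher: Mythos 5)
Your proof is correct and follows essentially the same route as the paper's: apply Theorem \ref{thm:pointwise-asymp}, use ergodicity to make the invariant limit $\overline{f}$ constant, and identify the constant via dominated convergence for Bochner integrals together with the measure-preserving property. The only differences are cosmetic---you additionally verify that $\int_\Omega f\,d\mu$ is independent of the additive realization via Lemmas \ref{lem:witness} and \ref{lem:integralcoboundary} (the paper makes this observation in a remark following the corollary), and you apply dominated convergence to $\varphi(F_n)/|F_n|$ rather than to $A_{F_n}f$, which is equivalent.
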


\begin{proof}
From the proof of Theorem \ref{thm:pointwise-asymp}, there exist $f,\overline{f} \in L^p_\infty(\Omega, Y)$ such that
$$
\lim_{n \to \infty}\frac{\varphi(F_n)}{|F_n|}(\omega) = \lim_{n \to \infty} A_{F_n}f(\omega) = \overline{f}(\omega)   \quad   \mu(\omega)\text{-a.e.}
$$
and $\koopman(g) \overline{f}=\overline{f}$ for all $g \in G$. Since $G$ acts ergodically, $\overline{f}$ is constant $\mu$-almost everywhere, so
$$
\overline{f}(\omega) = \int_\Omega{\overline{f}}d\mu   \quad   \mu(\omega)\text{-a.e.}
$$

Since, for every $n$,
$$
\|A_{F_n} f (\omega)\|_Y \leq \frac{1}{|F_n|} \sum_{g \in F_n} \|f(g \cdot \omega)\|_Y \leq \|f\|_{L^\infty(\Omega, Y)} \quad   \mu(\omega)\text{-a.e.}
$$
we can appeal to the Dominated Convergence theorem for Bochner integrals \cite[\S II.2, Theorem 3]{diesteluhl}, and obtain that
$$
\int_\Omega{\overline{f}}d\mu = \int_\Omega{\lim_{n \to \infty} A_{F_n}f}d\mu = \lim_{n \to \infty} \int_\Omega{A_{F_n}f}d\mu = \lim_{n \to \infty} \int_\Omega{f}d\mu = \int_\Omega{f}d\mu,
$$
where in the third equality we use that $G$ acts by measure-preserving transformations.
\end{proof}

\begin{remark}
Observe that, if $(\Omega, \mathcal{B}, \mu)$ is a probability space, then $1 \leq p \leq q < \infty$ implies that $L^q_\infty(\Omega, Y) \subseteq L^p_\infty(\Omega, Y)$. In particular, in light of Lemma \ref{lem:witness}, Lemma \ref{lem:integralcoboundary} is consistent with Corollary \ref{cor:pointwise-asymp} in the sense that the set of additive realizations $\witness$ is of the form $f + \wCob$. 
\end{remark}

\section{Almost additivity in the sequential case}
\label{sec:folner}

In many situations, it is common to work with families of set maps $\varphi$ that are partially defined. For instance, given a F{\o}lner sequence $(F_n)_n$, we may be interested in studying limits only along it, such as
$$
\limsup_{n \to \infty}\left\|\frac{\varphi(F_n)}{|F_n|}\right\|.
$$
Here we show how our formalism is a natural and more encompassing way of studying this kind of situations.

Let $\mathcal{S}$ be a $G$-invariant subset of $\FSet$ (i.e., $G \cdot \mathcal{S} \subseteq \mathcal{S}$) such that for every $(K,\delta) \in \mathcal{F}(G) \times \R_{> 0}$ there exists a $(K,\delta)$-invariant set $F \in \mathcal{S}$. If $(F_n)_n$ is a F{\o}lner sequence, the natural subset $\mathcal{S}$ to consider is the $G$-orbit of $(F_n)_n$, namely 
$$
\mathcal{S} = \{g \cdot F_n : g \in G,~n \in \N\}.
$$
Then, any $\varphi\colon \{F_n\}_n \to V$ is naturally extended to $\varphi\colon \mathcal{S} \to V$ to force $G$-equivariance, this is to say,
$$
\varphi(g \cdot F_n) := g \cdot \varphi(F_n) = \pi(g) \varphi(F_n).
$$
This suggest to define, for any $G$-invariant subset $\mathcal{S}$, the {\bf set of $\mathcal{S}$-maps} as
$$
\Maps(\mathcal{S}) := \{\varphi\colon \mathcal{S} \to V \mid \varphi \text{ is bounded and $G$-equivariant}\},
$$
where
$$
\vertiii{\varphi}_{\mathcal{S}} := \sup_{F \in \mathcal{S}}\left\|\frac{\varphi(F)}{|F|}\right\|,
$$
and a set map $\varphi$ is \emph{bounded} if $\vertiii{\varphi}_{\mathcal{S}} < \infty$. Similarly, we can extend the definitions of $\lim_{F \to G}$, $\limsup_{F \to G}$, etc., by restricting all of them to sets $F \in \mathcal{S}$. In particular, a set map $\varphi: \mathcal{S} \to V$ is \emph{asymptotically additive} if
$$
\inf_{v \in V} \limsup_{\substack{F \to G\\F \in \mathcal{S}}} \left\|\frac{\varphi(F)}{|F|} - A_Fv\right\| = 0.
$$
Similarly, in this context, when dealing with partitions $\Part$ of a set $F \in \mathcal{S}$, we will require that all elements $E \in \Part$ belong to $\mathcal{S}$ as well. Then, we say that a set map $\varphi\colon \mathcal{S} \to V$ is \emph{almost additive} if there exists $b \in \NMaps_\R$ such that
$$
\left\|\varphi(F)-\sum_{E \in \mathcal{P}} \varphi(E)\right\| \leq \sum_{E \in \mathcal{P}} b(E),
$$
for every $F \in \FSet$ and every monotone invariant partition $\Part \subseteq  \mathcal{S}$ of $F$. For example, if $G = \Z^d$ and $\mathcal{S}$ is the set of all $d$-dimensional boxes (or, equivalently, intervals for the case $d=1$), given $F \in \mathcal{S}$, we only consider partitions $\Part$ of $F$ such that every $E \in \Part$ is a $d$-dimensional box, where the monotonic invariance of $\Part$ is guaranteed.

\subsection{Almost additivity with constant error} \label{sec:aa-constant}

In the case $G=\Z$, it is common to deal with sequences $(f_n)_n$ of bounded continuous functions $f_n: X \to \R$, where $X$ is some topological space and $T: X \to X$ is a homeomorphism. If we consider $\mathcal{S} = \{[m,n) \cap \Z: m < n\}$, $V = \mathcal{C}_b(X)$, and $\koopman$ to be the Koopman representation $\koopman\colon \Z \to \mathcal{C}_b(X)$ given by $\koopman(-n)(f)(x) =  f(T^n x)$, we can see that this fits in our context and that our formalism generalizes it. 

In this particular case, there have also been studied (see \cite{barreira2,barreira1, iommi-yuki, mummert})  sequences $(f_n)_n$ that satisfy the following property: there exists $C > 0$ such that for every $m,n \in \N$,
\begin{equation}
\label{eqn2}
\left|f_{n+m}(x) -  f_n(x) - (f_{m} \circ T^n)(x)\right| \leq C   \quad   \text{ for every } x \in X.
\end{equation}
By iterating Equation \eqref{eqn2}, we have that for any integers $M < N$,
\begin{equation}
\label{eqn3}
\left|(f_{N-M} \circ T^M)(x) - \sum_{i=0}^{k-1}(f_{n_{i+1} - n_i} \circ T^{n_0 + \cdots + n_i})(x)\right| \leq kC \quad  \text{ for every } x \in X,
\end{equation}
where $k \geq 1$ and $(n_i)_{i=0}^k$ are integers such that $M = n_0 < \cdots < n_i < n_{i+1} < \cdots < n_k = N$. Since $(\mathcal{C}_b(X),\|\cdot\|_\infty,\ll)$ is a Banach lattice endowed with a pointwise order, we can identify Equation \eqref{eqn3} with
\begin{equation}
\label{eqn4}
\left|\varphi(F) - \sum_{E \in \Part} \varphi(E)\right| \ll \sum_{E \in \Part} C\mathbbm{1},
\end{equation}
where $F = [M,N) \cap \Z$ and $\Part = \{[n_i,n_{i+1}) \cap \Z\}_{i=0}^{k-1}$. Since the Koopman representation $\koopman$ is unital and $\vertG{C\mathbbm{1}} = \lim_{n \to \infty} \frac{C}{n} = 0$, we conclude that $(f_n)_n$ can be identified with a set map that is Riesz-almost additive of constant type. Therefore, since $\Z$ is residually finite, it satisfies all the weak forms of additivity discussed. Considering this, we will say that a sequence $(f_n)_n$ satisfying Equation \eqref{eqn2} is {\bf almost additive with constant error}.

\begin{example}
Let $X=\{0,1\}^\Z$ be the full $\Z$-shift and $T: X \to X$ the shift map. Given two positive matrices $A_0$ and $A_1$, $x \in \{0,1\}^\Z$, and $n \in \N$, we define
$$
f_n(x) = \log \|A_{x_0} \cdots A_{x_{n-1}}\|,
$$
where $\|\cdot\|$ is some sub-multiplicative matrix norm. In \cite[Lemma 2.1]{feng1}, it is proven that $(f_n)_n$ is almost additive with constant error, and hence Riesz-almost additive of constant type.
\end{example}

\begin{example}
Let $X=\{0,1\}^\Z$ be the full $\Z$-shift, $T: X \to X$ the shift map, and $\phi: X \to \R$ a continuous function. A \emph{weak Gibbs measure (for $\phi$)} is any Borel probability measure on $X$ such that there exists $P \in \R$ and a sequence of positive real numbers $(K_n)_n$ with $\frac{1}{n}\log K_n \to 0$ as $n \to \infty$ such that, for every $x \in X$ and $n \in \N$,
$$
K_n^{-1} \leq \frac{\mu([x_0 \cdots x_{n-1}])}
{\exp(-nP + S_n\phi(x))} \leq K_n,
$$
where $S_n \phi(x) = \sum_{i=0}^{n-1} \phi(T^ix)$ and $[x_0 \cdots x_{n-1}]$ denotes the cylinder set of the first $n$ coordinates induced by $x$. If we can choose $(K_n)_n$ to be constant, we say that $\mu$ is a \emph{Gibbs measure (for $\phi$)}.

Given a weak Gibbs measure $\mu$, consider the sequence of continuous functions $(f_n)_n$ defined by $f_n(x) = \log \mu([x_0 \cdots x_{n-1}])$. If $\mu$ is a Gibbs measure, then $(f_n)_n$ is almost additive with constant error. However, if $\mu$ is not a Gibbs measure, then $(f_n)_n$ is Riez-almost additive of constant type but it is not almost additive with constant error. In particular, this shows that all the weak forms of additivity discussed are strictly more general than almost additivity with constant error.
\end{example}

In \cite[Proposition 2.1]{zhao2011asymptotically}, it is proven that almost additive with constant error implies asymptotical additivity in the context of sequences of continuos functions. We stress that, in light of the previous discussion, Theorem \ref{thm_zzc} is a far reaching generalization of this result. 

\subsection{Beyond the constant error case}

A more general property that retains the simplicity of the one given in Equation \eqref{eqn2} is to ask a sequence $(f_n)_n$ to satisfy
$$
\left\|f_{n+m} -  f_n - f_{m} \circ T^n\right\|_\infty \leq C_{n,m}
$$
for some double-indexed sequence $(C_{n,m})_{n,m}$. Then, a natural question is whether $(f_n)_n$ is still guaranteed to be (Riesz-)almost additive. Two common choices for $C_{n,m}$, at least  in the realm of sequence of real numbers (e.g., see \cite{de1952some, furedi2020nearly} for a related discussion about \emph{nearly sub-additive sequences}), are $C_{n+m}$ and $C_n + C_m$, where $(C_n)_n$ is a sequence of positive real numbers such that $\lim_{n \to \infty} \frac{C_n}{n} = 0$. By Lemma \ref{lem:kernelsubad} and identifying sequences with real-valued set maps defined on intervals, without loss of generality, we can assume that $(C_n)_n$ is sub-additive and restrict ourselves to the more general case $C_n + C_m$. The following proposition partially handles this situation.

\begin{proposition}
\label{prop:erdos}
Let $X$ be a topological space and let $(f_n)_n$ be a sequence of bounded continuous real-valued functions in $\mathcal{C}_b(X)$ such that
\begin{equation}
\label{eqn6}
\left\|f_{n+m} -  f_n - f_{m} \circ T^n\right\|_\infty \leq C_n + C_m   \quad   \text{ for all } m,n \in \N,
\end{equation}
where $(C_n)_n$ satisfies that $0 \leq C_n \leq C_{n+1}$ and
$$
\sum_{n \geq 1}\frac{C_n}{n^2} < \infty.
$$
Then, $(f_n)_n$ is asymptotically additive.
\end{proposition}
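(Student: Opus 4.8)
The plan is to establish directly the defining inequality for asymptotic additivity. Working in the sequential framework of \S\ref{sec:folner}--\S\ref{sec:aa-constant}, with $V=\mathcal{C}_b(X)$, the Koopman representation $\koopman$, and $\mathcal{S}$ the set of integer intervals, identify $(f_n)_n$ with the $G$-equivariant set map $\varphi([a,b))=f_{b-a}\circ T^{a}$. It then suffices to produce, for every $\epsilon>0$, a function $v_\epsilon\in\mathcal{C}_b(X)$ with
\[
\limsup_{n\to\infty}\Bigl\|\tfrac{f_n}{n}-A_{[0,n)}v_\epsilon\Bigr\|_\infty\le\epsilon,\qquad A_{[0,n)}v=\tfrac1n\sum_{j=0}^{n-1}v\circ T^{j},
\]
which is the defining condition for asymptotic additivity in the sequential setting (see \S\ref{sec:folner}; cf.\ Corollary \ref{cor:cuneo}). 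We will take $v_\epsilon:=f_N/N$ with $N$ a large power of $2$. First note that, comparing dyadic blocks and using the monotonicity of $(C_n)_n$, the hypothesis $\sum_n C_n/n^2<\infty$ upgrades to $\sum_{j\ge0}C_{2^j}/2^j<\infty$; write $\varepsilon_k:=\sum_{j\ge k}C_{2^j}/2^j$, so that $\varepsilon_k\downarrow0$ and $C_n/n\to0$.

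The main lemma is a \emph{block estimate}: for $N=2^k$ and every $q\ge1$,
\[
\Bigl\|f_{qN}-\sum_{i=0}^{q-1}f_N\circ T^{iN}\Bigr\|_\infty\le 4\,qN\,\varepsilon_k.
\]
To prove it, rewrite \eqref{eqn6} in its translated ``bridging'' form $\bigl\|\varphi([a,b))-\varphi([a,c))-\varphi([c,b))\bigr\|_\infty\le C_{c-a}+C_{b-c}$ for $a<c<b$, split $[0,qN)$ into two sub-runs of $N$-blocks, and iterate. When $q=2^{\ell}$ the accumulated bridging errors telescope exactly to $qN\sum_{k\le j<k+\ell}C_{2^j}/2^j\le qN\varepsilon_k$; a short strong induction that peels off the largest dyadic sub-run of blocks then covers arbitrary $q$ at the cost of the absolute constant $4$. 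Taking already $N=1$ gives $\|f_n\|_\infty\le n(\|f_1\|_\infty+4\varepsilon_0)$, so $\vertsup{\varphi}<\infty$ and $\varphi$ genuinely lies in the framework.

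Next, applying \eqref{eqn6} to the two decompositions of $[0,n+r)$ gives $\|f_n-f_n\circ T^{r}\|_\infty\le 2\|f_r\|_\infty+2(C_r+C_n)$ for all $n,r$; averaging over $0\le r<N$ and dividing by $n$ shows that, for fixed $N$, $\|f_n/n-A_{[0,N)}(f_n/n)\|_\infty\to0$ as $n\to\infty$ (using $C_n/n\to0$). On the other hand $A_{[0,n)}(f_N/N)=\frac1{nN}\sum_{j=0}^{n-1}\varphi([j,j+N))$, and sorting these sliding windows by residue modulo $N$ turns each residue class into a run of disjoint consecutive $N$-blocks; applying the block estimate to each such run and discarding the $O(N)$ partial windows at the two ends (which contribute $O(N^2/n)$) yields $\|A_{[0,n)}(f_N/N)-A_{[0,N)}(f_n/n)\|_\infty\le 4\varepsilon_k+o_n(1)$. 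Combining the two bounds, $\limsup_{n\to\infty}\|f_n/n-A_{[0,n)}(f_N/N)\|_\infty\le4\varepsilon_k$; letting $k\to\infty$ (i.e.\ $N\to\infty$) gives $\inf_{v\in\mathcal{C}_b(X)}\limsup_{n\to\infty}\|f_n/n-A_{[0,n)}v\|_\infty=0$, so $\varphi$, and hence $(f_n)_n$, is asymptotically additive. By Proposition \ref{lem:asymp-almost} it is then also almost additive, and by Theorem \ref{thm_zzc} these notions coincide here since $\Z$ is residually finite.

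I expect the main obstacle to be the bookkeeping inside the two combinatorial estimates: making the bridging errors from \eqref{eqn6} collapse to an honest tail $\varepsilon_k$ of the convergent series $\sum_j C_{2^j}/2^j$ with constants independent of $q$, and carefully controlling the boundary terms that appear because $n$ need not be a multiple of $N$ and $q$ need not be a power of $2$. It is worth noting why one goes through asymptotic additivity rather than proving almost additivity head-on: iterating \eqref{eqn6} over an arbitrary interval partition into $p$ pieces produces an error bound containing a term of order $p\,C_n$, which is too crude to be rewritten as $\sum_{E\in\Part}b(E)$ with $b\in\NMaps_\R$, so the naive route fails and the indirect one is needed.
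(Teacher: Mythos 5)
Your proposal is correct and follows essentially the same route as the paper: both take $f_N/N$ (there, $f_m/m$) as the candidate additive realization, control the accumulated error of a balanced dyadic block decomposition by a tail of the convergent series $\sum_j C_{2^j}/2^j$ (there, of $\sum_k C_{5k}/k^2$), and average over the $N$ offsets to recover the full ergodic sum. The only organizational difference is that the paper packages the error bookkeeping into the de Bruijn--Erd\H{o}s majorants $f_n^{\pm}=f_n\pm\tilde{C}_n\mathbbm{1}$, which are genuinely sub-/super-additive for balanced splits, whereas you track the two-sided errors directly; your deferred combinatorial details (the absolute constant in the block estimate for general $q$, the discarded boundary windows) do check out.
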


\begin{proof}
It suffices to prove that for any $\epsilon > 0$, there exist $f \in \mathcal{C}_b(X)$ and $n_0 \in \N$ such that
$$
\frac{1}{n}\left\|f_n-S_n f\right\|_{\infty} \leq \epsilon \quad \text{for every $n \geq n_0$}.
$$
Given $n \in \N$, let
$$
\tilde{C}_n = 5 n\sum_{k \geq n}{\frac{C_{5k}}{k^2}}   \quad   \text{ and }  \quad f^+_n = f_n + \tilde{C}_n\mathbbm{1}.
$$
Notice that
\begin{equation}
\label{4-sub-ad}
f^+_{n+m} \ll f^+_n + f^+_m \circ T^n   \quad   \text{for all } m \leq n \leq 4m.
\end{equation}
Indeed, following \cite[Theorem 23]{de1952some}, we have that
$$
f^+_{n+m} - f^+_n - f^+_m \circ T^n \circ T^n = (f_{n+m} - f_n - f_m \circ T^n) + (\tilde{C}_{n+m} - \tilde{C}_n - \tilde{C}_m)\mathbbm{1}  \ll 0,
$$
where we have used that $f_{n+m} - f_n - f_m \circ T^n \ll (C_n + C_m)\mathbbm{1} \ll (C_{5n} + C_{5m})\mathbbm{1}$ and
\begin{align*}
\tilde{C}_{n+m} - \tilde{C}_n - \tilde{C}_m &   \leq - 5n\sum_{n \leq k < n+m}{\frac{C_{5k}}{k^2}} - 5m\sum_{m \leq k < n+m}{\frac{C_{5k}}{k^2}} \\
&   \leq - 5nC_{5n}\left(\frac{1}{n}-\frac{1}{n+m}\right) - 5mC_{5m}\left(\frac{1}{m}-\frac{1}{n+m}\right) \\
&   \leq -C_{5n} - C_{5m}, 
\end{align*}
considering that $m \leq n \leq 4m$ and $\sum_{r \leq k < s}\frac{1}{k^2} > \frac{1}{r} - \frac{1}{s}$ for integers $1 \leq r < s$. Similarly, the sequence $(f^-_n)_n$ given by $f^-_n = f_n - \tilde{C}_n\mathbbm{1}$ satisfies that
\begin{equation}
\label{4-sup-ad}
f^-_n + f^-_m \circ T^n \ll f^-_{n+m}    \quad   \text{for all } m \leq n \leq 4m.
\end{equation}

Observe that $\frac{\tilde{C}_n}{n} \to 0$ as $n \to \infty$. Fix a positive integer $m$ such that $\frac{\tilde{C}_n}{n} < \frac{\epsilon}{3}$ for every $n \geq m$, and let
$$
K = \max_{0 \leq j < 3m} \max\{\|f^+_j\|_\infty, \|f^-_j\|_\infty\}.
$$

Take $n_0 > 4m$ such that $\frac{5K}{n_0} < \frac{\epsilon}{3}$. Notice that $\frac{\tilde{C}_n}{n}< \frac{\epsilon}{3}$ for every $n \geq n_0$. For each $n \geq n_0$, write $n=ms+\ell$ for $0 \leq \ell < m$. Then, for $0 \leq j< m$, we have that $n = (m+j) + (s - 3)m + (2m + \ell - j)$ with $m \leq m+j < 2m$, $m < 2m+\ell-j < 3m$, and $s \geq 4$. Considering this, it is possible to write
\begin{align*}
n     = \left((m+j) + \left\lceil \frac{s-3}{2} \right\rceil m\right) + \left(\left\lfloor \frac{s-3}{2} \right\rfloor m + (2m + \ell - j)\right).
\end{align*}
Notice that the quotient between the two terms is bounded above by $4$ and below by $\frac{1}{4}$. In addition, the first term can be partitioned as 
$$
\left((m+j) + \left\lceil\frac{1}{2}\left\lceil \frac{s-3}{2} \right\rceil\right\rceil m\right) + \left(\left\lfloor\frac{1}{2}\left\lceil \frac{s-3}{2} \right\rceil\right\rfloor m\right),
$$
while the second, as
$$
\left(\left\lceil \frac{1}{2} \left\lfloor \frac{s-3}{2} \right\rfloor \right\rceil m\right) + \left(\left\lfloor\frac{1}{2}\left\lfloor \frac{s-3}{2} \right\rfloor \right\rfloor m + (2m + \ell - j)\right).
$$
Again, the quotient between two terms obtained from one is bounded above by $4$ and below by $\frac{1}{4}$. Iterating this procedure with each term and ensuring that the leftmost term has the form $(m+j) + am$, the rightmost term has the form $am + (2m + \ell - j)$, and all the other terms have the form $am$, for some $a \in \N$, it is possible to write
$$
n   = (m+j) + \underbrace{m + \cdots + m}_{(s-3) \text{ times}} + (2m + \ell - j)
$$
in such a way that in every step the quotient between every two terms obtained from a single one is bounded from above by $4$ and below by $\frac{1}{4}$. Therefore, due to Equation \eqref{4-sub-ad} and considering the decomposition of $n$ described above, we have that
\begin{align*}
f^+_n &   \ll f^+_{m+j} + f^+_m \circ T^{m+j} + f^+_m \circ T^{m} \circ T^{m+j} + \cdots \\
    &   \quad \cdots + f^+_m \circ T^{(s-4) m} \circ T^{m+j} + f^+_{2m+\ell-j}\circ T^{(s-3) m} \circ T^{m+j}.
\end{align*}
Next, summing $j$ from 0 to $m-1$, we have
$$
mf^+_n \ll \sum_{j=0}^{m-1} f^+_{m+j} + \sum_{i=0}^{(s-2) m-1} f^+_m \circ T^{m+i} + \sum_{j=0}^{m-1} f^+_{2m+\ell-j} \circ T^{(s-2) m + j}.
$$
Similarly, using Equation \eqref{4-sup-ad}, we have
$$
\sum_{j=0}^{m-1} f^-_{m+j} + \sum_{i=0}^{(s-2)m-1} f^-_m \circ T^{m+i} + \sum_{j=0}^{m-1} f^-_{2m+\ell-j} \circ T^{(s-2)m + j} \ll m f^-_n.
$$

Then,
$$
f^+_n \ll 5K\mathbbm{1}+\sum_{i=0}^{n-1} \frac{1}{m} f^+_m \circ T^i \quad \text{ and } \quad -5K\mathbbm{1}+\sum_{i=0}^{n-1} \frac{1}{m} f^-_m \circ T^i \ll f^-_n.
$$

Hence,
$$
\left(-5K - \frac{n}{m}\tilde{C}_m + \tilde{C}_n\right)\mathbbm{1} \ll f_n-S_n \frac{f_m}{m} \ll \left(5K + \frac{n}{m}\tilde{C}_m - \tilde{C}_n\right)\mathbbm{1}.
$$

Thus, if $f = \frac{f_m}{m}$, we have
$$
\frac{1}{n}\left\|f_n-S_n f\right\|_\infty  \leq \frac{5K}{n} + \frac{\tilde{C}_n}{n} + \frac{\tilde{C}_m}{m}  <   \frac{\epsilon}{3} + \frac{\epsilon}{3} + \frac{\epsilon}{3} = \epsilon,
$$
and the result follows.
\end{proof}

\begin{remark}
Proposition \ref{prop:erdos} combined with Proposition \ref{lem:asymp-almost} and Proposition \ref{prop:riesz-almost-constant} guarantee that every sequence $(f_n)_n$ that satisfies Equation \eqref{eqn6} is Riesz-almost additive (of constant type), and thus almost additive. Notice also that if $(C_n)_n$ is constant in Proposition \ref{prop:erdos}, the same conclusion holds, generalizing \cite[Proposition 2.1]{zhao2011asymptotically}.
\end{remark}

\begin{question}
If $(C_n)_n$ is a non-decreasing sequence of positive real numbers such that $\sum_{n \geq 1} \frac{C_n}{n^2} < \infty$, then $\lim_{n \to \infty} \frac{C_n}{n} = 0$. Is it possible to weaken the assumptions on $(C_n)_n$ in Proposition \ref{prop:erdos} to just $0 \leq C_n \leq C_{n+1}$ and $\lim_{n \to \infty} \frac{C_n}{n} = 0$? An analogue question in the setting of nearly sub-additive sequences of real numbers is answered negatively in \cite{furedi2020nearly}.
\end{question}

\subsection*{Funding}

R.B. was partially supported by ANID/FONDECYT Regular 1240508. G.I. was partially supported by ANID/FONDECYT Regular 1230100.

\bibliographystyle{abbrv}
\bibliography{references}

\end{document}